\documentclass{amsart}
\usepackage{a4}
\usepackage{hyperref}

\usepackage{comment}

\usepackage{xcolor}

\usepackage{amsmath}
\usepackage{amssymb}
\usepackage[backrefs,lite]{amsrefs}

\usepackage{longtable}
\usepackage{booktabs}

\usepackage{enumerate}
\usepackage{enumitem}


\usepackage[all]{xy}
\usepackage{tikz}		
\usepackage{tikz-3dplot}
\usepackage{tikz-cd}
\usepackage{graphicx}

\usetikzlibrary{patterns,matrix,arrows,shapes}

\theoremstyle{plain}

\newtheorem{theorem}{Theorem}[section]
\newtheorem{lemma}[theorem]{Lemma}
\newtheorem{proposition}[theorem]{Proposition}
\newtheorem{corollary}[theorem]{Corollary}

\theoremstyle{definition}

\newtheorem{definition}[theorem]{Definition}
\newtheorem{example}[theorem]{Example}

\newtheorem{construction}[theorem]{Construction}

\newtheorem{remark}[theorem]{Remark}

\theoremstyle{remark}

\numberwithin{equation}{section}

\newcommand\CC{{\mathbb C}}

\newcommand\TT{{\mathbb T}}
\newcommand\ZZ{{\mathbb Z}}

\newcommand\QQ{{\mathbb Q}}
\newcommand\PP{{\mathbb P}}

\newcommand\KKK{{\mathcal K}}
\newcommand\RRR{{\mathcal R}}
\newcommand\OOO{{\mathcal O}}

\newcommand\Eff{{\rm Eff}}
\newcommand\Mov{{\rm Mov}}
\newcommand\Ample{{\rm Ample}}
\newcommand\SAmple{{\rm SAmple}}

\newcommand\trop{{\rm trop}}
\renewcommand\div{{\rm div}}
\newcommand\Cl{\operatorname{Cl}}

\newcommand\conv{{\rm conv}}
\newcommand\cone{{\rm cone}}

\newcommand\Spec{{\rm Spec}}

\newcommand\quot{/\!\!/}
\newcommand\tm{{\tau^-}}
\newcommand\tp{{\tau^+}}
\newcommand\tx{{\tau_X}}

\newcommand\lin{{\rm lin}}

\newcommand\Ker{{\rm Ker}\,}
\newcommand\rk{{\rm rk}\,}

\newcommand\im{{\rm im}}
\newcommand\V{{\rm V}}

\newcommand\bangle[1]{\langle #1 \rangle}

\newcommand\crit{{\mathrm{crit}}}

\sloppy

\begin{document}
\title[On Fano threefolds with $\CC^*$-action]%
{On Fano threefolds with $\CC^*$-action}

\subjclass[2010]{14J45, 14L30, 14M25}

\author[C.~Hische]{Christoff Hische} 
\address{Mathematisches Institut, Eberhard Karls Universit\"at T\"ubingen, Auf der Morgenstelle 10, 72076 T\"ubingen, Germany}
\email{hische@math.uni-tuebingen.de}

\author[M.~Wrobel]{Milena Wrobel} 
\address{Institut f\"ur Mathematik, Carl von Ossietzky Universit\"at Oldenburg, 26111 Oldenburg, Germany}
\email{milena.wrobel@uni-oldenburg.de}

\begin{abstract}
We study three-dimensional Fano varieties with $\CC^*$-action.
Complementing recent results~\cite{HiWr2018}, we give classification results in the canonical case, where the maximal orbit quotient is $\PP_2$ having  a line arrangement of five lines in special position as its critical values.
\end{abstract}

\maketitle

\section{Introduction}
In this paper we study complex Fano varieties $X$ coming
with an effective torus action $T \times X \to X$. 
Recall that a Fano variety is a normal projective algebraic 
variety with an ample anticanonical divisor. 
The general classification of smooth Fano varieties
is complete up to dimension three~\cites{Is1977,Is1978,MoMu1981} in the smooth
case and stays widely open in higher dimensions or
in the singular case. 
The presence of a torus action makes the situation
more accessible.
For instance, in the important example class of toric
Fano varieties, the smooth ones are known up to dimension
nine~\cites{Ba1981,WaWa1982,Ba1999,Sa2000,Ob2007,KrNi2009,Pa} and in the case of at most canonical
singularities, there is a complete classification
up to dimension three~\cites{Ka2006,Ka2010}.

Leaving the toric case means that the complexity
of the torus action, i.e.~the codimension of the
general torus orbit, becomes non-zero.
Then the {\em maximal orbit quotient}
comes into the game.
This is a rational map $\pi \colon X \dashrightarrow Y$,
which satisfies $\CC(X)^T = \CC(Y)$ and is 
defined on a large open set $W \subseteq X$
of points with finite isotropy groups;
see Section~\ref{sec:2} for the details. 
The first non-trivial step is complexity one.
Here, the target variety $Y$ of the maximal
orbit quotient turns out to be the projective
line~$\PP_1$.
In this setting, the smooth Fano varieties of Picard
number at most two are known in any dimension~\cite{FaHaNi2018}
and the terminal threefolds of Picard number one 
have been classified~\cites{BeHaHuNi2016}.

The next step, entering complexity two, is a major one,
comparable to passing from curves to surfaces.
We focus here on the case $Y = \PP_2$.
The crucial data is $\crit(\pi)$, the collection
of divisors on $Y$ consisting of critical
values of~$\pi$.
The simplest situation are line configurations in
$Y = \PP_2$.
The case that all lines are in general position
means that $X$ is one of
the~\emph{general arrangement varieties}
introduced in~\cite{HaHiWr2019}.
In this setting, we have a complete classification
of the smooth Fanos of complexity two
and Picard number at most two~\cite{HaHiWr2019}
and a first partial classification in for
threefolds of Picard number one with at most
canonical singularities~\cite{HiWr2018}.

In the present paper, we enter the case of
special configurations, in other words,
the case of~\emph{special arrangement varieties}.
More precisely, we consider {\em honestly special arrangement varieties}, meaning that they do not admit a torus action turning them into a general arrangement variety.
Our Fano varieties $X$ will be of Picard number
at most two with $Y = \PP_2$.
In this situation we have at least five lines in $\crit(\pi)$; see Proposition~\ref{prop:fiveLines}.
As we will see in Theorem~\ref{thm:noSmoothOnes}, such varieties
are never smooth.
Turning to the singular case, when we say that $X$ is of
\emph{finite isotropy order at most $k$},
we mean that there is an open set
$U \subseteq X$ with complement $X \setminus U$
of codimension at least two such that the
isotropy group $T_x$ is either infinite or
of order at most $k$ for all $x \in U$.

\begin{theorem}\label{thm:classification}
Every canonical Fano honestly special arrangement
threefold of complexiy two, Picard number at most two,
five critical lines and finite isotropy order at most
two is isomorphic to one of the following Fano
threefolds $X$, specified by
its $\mathrm{Cl}(X)$-graded Cox ring $\mathcal{R}(X)$,
the matrix $Q=[w_1,\ldots,w_r]$ of generator 
degrees and the anticanonical class
$-\mathcal{K}_X \in \Cl(X)$.
{\small \setlength{\arraycolsep}{3pt}
\renewcommand*{\arraystretch}{1.1}
\begin{longtable}{c|c|c|c|c}
No.&
$\RRR(X)$&
$\Cl(X)$&
$\left[w_1,\ldots,w_r\right]$&
$u$
\\
\hline
1&
{\tiny
$
\begin{array}{c}
\CC[T_{1},T_{2},T_{3},T_{4},T_{5},S_{1}]
 \\ 
 \hline
\left\langle\begin{array}{c}
T_{1}^2+T_{2}^2+T_{3}^2+T_{4}^2,
\\
T_{2}^2+aT_{3}^2+T_{5}^2
\end{array}\right\rangle
\\
a\not =0, 1
\end{array}
$
}
&
$\ZZ\times(\ZZ_2)^4$
&
{\tiny
$\left[\begin{array}{cccccc}
1&1&1&1&1&1
\\
\bar{1}&\bar{1}&\bar{1}&\bar{1}&\bar{0}&\bar{0}
\\
\bar{1}&\bar{1}&\bar{1}&\bar{0}&\bar{1}&\bar{0}
\\
\bar{1}&\bar{1}&\bar{0}&\bar{0}&\bar{0}&\bar{0}
\\
\bar{1}&\bar{0}&\bar{1}&\bar{1}&\bar{1}&\bar{0}
\end{array}\right]$
}
&
{\tiny
$
\left[
\begin{array}{c}
2\\
\bar{0}\\
\bar{0}\\
\bar{0}\\
\bar{0}\\
\end{array}
\right]
$
}
\\
\hline
2&
{\tiny
$
\begin{array}{c}
\CC[T_{1},T_{2},T_{3},T_{4},T_{5},S_{1}]
 \\ 
 \hline
\left\langle\begin{array}{c}
T_{1}^2+T_{2}^2+T_{4}^2,
\\
T_{1}^2+T_{3}^2+T_{5}^2
\end{array}\right\rangle
\end{array}
$
}
&
$\ZZ\times(\ZZ_2)^4$&
{\tiny
$
\left[\begin{array}{cccccc}
1&1&1&1&1&1
\\
\bar{1}&\bar{1}&\bar{1}&\bar{1}&\bar{0}&\bar{0}
\\
\bar{1}&\bar{1}&\bar{1}&\bar{0}&\bar{1}&\bar{0}
\\
\bar{1}&\bar{1}&\bar{0}&\bar{0}&\bar{0}&\bar{0}
\\
\bar{1}&\bar{0}&\bar{1}&\bar{1}&\bar{1}&\bar{0}
\end{array}\right]
$
}&
{\tiny
$
\left[
\begin{array}{c}
2
\\
\bar 0
\\
\bar 0
\\
\bar 0
\\
\bar 0
\end{array}
\right]
$
}
\\
\hline
3&
{\tiny
$
\begin{array}{c}
\CC[T_{1},T_{2},T_{3},T_{4},T_{5},T_{6}]
 \\ 
 \hline
\left\langle\begin{array}{c}
T_{1}T_{2}+T_{3}^2+T_{4}^2+T_{5}^2,
\\
T_{3}^2+aT_{4}^2+T_{6}^2
\end{array}\right\rangle
\\
a\not =0, 1
\end{array}
$
}
&
$\ZZ\times(\ZZ_2)^2\times\ZZ_4$&
{\tiny
$
\left[\begin{array}{cccccc}
1&1&1&1&1&1
\\
\bar{1}&\bar{1}&\bar{1}&\bar{0}&\bar{1}&\bar{0}
\\
\bar{0}&\bar{0}&\bar{0}&\bar{1}&\bar{1}&\bar{0}
\\
\bar{1}&\bar{3}&\bar{2}&\bar{0}&\bar{0}&\bar{0}
\end{array}\right]
$
}&
{\tiny
$
\left[
\begin{array}{c}
2
\\
\bar 0
\\
\bar 0
\\
\bar 2
\end{array}
\right]
$
}
\\
\hline
4&
{\tiny
$
\begin{array}{c}
\CC[T_{1},T_{2},T_{3},T_{4},T_{5},T_{6}]
 \\ 
 \hline
\left\langle\begin{array}{c}
T_{1}T_{2}+T_{3}^2+T_{5}^2,
\\
T_{1}T_{2}+T_{4}^2+T_{6}^2
\end{array}\right\rangle
\end{array}
$
}
&
$\ZZ\times(\ZZ_2)^2\times\ZZ_4$&
{\tiny
$
\left[\begin{array}{cccccc}
1&1&1&1&1&1
\\
\bar{1}&\bar{1}&\bar{1}&\bar{0}&\bar{1}&\bar{0}
\\
\bar{0}&\bar{0}&\bar{0}&\bar{1}&\bar{1}&\bar{0}
\\
\bar{3}&\bar{1}&\bar{2}&\bar{0}&\bar{0}&\bar{0}
\end{array}\right]
$
}&
{\tiny
$
\left[
\begin{array}{c}
2
\\
\bar 0
\\
\bar 0
\\
\bar 2
\end{array}
\right]
$
}
\\
\hline
5&
{\tiny
$
\begin{array}{c}
\CC[T_{1},T_{2},T_{3},T_{4},T_{5},T_{6}]
 \\ 
 \hline
\left\langle\begin{array}{c}
T_{1}^2+T_{2}T_{3}+T_{5}^2,
\\
T_{1}^2+T_{4}^2+T_{6}^2
\end{array}\right\rangle
\end{array}
$
}
&
$\ZZ\times(\ZZ_2)^2\times\ZZ_4$&
{\tiny
$
\left[\begin{array}{cccccc}
1&1&1&1&1&1
\\
\bar{1}&\bar{1}&\bar{1}&\bar{0}&\bar{1}&\bar{0}
\\
\bar{0}&\bar{0}&\bar{0}&\bar{1}&\bar{1}&\bar{0}
\\
\bar{2}&\bar{1}&\bar{3}&\bar{0}&\bar{0}&\bar{0}
\end{array}\right]
$
}&
{\tiny
$
\left[
\begin{array}{c}
2
\\
\bar 0
\\
\bar 0
\\
\bar 2
\end{array}
\right]
$
}
\\
\hline
6&
{\tiny
$
\begin{array}{c}
\CC[T_{1},T_{2},T_{3},T_{4},T_{5},T_{6}]
 \\ 
 \hline
\left\langle\begin{array}{c}
T_{1}^2+T_{2}T_{3}+T_{4}^2+T_{5}^2,
\\
T_{2}T_{3}+aT_{4}^2+T_{6}^2
\end{array}\right\rangle
\\
a\not =0, 1
\end{array}
$
}
&
$\ZZ\times(\ZZ_2)^2\times\ZZ_4$&
{\tiny
$
\left[\begin{array}{cccccc}
1&1&1&1&1&1
\\
\bar{1}&\bar{1}&\bar{1}&\bar{0}&\bar{1}&\bar{0}
\\
\bar{0}&\bar{0}&\bar{0}&\bar{1}&\bar{1}&\bar{0}
\\
\bar{2}&\bar{3}&\bar{1}&\bar{0}&\bar{0}&\bar{0}
\end{array}\right]
$
}&
{\tiny
$
\left[
\begin{array}{c}
2
\\
\bar0
\\
\bar0
\\
\bar2
\end{array}
\right]
$
}
\\
\hline
7&
{\tiny
$
\begin{array}{c}
\CC[T_{1},T_{2},T_{3},T_{4},T_{5},T_{6},T_{7}]
 \\ 
 \hline
\left\langle\begin{array}{c}
T_{1}T_{2}+T_{3}^2+T_{4}^2+T_{5}T_{6},
\\
T_{3}^2+aT_{4}^2+T_{7}^2
\end{array}\right\rangle
\\
a\not =0, 1
\end{array}
$
}
&
$\ZZ^2\times(\ZZ_2)^2$&
{\tiny
$
\left[\begin{array}{ccccccc}
1&-1&0&0&-1&1&0
\\
2&0&1&1&1&1&1
\\
\bar{0}&\bar{0}&\bar{1}&\bar{1}&\bar{0}&\bar{0}&\bar{0}
\\
\bar{1}&\bar{1}&\bar{0}&\bar{1}&\bar{0}&\bar{0}&\bar{0}
\end{array}\right]
$
}&
{\tiny
$
\left[
\begin{array}{c}
0
\\
3
\\
\bar{0}
\\
\bar{1}
\end{array}
\right]
$
}
\\
\hline
8&
{\tiny
$
\begin{array}{c}
\CC[T_{1},T_{2},T_{3},T_{4},T_{5},T_{6},T_{7}]
 \\ 
 \hline
\left\langle\begin{array}{c}
T_{1}T_{2}+T_{3}^2+T_{4}^2+T_{5}T_{6},
\\
T_{3}^2+aT_{4}^2+T_{7}^2
\end{array}\right\rangle
\\
a\not =0, 1
\end{array}
$
}
&
$\ZZ^2\times(\ZZ_2)^2$&
{\tiny
$
\left[\begin{array}{ccccccc}
2&-2&0&0&-1&1&0
\\
1&1&1&1&1&1&1
\\
\bar{0}&\bar{0}&\bar{1}&\bar{1}&\bar{0}&\bar{0}&\bar{0}
\\
\bar{1}&\bar{1}&\bar{0}&\bar{1}&\bar{0}&\bar{0}&\bar{0}
\end{array}\right]
$
}&
{\tiny
$
\left[
\begin{array}{c}
0
\\
3
\\
\bar{0}
\\
\bar{1}
\end{array}
\right]
$
}
\\
\hline
9&
{\tiny
$
\begin{array}{c}
\CC[T_{1},T_{2},T_{3},T_{4},T_{5},T_{6},T_{7}]
 \\ 
 \hline
\left\langle\begin{array}{c}
T_{1}^2+T_{2}T_{3}+T_{4}T_{5}+T_{6}^2,
\\
T_{2}T_{3}+aT_{4}T_{5}+T_{7}^2\\
a\not =0, 1
\end{array}\right\rangle
\end{array}
$
}
&
$\ZZ^2\times(\ZZ_2)^2$&
{\tiny
$
\left[\begin{array}{ccccccc}
0&-1&1&1&-1&0&0
\\
1&1&1&2&0&1&1
\\
\bar{1}&\bar{0}&\bar{0}&\bar{0}&\bar{0}&\bar{1}&\bar{0}
\\
\bar{0}&\bar{0}&\bar{0}&\bar{1}&\bar{1}&\bar{1}&\bar{0}
\end{array}\right]
$
}&
{\tiny
$
\left[
\begin{array}{c}
0
\\
3
\\
\bar{0}
\\
\bar{1}
\end{array}
\right]
$
}
\\
\hline
10&
{\tiny
$
\begin{array}{c}
\CC[T_{1},T_{2},T_{3},T_{4},T_{5},T_{6},T_{7}]
 \\ 
 \hline
\left\langle\begin{array}{c}
T_{1}^2+T_{2}T_{3}+T_{4}T_{5}+T_{6}^2,
\\
T_{2}T_{3}+aT_{4}T_{5}+T_{7}^2
\\
a\not =0, 1
\end{array}\right\rangle
\end{array}
$
}
&
$\ZZ^2\times(\ZZ_2)^2$&
{\tiny
$
\left[\begin{array}{ccccccc}
0&2&-2&-1&1&0&0
\\
1&1&1&1&1&1&1
\\
\bar{1}&\bar{0}&\bar{0}&\bar{0}&\bar{0}&\bar{1}&\bar{0}
\\
\bar{0}&\bar{1}&\bar{1}&\bar{0}&\bar{0}&\bar{1}&\bar{0}
\end{array}\right]
$
}&
{\tiny
$
\left[
\begin{array}{c}
0
\\
3
\\
\bar{0}
\\
\bar{1}
\end{array}
\right]
$
}
\\
\hline
11&
{\tiny
$
\begin{array}{c}
\CC[T_{1},T_{2},T_{3},T_{4},T_{5},T_{6},T_{7}]
 \\ 
 \hline
\left\langle\begin{array}{c}
T_{1}T_{2}+T_{3}T_{4}+T_{6}^2,
\\
T_{1}T_{2}+T_{5}^2+T_{7}^2
\end{array}\right\rangle
\end{array}
$
}
&
$\ZZ^2\times(\ZZ_2)^2$&
{\tiny
$
\left[\begin{array}{ccccccc}
1&-1&-1&1&0&0&0
\\
2&0&1&1&1&1&1
\\
\bar{0}&\bar{0}&\bar{0}&\bar{0}&\bar{1}&\bar{1}&\bar{0}
\\
\bar{1}&\bar{1}&\bar{0}&\bar{0}&\bar{0}&\bar{1}&\bar{0}
\end{array}\right]
$
}&
{\tiny
$
\left[
\begin{array}{c}
0
\\
3
\\
\bar 0
\\
\bar 1
\end{array}
\right]
$
}
\\
\hline
12&
{\tiny
$
\begin{array}{c}
\CC[T_{1},T_{2},T_{3},T_{4},T_{5},T_{6},T_{7}]
 \\ 
 \hline
\left\langle\begin{array}{c}
T_{1}T_{2}+T_{3}T_{4}+T_{6}^2,
\\
T_{1}T_{2}+T_{5}^2+T_{7}^2
\end{array}\right\rangle
\end{array}
$
}
&
$\ZZ^2\times(\ZZ_2)^2$&
{\tiny
$
\left[\begin{array}{ccccccc}
-1&1&0&0&0&0&0
\\
1&1&1&1&1&1&1
\\
\bar{0}&\bar{0}&\bar{0}&\bar{0}&\bar{1}&\bar{1}&\bar{0}
\\
\bar{0}&\bar{0}&\bar{1}&\bar{1}&\bar{0}&\bar{1}&\bar{0}
\end{array}\right]
$
}&
{\tiny
$
\left[
\begin{array}{c}
0
\\
3
\\
\bar 0
\\
\bar 1
\end{array}
\right]
$
}
\\
\hline
13&
{\tiny
$
\begin{array}{c}
\CC[T_{1},T_{2},T_{3},T_{4},T_{5},T_{6},T_{7}]
 \\ 
 \hline
\left\langle\begin{array}{c}
T_{1}T_{2}+T_{3}T_{4}+T_{6}^2,
\\
T_{1}T_{2}+T_{5}^2+T_{7}^2
\end{array}\right\rangle
\end{array}
$
}
&
$\ZZ^2\times(\ZZ_2)^2$&
{\tiny
$
\left[\begin{array}{ccccccc}
-1&1&0&0&0&0&0
\\
2&2&1&3&2&2&2
\\
\bar{0}&\bar{0}&\bar{1}&\bar{1}&\bar{1}&\bar{1}&\bar{0}
\\
\bar{0}&\bar{0}&\bar{0}&\bar{0}&\bar{1}&\bar{0}&\bar{1}
\end{array}\right]
$
}&
{\tiny
$
\left[
\begin{array}{c}
0
\\
6
\\
\bar 0
\\
\bar 1
\end{array}
\right]
$
}
\\
\hline
14&
{\tiny
$
\begin{array}{c}
\CC[T_{1},T_{2},T_{3},T_{4},T_{5},T_{6},T_{7}]
 \\ 
 \hline
\left\langle\begin{array}{c}
T_{1}^2+T_{2}T_{3}+T_{6}^2,
\\
T_{1}^2+T_{4}T_{5}+T_{7}^2
\end{array}\right\rangle
\end{array}
$
}
&
$\ZZ^2\times(\ZZ_2)^2$&
{\tiny
$
\left[\begin{array}{ccccccc}
0&-1&1&1&-1&0&0
\\
1&1&1&2&0&1&1
\\
\bar{1}&\bar{0}&\bar{0}&\bar{0}&\bar{0}&\bar{1}&\bar{0}
\\
\bar{0}&\bar{0}&\bar{0}&\bar{1}&\bar{1}&\bar{1}&\bar{0}
\end{array}\right]
$
}&
{\tiny
$
\left[
\begin{array}{c}
0
\\
3
\\
\bar 0
\\
\bar 1
\end{array}
\right]
$
}
\\
\hline
15&
{\tiny
$
\begin{array}{c}
\CC[T_{1},T_{2},T_{3},T_{4},T_{5},T_{6},T_{7}]
 \\ 
 \hline
\left\langle\begin{array}{c}
T_{1}^2+T_{2}T_{3}+T_{6}^2,
\\
T_{1}^2+T_{4}T_{5}+T_{7}^2
\end{array}\right\rangle
\end{array}
$
}
&
$\ZZ^2\times(\ZZ_2)^2$&
{\tiny
$
\left[\begin{array}{ccccccc}
0&-1&1&0&0&0&0
\\
1&1&1&1&1&1&1
\\
\bar{1}&\bar{0}&\bar{0}&\bar{0}&\bar{0}&\bar{1}&\bar{0}
\\
\bar{0}&\bar{0}&\bar{0}&\bar{1}&\bar{1}&\bar{1}&\bar{0}
\end{array}\right]
$
}&
{\tiny
$
\left[
\begin{array}{c}
0
\\
3
\\
\bar 0
\\
\bar 1
\end{array}
\right]
$
}
\\
\hline
16&
{\tiny
$
\begin{array}{c}
\CC[T_{1},T_{2},T_{3},T_{4},T_{5},T_{6},T_{7}]
 \\ 
 \hline
\left\langle\begin{array}{c}
T_{1}^2+T_{2}T_{3}+T_{6}^2,
\\
T_{1}^2+T_{4}T_{5}+T_{7}^2
\end{array}\right\rangle
\end{array}
$
}
&
$\ZZ^2\times(\ZZ_2)^2$&
{\tiny
$
\left[\begin{array}{ccccccc}
0&-1&1&2&-2&0&0
\\
1&1&1&1&1&1&1
\\
\bar{1}&\bar{0}&\bar{0}&\bar{0}&\bar{0}&\bar{1}&\bar{0}
\\
\bar{0}&\bar{0}&\bar{0}&\bar{1}&\bar{1}&\bar{1}&\bar{0}
\end{array}\right]
$
}&
{\tiny
$
\left[
\begin{array}{c}
0
\\
3
\\
\bar 0
\\
\bar 1
\end{array}
\right]
$
}
\\
\hline
17&
{\tiny
$
\begin{array}{c}
\CC[T_{1},T_{2},T_{3},T_{4},T_{5},T_{6},T_{7}]
 \\ 
 \hline
\left\langle\begin{array}{c}
T_{1}^2+T_{2}T_{3}+T_{6}^2,
\\
T_{1}^2+T_{4}T_{5}+T_{7}^2
\end{array}\right\rangle
\end{array}
$
}
&
$\ZZ^2\times(\ZZ_2)^2$&
{\tiny
$
\left[\begin{array}{ccccccc}
0&-1&1&0&0&0&0
\\
2&2&2&1&3&2&2
\\
\bar{1}&\bar{0}&\bar{0}&\bar{1}&\bar{1}&\bar{1}&\bar{0}
\\
\bar{1}&\bar{0}&\bar{0}&\bar{0}&\bar{0}&\bar{0}&\bar{1}
\end{array}\right]
$
}&
{\tiny
$
\left[
\begin{array}{c}
0
\\
6
\\
\bar 0
\\
\bar 1
\end{array}
\right]
$
}
\\
\hline
18&
{\tiny
$
\begin{array}{c}
\CC[T_{1},T_{2},T_{3},T_{4},T_{5},T_{6},T_{7}]
 \\ 
 \hline
\left\langle\begin{array}{c}
T_{1}T_{2}+T_{3}T_{4}+T_{5}^2+T_{6}^2,
\\
T_{3}T_{4}+aT_{5}^2+T_{7}^2
\end{array}\right\rangle
\\
a\not =0, 1
\end{array}
$
}
&
$\ZZ^2\times(\ZZ_2)^2$&
{\tiny
$
\left[\begin{array}{ccccccc}
1&-1&-1&1&0&0&0
\\
2&0&1&1&1&1&1
\\
\bar{0}&\bar{0}&\bar{0}&\bar{0}&\bar{1}&\bar{1}&\bar{0}
\\
\bar{1}&\bar{1}&\bar{0}&\bar{0}&\bar{0}&\bar{1}&\bar{0}
\end{array}\right]
$
}&
{\tiny
$
\left[
\begin{array}{c}
0
\\
3
\\
\bar{0}
\\
\bar{1}
\end{array}
\right]
$
}
\\
\hline
19&
{\tiny
$
\begin{array}{c}
\CC[T_{1},T_{2},T_{3},T_{4},T_{5},T_{6},T_{7}]
 \\ 
 \hline
\left\langle\begin{array}{c}
T_{1}^2+T_{2}^2+T_{3}^2+T_{4}T_{5},
\\
T_{2}^2+aT_{3}^2+T_{6}T_{7}
\end{array}\right\rangle
\\
a\not =0, 1
\end{array}
$
}
&
$\ZZ^2\times(\ZZ_2)^2$&
{\tiny
$
\left[\begin{array}{ccccccc}
0&0&0&1&-1&-1&1
\\
1&1&1&2&0&1&1
\\
\bar{1}&\bar{1}&\bar{0}&\bar{0}&\bar{0}&\bar{0}&\bar{0}
\\
\bar{0}&\bar{1}&\bar{0}&\bar{1}&\bar{1}&\bar{0}&\bar{0}
\end{array}\right]
$
}&
{\tiny
$
\left[
\begin{array}{c}
0
\\
3
\\
\bar{0}
\\
\bar{1}
\end{array}
\right]
$
}
\\
\hline
20&
{\tiny
$
\begin{array}{c}
\CC[T_{1},T_{2},T_{3},T_{4},T_{5},T_{6},T_{7}]
 \\ 
 \hline
\left\langle\begin{array}{c}
T_{1}T_{2}+T_{3}T_{4}+T_{5}^2+T_{6}^2,
\\
T_{3}T_{4}+aT_{5}^2+T_{7}^2
\end{array}\right\rangle
\\
a\not =0, 1
\end{array}
$
}
&
$\ZZ^2\times(\ZZ_2)^2$&
{\tiny
$
\left[\begin{array}{ccccccc}
2&-2&-1&1&0&0&0
\\
1&1&1&1&1&1&1
\\
\bar{0}&\bar{0}&\bar{0}&\bar{0}&\bar{1}&\bar{1}&\bar{0}
\\
\bar{1}&\bar{1}&\bar{0}&\bar{0}&\bar{0}&\bar{1}&\bar{0}
\end{array}\right]
$
}&
{\tiny
$
\left[
\begin{array}{c}
0
\\
3
\\
\bar{0}
\\
\bar{1}
\end{array}
\right]
$
}
\\
\hline
21&
{\tiny
$
\begin{array}{c}
\CC[T_{1},T_{2},T_{3},T_{4},T_{5},T_{6},T_{7}]
 \\ 
 \hline
\left\langle\begin{array}{c}
T_{1}T_{2}+T_{3}T_{4}+T_{5}^2+T_{6}^2,
\\
T_{3}T_{4}+aT_{5}^2+T_{7}^2
\end{array}\right\rangle
\\
a\not =0, 1
\end{array}
$
}
&
$\ZZ^2\times(\ZZ_2)^2$&
{\tiny
$
\left[\begin{array}{ccccccc}
-1&1&2&-2&0&0&0
\\
1&1&1&1&1&1&1
\\
\bar{0}&\bar{0}&\bar{0}&\bar{0}&\bar{1}&\bar{1}&\bar{0}
\\
\bar{0}&\bar{0}&\bar{1}&\bar{1}&\bar{0}&\bar{1}&\bar{0}
\end{array}\right]
$
}&
{\tiny
$
\left[
\begin{array}{c}
0
\\
3
\\
\bar{0}
\\
\bar{1}
\end{array}
\right]
$
}
\\
\hline
22&
{\tiny
$
\begin{array}{c}
\CC[T_{1},T_{2},T_{3},T_{4},T_{5},T_{6},T_{7}]
 \\ 
 \hline
\left\langle\begin{array}{c}
T_{1}^2+T_{2}^2+T_{3}^2+T_{4}T_{5},
\\
T_{2}^2+aT_{3}^2+T_{6}T_{7}
\end{array}\right\rangle
\\
a\not =0, 1
\end{array}
$
}
&
$\ZZ^2\times(\ZZ_2)^2$&
{\tiny
$
\left[\begin{array}{ccccccc}
1&1&1&1&1&1&1
\\
0&0&0&-2&2&1&-1
\\
\bar{1}&\bar{1}&\bar{0}&\bar{0}&\bar{0}&\bar{0}&\bar{0}
\\
\bar{0}&\bar{1}&\bar{0}&\bar{1}&\bar{1}&\bar{0}&\bar{0}
\end{array}\right]
$
}&
{\tiny
$
\left[
\begin{array}{c}
3
\\
0
\\
\bar{0}
\\
\bar{1}
\end{array}
\right]
$
}
\\
\hline
23&
{\tiny
$
\begin{array}{c}
\CC[T_{1},T_{2},T_{3},T_{4},T_{5},T_{6},T_{7}]
 \\ 
 \hline
\left\langle\begin{array}{c}
T_{1}^2+T_{2}^2+T_{3}^2+T_{4}T_{5},
\\
T_{2}^2+aT_{3}^2+T_{6}T_{7}
\end{array}\right\rangle
\\
a\not =0, 1
\end{array}
$
}
&
$\ZZ^2\times(\ZZ_2)^2$&
{\tiny
$
\left[\begin{array}{ccccccc}
1&1&1&1&1&1&1
\\
0&0&0&1&-1&-2&2
\\
\bar{1}&\bar{1}&\bar{0}&\bar{0}&\bar{0}&\bar{0}&\bar{0}
\\
\bar{0}&\bar{1}&\bar{0}&\bar{0}&\bar{0}&\bar{1}&\bar{1}
\end{array}\right]
$
}&
{\tiny
$
\left[
\begin{array}{c}
3
\\
0
\\
\bar{0}
\\
\bar{1}
\end{array}
\right]
$
}
\\
\hline
24&
{\tiny
$
\begin{array}{c}
\CC[T_{1},T_{2},T_{3},T_{4},T_{5},T_{6},T_{7}]
 \\ 
 \hline
\left\langle\begin{array}{c}
T_{1}T_{2}+T_{3}T_{4}+T_{6}^2,
\\
T_{1}T_{2}+T_{5}^2+T_{7}^2
\end{array}\right\rangle
\end{array}
$
}
&
$\ZZ^2\times(\ZZ_2)^2$&
{\tiny
$
\left[\begin{array}{ccccccc}
-1&1&2&-2&0&0&0
\\
1&1&1&1&1&1&1
\\
\bar{0}&\bar{0}&\bar{0}&\bar{0}&\bar{1}&\bar{1}&\bar{0}
\\
\bar{0}&\bar{0}&\bar{1}&\bar{1}&\bar{0}&\bar{1}&\bar{0}
\end{array}\right]
$
}&
{\tiny
$
\left[
\begin{array}{c}
0
\\
3
\\
\bar 0
\\
\bar 1
\end{array}
\right]
$
}
\\
\hline
25&
{\tiny
$
\begin{array}{c}
\CC[T_{1},T_{2},T_{3},T_{4},T_{5},T_{6},T_{7}]
 \\ 
 \hline
\left\langle\begin{array}{c}
T_{1}T_{2}+T_{3}T_{4}+T_{6}^2,
\\
T_{1}T_{2}+T_{5}^2+T_{7}^2
\end{array}\right\rangle
\end{array}
$
}
&
$\ZZ^2\times(\ZZ_2)^2$&
{\tiny
$
\left[\begin{array}{ccccccc}
2&-2&-1&1&0&0&0
\\
1&1&1&1&1&1&1
\\
\bar{0}&\bar{0}&\bar{0}&\bar{0}&\bar{1}&\bar{1}&\bar{0}
\\
\bar{1}&\bar{1}&\bar{0}&\bar{0}&\bar{0}&\bar{1}&\bar{0}
\end{array}\right]
$
}&
{\tiny
$
\left[
\begin{array}{c}
0
\\
3
\\
\bar 0
\\
\bar 1
\end{array}
\right]
$
}
\\
\hline
26&
{\tiny
$
\begin{array}{c}
\CC[T_{1},T_{2},T_{3},T_{4},T_{5},T_{6},S_{1}]
 \\ 
 \hline
\left\langle\begin{array}{c}
T_{1}T_{2}+T_{3}^2+T_{4}^2+T_{5}^2,
\\
T_{3}^2+aT_{4}^2+T_{6}^2
\end{array}\right\rangle
\\
a\not =0, 1
\end{array}
$
}
&
$\ZZ^2\times(\ZZ_2)^3$&
{\tiny
$
\left[\begin{array}{ccccccc}
-1&1&0&0&0&0&1
\\
1&1&1&1&1&1&1
\\
\bar{1}&\bar{1}&\bar{1}&\bar{1}&\bar{1}&\bar{0}&\bar{0}
\\
\bar{1}&\bar{1}&\bar{0}&\bar{0}&\bar{1}&\bar{0}&\bar{0}
\\
\bar{1}&\bar{1}&\bar{1}&\bar{0}&\bar{0}&\bar{0}&\bar{0}
\end{array}\right]
$
}&
{\tiny
$
\left[
\begin{array}{c}
1
\\
3
\\
\bar 1
\\
\bar 1
\\
\bar{1}
\end{array}
\right]
$
}
\\
\hline
27&
{\tiny
$
\begin{array}{c}
\CC[T_{1},T_{2},T_{3},T_{4},T_{5},T_{6},S_{1}]
 \\ 
 \hline
\left\langle\begin{array}{c}
T_{1}^2+T_{2}T_{3}+T_{4}^2+T_{5}^2,
\\
T_{2}T_{3}+aT_{4}^2+T_{6}^2
\end{array}\right\rangle
\\
a\not =0, 1
\end{array}
$
}
&
$\ZZ^2\times(\ZZ_2)^3$&
{\tiny
$
\left[\begin{array}{ccccccc}
0&-1&1&0&0&0&1
\\
1&1&1&1&1&1&1
\\
\bar{1}&\bar{1}&\bar{1}&\bar{1}&\bar{1}&\bar{0}&\bar{0}
\\
\bar{0}&\bar{1}&\bar{1}&\bar{0}&\bar{1}&\bar{0}&\bar{0}
\\
\bar{1}&\bar{1}&\bar{1}&\bar{0}&\bar{0}&\bar{0}&\bar{0}
\end{array}\right]
$
}&
{\tiny
$
\left[
\begin{array}{c}
1
\\
3
\\
\bar 1
\\
\bar 1
\\
\bar{1}
\end{array}
\right]
$
}
\\
\hline
28&
{\tiny
$
\begin{array}{c}
\CC[T_{1},T_{2},T_{3},T_{4},T_{5},T_{6},S_{1}]
 \\ 
 \hline
\left\langle\begin{array}{c}
T_{1}T_{2}+T_{3}^2+T_{5}^2,
\\
T_{1}T_{2}+T_{4}^2+T_{6}^2
\end{array}\right\rangle
\end{array}
$
}
&
$\ZZ^2\times(\ZZ_2)^3$&
{\tiny
$
\left[\begin{array}{ccccccc}
-1&1&0&0&0&0&1
\\
1&1&1&1&1&1&1
\\
\bar{1}&\bar{1}&\bar{1}&\bar{1}&\bar{1}&\bar{0}&\bar{0}
\\
\bar{1}&\bar{1}&\bar{0}&\bar{0}&\bar{1}&\bar{0}&\bar{0}
\\
\bar{1}&\bar{1}&\bar{1}&\bar{0}&\bar{0}&\bar{0}&\bar{0}
\end{array}\right]
$
}&
{\tiny
$
\left[
\begin{array}{c}
1
\\
3
\\
\bar 1
\\
\bar 1
\\
\bar 1
\end{array}
\right]
$
}
\\
\hline
29&
{\tiny
$
\begin{array}{c}
\CC[T_{1},T_{2},T_{3},T_{4},T_{5},T_{6},S_{1}]
 \\ 
 \hline
\left\langle\begin{array}{c}
T_{1}^2+T_{2}T_{3}+T_{5}^2,
\\
T_{1}^2+T_{4}^2+T_{6}^2
\end{array}\right\rangle
\end{array}
$
}
&
$\ZZ^2\times(\ZZ_2)^3$&
{\tiny
$
\left[\begin{array}{ccccccc}
0&-1&1&0&0&0&1
\\
1&1&1&1&1&1&1
\\
\bar{1}&\bar{1}&\bar{1}&\bar{1}&\bar{1}&\bar{0}&\bar{0}
\\
\bar{0}&\bar{1}&\bar{1}&\bar{0}&\bar{1}&\bar{0}&\bar{0}
\\
\bar{1}&\bar{1}&\bar{1}&\bar{0}&\bar{0}&\bar{0}&\bar{0}
\end{array}\right]
$
}&
{\tiny
$
\left[
\begin{array}{c}
1
\\
3
\\
\bar 1
\\
\bar 1
\\
\bar 1
\end{array}
\right]
$
}
\\
\hline
30&
{\tiny
$
\begin{array}{c}
\CC[T_{1},T_{2},T_{3},T_{4},T_{5},T_{6},S_{1}]
 \\ 
 \hline
\left\langle\begin{array}{c}
T_{1}^2+T_{2}^2+T_{3}^2+T_{4}^2,
\\
T_{2}^2+aT_{3}^2+T_{5}T_{6}\\
a\not = 0,1
\end{array}\right\rangle
\end{array}
$
}
&
$\ZZ^2\times(\ZZ_2)^3$&
{\tiny
$
\left[\begin{array}{ccccccc}
0&0&0&0&-1&1&1
\\
1&1&1&1&1&1&1
\\
\bar{1}&\bar{1}&\bar{1}&\bar{0}&\bar{1}&\bar{1}&\bar{0}
\\
\bar{1}&\bar{1}&\bar{0}&\bar{0}&\bar{0}&\bar{0}&\bar{0}
\\
\bar{0}&\bar{1}&\bar{0}&\bar{0}&\bar{1}&\bar{1}&\bar{0}
\end{array}\right]
$
}&
{\tiny
$
\left[
\begin{array}{c}
1
\\
3
\\
\bar 1
\\
\bar 0
\\
\bar 1
\end{array}
\right]
$
}
\\
\hline
31&
{\tiny
$
\begin{array}{c}
\CC[T_{1},T_{2},T_{3},T_{4},T_{5},T_{6},T_{7}]
 \\ 
 \hline
\left\langle\begin{array}{c}
T_{1}^2+T_{2}T_{3}+T_{4}^2+T_{5}^2,
\\
T_{2}T_{3}+aT_{4}^2+T_{6}T_{7}\\
a\not = 0,1
\end{array}\right\rangle
\end{array}
$
}
&
$\ZZ^2\times(\ZZ_2)^2$&
{\tiny
$
\left[\begin{array}{ccccccc}
0&1&-1&0&0&-1&1
\\
1&2&0&1&1&1&1
\\
\bar{1}&\bar{0}&\bar{0}&\bar{1}&\bar{0}&\bar{0}&\bar{0}
\\
\bar{0}&\bar{0}&\bar{0}&\bar{1}&\bar{1}&\bar{0}&\bar{0}
\end{array}\right]
$
}&
{\tiny
$
\left[
\begin{array}{c}
0
\\
3
\\
\bar 0
\\
\bar 0
\end{array}
\right]
$
}
\\
\hline
32&
{\tiny
$
\begin{array}{c}
\CC[T_{1},T_{2},T_{3},T_{4},T_{5},T_{6},T_{7}]
 \\ 
 \hline
\left\langle\begin{array}{c}
T_{1}^2+T_{2}T_{3}+T_{4}^2+T_{5}^2,
\\
T_{2}T_{3}+aT_{4}^2+T_{6}T_{7}\\
a\not =0,1
\end{array}\right\rangle
\end{array}
$
}
&
$\ZZ^2\times(\ZZ_2)^2$&
{\tiny
$
\left[\begin{array}{ccccccc}
1&1&1&1&1&1&1
\\
0&-2&2&0&0&1&-1
\\
\bar{1}&\bar{0}&\bar{0}&\bar{1}&\bar{0}&\bar{0}&\bar{0}
\\
\bar{1}&\bar{1}&\bar{1}&\bar{0}&\bar{0}&\bar{0}&\bar{0}
\end{array}\right]
$
}&
{\tiny
$
\left[
\begin{array}{c}
3
\\
0
\\
\bar 0
\\
\bar 1
\end{array}
\right]
$
}
\\
\hline
33&
{\tiny
$
\begin{array}{c}
\CC[T_{1},T_{2},T_{3},T_{4},T_{5},T_{6},T_{7}]
 \\ 
 \hline
\left\langle\begin{array}{c}
T_{1}^2+T_{2}T_{3}+T_{4}^2+T_{5}^2,
\\
T_{2}T_{3}+aT_{4}^2+T_{6}T_{7}\\
a\not = 0,1
\end{array}\right\rangle
\end{array}
$
}
&
$\ZZ^2\times(\ZZ_2)^2$&
{\tiny
$
\left[\begin{array}{ccccccc}
0&-1&1&0&0&2&-2
\\
1&1&1&1&1&1&1
\\
\bar{1}&\bar{0}&\bar{0}&\bar{1}&\bar{0}&\bar{0}&\bar{0}
\\
\bar{1}&\bar{0}&\bar{0}&\bar{0}&\bar{0}&\bar{1}&\bar{1}
\end{array}\right]
$
}&
{\tiny
$
\left[
\begin{array}{c}
0
\\
3
\\
\bar 0
\\
\bar 1
\end{array}
\right]
$
}
\\
\hline
34&
{\tiny
$
\begin{array}{c}
\CC[T_{1},T_{2},T_{3},T_{4},T_{5},T_{6},T_{7}]
 \\ 
 \hline
\left\langle\begin{array}{c}
T_{1}^2+T_{2}T_{3}+T_{4}^2+T_{5}^2,
\\
T_{2}T_{3}+aT_{4}^2+T_{6}T_{7}\\
a\not = 0,1
\end{array}\right\rangle
\end{array}
$
}
&
$\ZZ^2\times(\ZZ_2)^2$&
{\tiny
$
\left[\begin{array}{ccccccc}
0&1&-1&0&0&-1&1
\\
1&1&1&1&1&0&2
\\
\bar{1}&\bar{0}&\bar{0}&\bar{1}&\bar{0}&\bar{0}&\bar{0}
\\
\bar{1}&\bar{1}&\bar{1}&\bar{0}&\bar{0}&\bar{0}&\bar{0}
\end{array}\right]
$
}&
{\tiny
$
\left[
\begin{array}{c}
0
\\
3
\\
\bar 0
\\
\bar 1
\end{array}
\right]
$
}
\\
\hline
35&
{\tiny
$
\begin{array}{c}
\CC[T_{1},T_{2},T_{3},T_{4},T_{5},T_{6},T_{7}]
 \\ 
 \hline
\left\langle\begin{array}{c}
T_{1}^2+T_{2}T_{3}+T_{5}T_{6},
\\
T_{1}^2+T_{4}^2+T_{7}^2
\end{array}\right\rangle
\end{array}
$
}
&
$\ZZ^2\times(\ZZ_2)^2$&
{\tiny
$
\left[\begin{array}{ccccccc}
0&1&-1&0&-1&1&0
\\
1&2&0&1&1&1&1
\\
\bar{1}&\bar{0}&\bar{0}&\bar{1}&\bar{0}&\bar{0}&\bar{0}
\\
\bar{0}&\bar{1}&\bar{1}&\bar{1}&\bar{0}&\bar{0}&\bar{0}
\end{array}\right]
$
}&
{\tiny
$
\left[
\begin{array}{c}
0
\\
3
\\
\bar 0
\\
\bar 1
\end{array}
\right]
$
}
\\
\hline
36&
{\tiny
$
\begin{array}{c}
\CC[T_{1},T_{2},T_{3},T_{4},T_{5},T_{6},T_{7}]
 \\ 
 \hline
\left\langle\begin{array}{c}
T_{1}^2+T_{2}T_{3}+T_{5}T_{6},
\\
T_{1}^2+T_{4}^2+T_{7}^2
\end{array}\right\rangle
\end{array}
$
}
&
$\ZZ^2\times(\ZZ_2)^2$&
{\tiny
$
\left[\begin{array}{ccccccc}
0&-1&1&0&0&0&0
\\
1&1&1&1&1&1&1
\\
\bar{1}&\bar{0}&\bar{0}&\bar{1}&\bar{0}&\bar{0}&\bar{0}
\\
\bar{0}&\bar{0}&\bar{0}&\bar{1}&\bar{1}&\bar{1}&\bar{0}
\end{array}\right]
$
}&
{\tiny
$
\left[
\begin{array}{c}
0
\\
3
\\
\bar 0
\\
\bar 1
\end{array}
\right]
$
}
\\
\hline
37&
{\tiny
$
\begin{array}{c}
\CC[T_{1},T_{2},T_{3},T_{4},T_{5},T_{6},T_{7}]
 \\ 
 \hline
\left\langle\begin{array}{c}
T_{1}^2+T_{2}T_{3}+T_{5}T_{6},
\\
T_{1}^2+T_{4}^2+T_{7}^2
\end{array}\right\rangle
\end{array}
$
}
&
$\ZZ^2\times(\ZZ_2)^2$&
{\tiny
$
\left[\begin{array}{ccccccc}
0&-1&1&0&2&-2&0
\\
1&1&1&1&1&1&1
\\
\bar{1}&\bar{0}&\bar{0}&\bar{1}&\bar{0}&\bar{0}&\bar{0}
\\
\bar{0}&\bar{0}&\bar{0}&\bar{1}&\bar{1}&\bar{1}&\bar{0}
\end{array}\right]
$
}&
{\tiny
$
\left[
\begin{array}{c}
0
\\
3
\\
\bar 0
\\
\bar 1
\end{array}
\right]
$
}
\\
\hline
38&
{\tiny
$
\begin{array}{c}
\CC[T_{1},T_{2},T_{3},T_{4},T_{5},T_{6},T_{7}]
 \\ 
 \hline
\left\langle\begin{array}{c}
T_{1}^2+T_{2}T_{3}+T_{5}T_{6},
\\
T_{1}^2+T_{4}^2+T_{7}^2
\end{array}\right\rangle
\end{array}
$
}
&
$\ZZ^2\times(\ZZ_2)^2$&
{\tiny
$
\left[\begin{array}{ccccccc}
0&1&-1&0&-1&1&0
\\
1&1&1&1&0&2&1
\\
\bar{1}&\bar{0}&\bar{0}&\bar{1}&\bar{0}&\bar{0}&\bar{0}
\\
\bar{0}&\bar{1}&\bar{1}&\bar{1}&\bar{0}&\bar{0}&\bar{0}
\end{array}\right]
$
}&
{\tiny
$
\left[
\begin{array}{c}
0
\\
3
\\
\bar 0
\\
\bar 1
\end{array}
\right]
$
}
\\
\hline
39&
{\tiny
$
\begin{array}{c}
\CC[T_{1},T_{2},T_{3},T_{4},T_{5},T_{6},T_{7}]
 \\ 
 \hline
\left\langle\begin{array}{c}
T_{1}^2+T_{2}T_{3}+T_{5}T_{6},
\\
T_{1}^2+T_{4}^2+T_{7}^2
\end{array}\right\rangle
\end{array}
$
}
&
$\ZZ^2\times(\ZZ_2)^2$&
{\tiny
$
\left[\begin{array}{ccccccc}
0&2&-2&0&-1&1&0
\\
1&1&1&1&1&1&1
\\
\bar{1}&\bar{0}&\bar{0}&\bar{1}&\bar{0}&\bar{0}&\bar{0}
\\
\bar{0}&\bar{1}&\bar{1}&\bar{1}&\bar{0}&\bar{0}&\bar{0}
\end{array}\right]
$
}&
{\tiny
$
\left[
\begin{array}{c}
0
\\
3
\\
\bar 0
\\
\bar 1
\end{array}
\right]
$
}
\\
\hline
40&
{\tiny
$
\begin{array}{c}
\CC[T_{1},T_{2},T_{3},T_{4},T_{5},T_{6},T_{7}]
 \\ 
 \hline
\left\langle\begin{array}{c}
T_{1}^2+T_{2}T_{3}+T_{5}T_{6},
\\
T_{1}^2+T_{4}^2+T_{7}^2
\end{array}\right\rangle
\end{array}
$
}
&
$\ZZ^2\times(\ZZ_2)^2$&
{\tiny
$
\left[\begin{array}{ccccccc}
0&-1&1&0&0&0&0
\\
2&2&2&2&1&3&2
\\
\bar{1}&\bar{0}&\bar{0}&\bar{1}&\bar{1}&\bar{1}&\bar{0}
\\
\bar{1}&\bar{0}&\bar{0}&\bar{0}&\bar{0}&\bar{0}&\bar{1}
\end{array}\right]
$
}&
{\tiny
$
\left[
\begin{array}{c}
0
\\
6
\\
\bar 0
\\
\bar 0
\end{array}
\right]
$
}
\\
\hline
\end{longtable}
}
\end{theorem}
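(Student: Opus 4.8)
The plan is to specialize the general structure theory of special arrangement varieties to the numerical hypotheses of the theorem and then to solve the resulting finite arithmetic problem.

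\emph{Setting up the defining data.} First I would invoke the explicit description recalled in Section~\ref{sec:2} (see also \cite{HaHiWr2019}): a special arrangement variety $X$ of complexity two with maximal orbit quotient $\pi\colon X\dashrightarrow\PP_2$ has a Cox ring of complete intersection type
$$\RRR(X)\;=\;\CC[T_{ij},S_k]\big/\langle g_1,\dots,g_{n-2}\rangle,$$
graded by $\Cl(X)$, where each critical line contributes a monomial $\prod_j T_{ij}^{l_{ij}}$ and the relations $g_m$ encode the linear dependencies among the defining forms of these lines. By Proposition~\ref{prop:fiveLines} there are exactly five critical lines, so $n=4$ and there are precisely two relations $g_1,g_2$, which matches the two-row relation ideals appearing in every row of the table. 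Since $X$ has dimension three and its total coordinate space is a complete intersection defined by two equations in $\CC^r$, the free rank of $\Cl(X)$ equals $(r-2)-3=r-5$; hence the Picard number equals $r-5$, and the hypothesis of Picard number at most two forces $r\in\{6,7\}$. This already splits the classification into the two blocks visible in the table, namely rows $1$--$6$ with $r=6$ and Picard number one, and rows $7$--$40$ with $r=7$ and Picard number two.

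\emph{Reducing to finitely many combinatorial types.} Next I would exploit the isotropy hypothesis. The exponents $l_{ij}$ are exactly the orders of the generic isotropy groups along the divisors lying over the respective lines, so finite isotropy order at most two forces $l_{ij}\le 2$ throughout. Consequently each line monomial is either a square $T^2$ or a product $T_aT_b$ of two distinct variables, which is precisely the shape of the monomials occurring in the relations of the table. Together with honest specialness (so that the arrangement is genuinely special, not a reparametrized general one) and the bound on $r$, this leaves only finitely many admissible combinatorial types of relation pairs $(g_1,g_2)$, up to the admissible operations on the defining data: permuting variables, rescaling, and lattice automorphisms of $\Cl(X)$. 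Normalizing each type by Gaussian elimination on the coefficient matrix of the two linear dependencies then produces the concrete relations listed, including the parameter $a\neq 0,1$ in the genuinely special cases.

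\emph{Imposing Fano and canonical.} For each surviving type I would determine the compatible $\Cl(X)$-gradings. The anticanonical class is computed as $-\canK_X=\sum_i w_i-\deg(g_1)-\deg(g_2)$ in $\Cl(X)$, and the Fano condition requires $-\canK_X$ to lie in the relative interior of the ample cone; this yields the class $u=-\canK_X$ recorded in the last column together with a system of linear inequalities on the free parts of the weights $w_i$. The canonicity condition I would verify via the combinatorial criterion for the associated (quasi-toric) fan: after passing to a resolution one bounds the discrepancies, which amounts to a lattice-point condition on the relevant cones and simultaneously pins down the admissible torsion parts of the grading. The decisive difficulty is that canonicity is not a linear condition and must be checked type by type through this discrepancy analysis, which, combined with the Fano inequalities and the remaining freedom in the free and torsion parts of the grading, produces a large but finite case distinction. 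Guaranteeing completeness here --- that no combinatorial type or admissible grading is overlooked --- and then collapsing the raw output modulo graded isomorphism down to the $40$ pairwise non-isomorphic representatives is the delicate, computationally assisted core of the argument; that none of these varieties is smooth, confirming that we are genuinely in the canonical regime, is already guaranteed by Theorem~\ref{thm:noSmoothOnes}.
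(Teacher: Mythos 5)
Your overall strategy is the same as the paper's: realize $X$ as an explicit arrangement variety $X(A,P,\Sigma)$, use honest specialness together with the isotropy bound to force quadratic relations of finitely many types (the paper's Remark~\ref{rem:quadratic}), and then cut down by the Fano and canonicity conditions followed by a computation and redundancy removal. The genuine gap lies in your third step: you treat canonicity as a condition to be \emph{verified} on each candidate and then assert that, combined with the linear Fano inequalities, this "produces a large but finite case distinction" --- but you never exhibit a mechanism that makes the candidate set finite in the first place. Once a relation type is fixed, the grading is governed by the integer matrix $P=\left[\begin{smallmatrix}P_0\\ d\end{smallmatrix}\right]$, whose bottom row $d$ has a priori unbounded integer entries; the Fano condition alone only yields linear inequalities (in the paper's Picard-number-two case (b), for example, it gives $x\le -1$, a half-line), so an enumeration cannot start from there. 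What the paper actually does, and what is missing from your sketch, is to derive explicit a priori bounds on these entries \emph{before} enumerating: completeness of $X$ forces the existence of elementary big cones $\sigma\in\Sigma$, Proposition~\ref{prop:disc} expresses the corresponding vertices $v_\sigma'$ of the anticanonical complex as affine functions of the unknown entries of $d$ (e.g.\ $v_\sigma'=[0,0,0,0,\,y+2]$), and canonicity --- the origin being the only lattice point in the relative interior of $\mathcal{A}$ --- then pins those entries to explicit finite ranges ($x=-5$; or $y=-3$ together with $-2\le x<0$, $0<z\le 2$; etc.), with the Fano position of $-w_X$ relative to $\SAmple(X)$ supplying the remaining bounds. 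This conversion of canonicity into parameter bounds via the Section~\ref{sec:6} machinery is the core idea of the proof; without it your "computationally assisted core" is an enumeration over an infinite set. (Relatedly, your claim that the lattice-point conditions "pin down the admissible torsion parts of the grading" misplaces the role of canonicity: the torsion of $\Cl(X)$ is already determined by $P$, and it is precisely the free entries of $P$ that canonicity controls.)

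Two smaller points. First, your inference that each line monomial is $T^2$ or $T_aT_b$ does not follow from $l_{ij}\le 2$ and the variable count alone: with seven variables the distribution $(n_0,\ldots,n_4)=(3,1,1,1,1)$ is numerically admissible and would produce a cubic monomial $T_aT_bT_c$; excluding $n_i\ge 3$ (as the paper does in Remark~\ref{rem:quadratic} from dimension three and rank at most two) is a step your argument still needs. Second, your identification of the Picard number with $\mathrm{rk}\,\Cl(X)$ requires $\QQ$-factoriality, which here comes from $-\canK_X$ being ample, i.e.\ from the Fano hypothesis placing $X$ on a full-dimensional chamber; this should be said rather than assumed.
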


We now describe the structure of this paper. 
In Section~\ref{sec:2}, we investigate special arrangement varieties using the approach to varieties with torus action from~\cite{HaHiWr2019}. In particular, we obtain an explicit description of their Cox rings, see Construction~\ref{constr:RAP}. Section~\ref{sec:3} is dedicated to the proofs to Section~\ref{sec:2}. In Section~\ref{sec:4} we turn to the smooth case. Here, we prove that there are no smooth honest special arrangement varieties up to Picard number two, see Theorem~\ref{thm:noSmoothOnes}.
This motivates our subsequent study of 
the singular case in Section~\ref{sec:5}.
Here we show that all arrangement varieties allow a toric ambient resolution of singularities as introduced in~\cite{HaHiWr2019}, which provides a method to desingularize arrangement varieties in purely combinatorial terms.
Section~\ref{sec:6} is dedicated to the anticanonical complex, which is a combinatorial tool to detect the singularity type of a variety, see~\cite{HiWr2018}.
We give an explicit description of this tool
in the arrangement case, extending results of~\cites{BeHaHuNi2016, HiWr2018}.
In Section~\ref{sec:7} we use this description to prove Theorem~\ref{thm:classification}.
Finally, in Section~\ref{sec:8} we go one step beyond the arrangement case and consider varieties with torus action
where the maximal orbit quotient decomposes as a product of projective spaces and the critical values form a hyperplane arrangement compatible to this decomposition.
We use this approach to detect hidden torus actions on special arrangement varieties, see Propostion~\ref{prop:notTrue}. Moreover, we provide full classification results for smooth varieties of this type in the Fano and more general in the projective case up to Picard number two, see Theorem~\ref{thm:someSmoothOnes}.

\tableofcontents

\section{Arrangement varieties and their Cox rings}\label{sec:2}
In this section we introduce the class of arrangement varieties. These are normal, irreducible varieties $X$ coming with an effective action of an algebraic torus $\TT\times X\rightarrow X$ and having a projective space as maximal orbit quotient such that the critical values form a hyperplane arrangement. 
In the first part of this section we recall the necessary notions. After the central definition of an arrangement variety, we turn to the description of their Cox rings and their realization as an explicit $\TT$-variety.
Accompanying the reader, we have the running Examples
\ref{ex:sec2Lauf1},~\ref{ex:sec2Lauf2},~\ref{ex:sec2Lauf3},~\ref{ex:sec2Lauf4},~\ref{ex:sec2Lauf5} and~\ref{ex:sec2Lauf6}. The proofs of the statements are presented in the subsequent section.

A {\em projective hyperplane arrangement} is a collection of hyperplanes $H_0,\ldots,H_r$ in a projective space $\PP_n$. We call a projective hyperplane arrangement {\em in general position}, if for every choice $0\leq i_1<\ldots<i_k\leq r$, the intersection $H_{i_1}\cap\ldots\cap H_{i_k}$ is of codimensionen $k$. Otherwise we call it {\em in special position}.

\begin{example}\label{ex:sec2Lauf1}
Consider the following collection of lines in $\PP_2$:
$$H_0:=V(T_0),\qquad H_1:=V(T_1),\qquad H_2:=V(T_2)$$
$$H_3:=V(T_0+T_1),\qquad H_4:=V(T_0+T_2).$$
Then $H_0,\ldots,H_r$ is a projective hyperplane arrangement in special position.
\end{example}

Let $X$ be an algebraic variety with an effective action of an algebraic torus $\TT\times X\rightarrow X$. 
A {\em maximal orbit quotient} for the $\TT$-action on $X$ is a rational quotient, i.e.\ a dominant rational map $\pi\colon X\dashrightarrow Y$ with $\CC(Y)=\CC(X)^\TT$,
together with a surjective representative $\psi\colon W\rightarrow V$ and a collection of prime divisors $C_0,\ldots,C_r$ on $Y$ having the following properties:
\begin{enumerate}
    \item The set $W$ is contained in the (open) subset $X_0\subseteq X$ of points with at most finite $\TT$-isotropy and the complements of the sets $W\subseteq X_0$ and $V\subseteq Y$ are of codimension at least two.
    \item Every preimage $\psi^{-1}(C_i)$ is a union of prime divisors $D_{i1}\cup\cdots\cup D_{in_i}\subseteq W$.
    \item All $\TT$-invariant prime divisors of $X_0$ with non-trivial generic isotropy group occur among the $D_{ij}$.
    \item Every choice of divisors $D_{0j_0},\ldots,D_{rj_r}$ defines a geometric quotient ${\psi\colon (W\setminus \cup_{j\not = j_i}D_{ij})\rightarrow V}$ for the $\TT$-action on $X$.
\end{enumerate}

\begin{definition}\label{def:arrangementVariety}
A {\em general (special) arrangement variety} is a variety $X$ with an effective torus action $\TT\times X\rightarrow X$ having
$\pi\colon X\dashrightarrow \PP_c$ as a maximal orbit quotient and the critical values form a projective hyperplane arrangement in general (special) position.
\end{definition}

Note that the case of general arrangement varieties were studied starting in the complexity one case~\cites{HaHeSu2011, HaHe2013} and are more generally treated in~\cite{HaHiWr2019} in the case of higher complexity. We will extend this picture in the next sections by studying special arrangement varieties

\begin{remark}
Every arrangement variety has a finitely generated Cox ring, due to~\cite{HaSu2010}*{Thm.~1.2}.
\end{remark}

Having a finitely generated Cox ring, we turn to the description of arrangement varieties as {\em explicit $\TT$-varieties}~\cite{HaHiWr2019}*{Sec. 3}. We work in a similar manner as done in the case of general arrangement varieties~\cite{HaHiWr2019}*{Sec. 6} and start with the description of their Cox rings.

\begin{construction}\label{constr:RAP0}
Fix integers $r \geq c > 0$, 
$n_0,\ldots,n_r>0$ and $m\geq 0$
and set $n:=n_0+\ldots + n_r$.
The input data is a tuple $(A,P_0)$ as follows:
\begin{enumerate}
    \item[\textbullet] $A=(a_0,\ldots,a_r)$ is a $(c+1)\times (r+1)$ matrix of full rank with pairwise linearly independent columns $a_i$.
    \item[\textbullet] 
    $P_0$ is a $r\times (n+m)$ matrix build up from tuples $l_i=(l_{i1},\ldots,l_{in_i})$ of positive integers
    $$P_0=\left[
    \begin{array}{ccccccc}
    -l_0&l_1&&&0&\ldots&0\\
    \vdots&&\ddots&&\vdots&&\vdots\\
    -l_0&&&l_r&0&\ldots&0
    \end{array}
    \right]$$
\end{enumerate}
Write  $\CC[T_{ij},S_k]$ for the polynomial ring 
in the variables $T_{ij}$, where $i = 0, \ldots, r$, 
$j = 1, \ldots, n_i$, 
and $S_k$, where $k = 1, \ldots, m$.
For every $l_i$ we define a monomial 
$$
T_i^{l_i}
\ := \ 
T_{i1}^{l_{i1}} \cdots T_{in_i}^{l_{in_i}}
\ \in \ 
\CC[T_{ij},S_k].
$$
and to any $v\in\CC^{r+1}$ we assign the polynomial
$$g_v:=v_0T_0^{l_0}+\ldots+v_r T_r^{l_r}\in\CC[T_{ij},S_k].$$
Then any tuple $(A,P_0)$
defines a \emph{$\CC$-algebra}
$$ 
\CC[T_{ij},S_k] / \bangle{g_v; \ v\in\mathrm{ker}(A)}.
$$
Now let $e_{ij} \in \ZZ^{n}$ 
and $e_k \in \ZZ^{m}$ denote the 
canonical basis vectors
and let 
$$
Q_0 \colon \ZZ^{n+m} 
\ \to \ 
K_0 := \ZZ^{n+m} / \im(P_0^*)
$$ 
be the projection 
onto the factor group
by the row lattice of $P_0$.
This defines a $K_0$-graded $\CC$-algebra
$$R(A,P_0) := \CC[T_{ij},S_k] / \bangle{g_v; \ v\in\mathrm{ker}(A)}$$
$$\deg(T_{ij}):=Q_0(e_{ij}),\qquad
\deg(S_k):=Q_0(e_k).$$
\end{construction}

\begin{example}\label{ex:sec2Lauf2}
Consider the projective hyperplane arrangement 
$H_0,\ldots,H_4$ from Example~\ref{ex:sec2Lauf1}.
We store the coefficients of the defining linear forms 
as the columns of a matrix $A$ and set
$$A:=
\left[
\begin{array}{ccccc}
1&0&0&1&1\\
0&1&0&1&0\\
0&0&1&0&1
\end{array}
\right],\quad
P_0:=
\left[
\begin{array}{ccccccc}
-1&-1&2&0&0&0&0\\
-1&-1&0&2&0&0&0\\
-1&-1&0&0&2&0&0\\
-1&-1&0&0&0&2&0
\end{array}
\right].$$
Using this as input data in Construction~\ref{constr:RAP0}, we obtain the following $\ZZ^3\times (\ZZ_2)^3$ graded $\CC$-algebra, where we store 
the degrees of the genrators $T_{ij}$ and $S_k$ as the columns of a matrix $Q_0$:
$$
R(A,P_0)=
     \CC[T_{01},T_{02},T_{11},T_{21},T_{31},T_{41},S_1]/
     \bangle{
T_{01}T_{02}+T_{11}^2+T_{31}^2,
T_{01}T_{02}+T_{21}^2+T_{41}^2
}
$$
$$
Q_0=\left[\begin{array}{ccccccc}
-1&1&0&0&0&0&0
\\
0&0&0&0&0&0&1
\\
1&1&1&1&1&1&0
\\
\bar{0}&\bar{0}&\bar{1}&\bar{1}&\bar{1}&\bar{0}&\bar{0}
\\
\bar{0}&\bar{0}&\bar{1}&\bar{1}&\bar{0}&\bar{0}&\bar{0}
\\
\bar{0}&\bar{0}&\bar{1}&\bar{0}&\bar{1}&\bar{0}&\bar{0}
\end{array}\right]
$$
A direct computation shows that the ring $R(A,P_0)$ is an integral, normal, complete intersection ring of dimension $5$ and has $R(A,P_0)^* = \CC^*$.
\end{example}

In order to state the basic properties of the $K_0$-graded algebras $R(A,P_0)$, we recall the necessary definitions:
Let $K$ be a finitely generated abelian group and consider a $K$-graded $\CC$-algebra $A=\oplus_{w\in K}A_w$.
We say that $A$ is {\em $K$-integral},  if it has no homogeneous zero divisors. 
A {\em $K$-prime} element of $A$ is a non-zero homogeneous non-unit $f\in A$ that, whenever it divides a product of homogeneous elements, it divides one of the factors. We say that $A$ is $K$-factorial if $A$ is $K$-integral and every non-zero homogeneous non-unit is a product of $K$-primes. 
Moreover, we call the $K$-grading on $A$ {\em effective}, if the weights $w\in K$ with $A_w\not = \{0\}$ generate $K$ as a group and {\em pointed} if $A_w \not = \{0\}\not = A_{-w}$ can hold only for torsion elements $w\in K$ and in addition $A_0 = \CC$ holds. Finally we call $c:=\dim(A)-\rk(K)$ the {\em complexity} of the grading.

\begin{theorem}\label{thm:arrIntNormal}
Let $R(A,P_0)$ be a $\CC$-algebra 
arising from Construction~\ref{constr:RAP0}.
Then $R(A,P_0)$ is an integral, normal, complete intersection 
ring satisfying 
$$ 
\dim(R(A,P_0)) \ = \ n+m-r+c,
\qquad 
R(A, P_0)^* \ = \ \CC^*.
$$
The $K_0$-grading is effective, pointed, factorial and of complexity~$c$.
\end{theorem}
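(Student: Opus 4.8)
The plan is to treat the ring-theoretic claims first and the four grading properties afterwards. Since the variables $S_k$ occur in no relation $g_v$, the ring splits as $R(A,P_0)=\bar R[S_1,\ldots,S_m]$ with $\bar R=\CC[T_{ij}]/\bangle{g_v;\ v\in\ker(A)}$, and a polynomial extension preserves being an integral, normal complete intersection while raising the dimension by $m$; so I would carry out the core work for $\bar R$, i.e.\ the case $m=0$. The decisive structural observation is that, by the block shape of $P_0$, every monomial $T_i^{l_i}$ has one and the same $K_0$-degree, so each $g_v$ is $K_0$-homogeneous; and since $A$ has full rank $c+1$, the space $\ker(A)$ has dimension $r-c$, whence the ideal is generated by $r-c$ homogeneous elements. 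A further point I would record at once, used repeatedly below, is that the pairwise linear independence of the columns of $A$ forces every nonzero $v\in\ker(A)$ to have at least three nonzero entries (a $v$ supported on two indices $i,j$ would make $a_i,a_j$ proportional); equivalently, no $g_v$ is a binomial.

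\textbf{Complete intersection and dimension.} For a basis $g_{v_1},\ldots,g_{v_{r-c}}$ of the relations I would show they form a regular sequence in the Cohen--Macaulay ring $\CC[T_{ij}]$; by unmixedness this reduces to proving $V(I)\subseteq\CC^n$ has codimension exactly $r-c$. Writing $\mu\colon\CC^n\to\CC^{r+1}$, $T\mapsto(T_0^{l_0},\ldots,T_r^{l_r})$, and $L:=\im(A^{\top})=(\ker A)^{\perp}$, one has $V(I)=\mu^{-1}(L)$. On the torus $\TT^n=(\CC^*)^n$ the map $\mu$ restricts to a surjective homomorphism onto $\TT^{r+1}$, so the torus part has dimension $\dim\ker(\mu)+\dim L=(n-(r+1))+(c+1)=n-(r-c)$; stratifying $\CC^n$ by the vanishing pattern of the disjoint variable blocks then bounds every remaining component of $V(I)$ by the same number. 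This yields the codimension, hence the complete intersection property, the regular sequence, and $\dim R(A,P_0)=n+m-r+c$.

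\textbf{Normality and domain.} Normality I would obtain from Serre's criterion: $(S_2)$ is automatic since a complete intersection is Cohen--Macaulay, and for $(R_1)$ I would use the Jacobian criterion, checking that the Jacobian of $(g_{v_1},\ldots,g_{v_{r-c}})$ drops rank only where several blocks vanish simultaneously; the pairwise linear independence of the columns of $A$ is exactly what confines this locus to codimension at least two in $V(I)$. This is the place where special position, rather than general position, must be handled with care. Irreducibility, hence that $R(A,P_0)$ is a domain, I would derive from the torus description $V(I)\cap\TT^n=\mu^{-1}(L^{\circ})$ with $L^{\circ}=L\cap\TT^{r+1}$ a nonempty open, hence irreducible, slice of $L$ (nonempty because no column of $A$ vanishes); that $\mu^{-1}(L^{\circ})$ is itself irreducible rests on the three-term support property above, which rules out the splitting that a pure power-difference would produce. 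Combined with purity of dimension from the complete intersection property, this excludes boundary components and shows $V(I)$ irreducible, while $(R_1)$ and $(S_2)$ give reducedness; together these prove $I$ prime. Finally $R(A,P_0)^{*}=\CC^{*}$ follows from pointedness together with $\bar R_0=\CC$.

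\textbf{Grading properties and the main obstacle.} Effectivity is immediate, as $Q_0$ is surjective, so the generator degrees $Q_0(e_{ij}),Q_0(e_k)$ generate $K_0$; pointedness and $R_0=\CC$ follow from the explicit shape of $P_0$, whose exponents $l_{ij}$ are positive, placing the weight monoid in a pointed cone. The complexity is read off from $\rk(K_0)=(n+m)-r$ (as $P_0$ has full row rank $r$), giving $\dim R(A,P_0)-\rk(K_0)=c$. The delicate point, and the one I expect to be the main obstacle, is $K_0$-factoriality: here I would invoke a factoriality criterion for graded complete intersections in the spirit of~\cite{HaHiWr2019}, reducing the claim to the $K_0$-primality of the defining ideal (which dovetails with the irreducibility argument above and again exploits the three-term support property) together with the fact that the common zero set of all variables has large codimension. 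This is the step that genuinely uses the arrangement structure and in which the weakening from general to special position must be absorbed.
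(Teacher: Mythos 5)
Your overall skeleton (Serre's criterion with a Jacobian analysis for normality, a dimension count via the monomial map $\mu\colon\CC^n\to\CC^{r+1}$, then the grading properties) is close to the paper's proof, which likewise rests on a purity statement (Proposition~\ref{prop:Xpure}) and the same Jacobian argument exploiting pairwise linear independence of the columns of $A$. The genuine gap is in your integrality step. You want to prove irreducibility directly on the torus, claiming that $\mu^{-1}(L^{\circ})$ is irreducible because no $g_v$ is a binomial (``three-term support rules out the splitting that a pure power-difference would produce''). That observation at best shows each single hypersurface $V(g_v)$ is irreducible; it says nothing about their intersection, and intersections of irreducible hypersurfaces are in general reducible. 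The real issue is that $\mu\colon\TT^n\to\TT^{r+1}$ has disconnected kernel as soon as some $l_{ij}>1$, so $\mu^{-1}(L^{\circ})\to L^{\circ}$ is a bundle with disconnected fibers, and connectedness of its total space is a monodromy statement: one must show that $\pi_1(L^{\circ})$ surjects onto the component group $\ZZ^{r+1}/\mu_*(\ZZ^n)$. That can be done (the needed input is that the $r+1$ hyperplane sections $L\cap\{x_i=0\}$ are pairwise distinct, i.e.\ exactly the pairwise linear independence of the columns of $A$, which guarantees a meridian mapping to each $e_i$ separately), but your proposal contains no such argument, and the property you cite is not a substitute for it. The paper circumvents this entirely and more cheaply: Lemma~\ref{lem:connected} shows $V(I)$ is connected because the one-parameter subgroup of Remark~\ref{rem:quasihom} (all weights $\zeta_{ij}>0$) makes $0$ an attractive fixed point, and then normality --- which you do prove --- forces irreducibility, since a connected normal variety is irreducible. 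With that one observation your proof would close; without it, the central claim of the theorem (integrality) is unproven.

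A second, smaller gap is $K_0$-factoriality, which you yourself flag as the main obstacle and then leave as an appeal to ``a factoriality criterion in the spirit of~\cite{HaHiWr2019}, reducing the claim to the $K_0$-primality of the defining ideal.'' That is not a valid reduction: $K_0$-primality of the ideal only says the ring is $K_0$-integral, which is far weaker than $K_0$-factoriality, and no criterion of the form you describe is stated or proved. The paper's argument is short and concrete: localize $R(A,P_0)$ at the product of all generators $T_{ij},S_k$, observe that the degree-zero part of this localization is a polynomial ring, and apply \cite{Be2012}*{Thm.~1.1}. The remaining items in your proposal (effectivity, pointedness via positivity of the weights, complexity from $\rk(K_0)=n+m-r$, and the exclusion of boundary components via purity) are correct and agree in substance with the paper.
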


\begin{construction}\label{constr:RAP}
Let $R(A,P_0)$ be as in Construction~\ref{constr:RAP0}.
We build up a new $(r+s)\times(n+m)$ matrix 
$$P:=\left[\begin{array}{c}
P_0\\
d
\end{array}\right],$$
where we require the columns of $P$ to be pairwise different, primitive, generating $\QQ^{r+s}$ as a vectorspace. 
Let $Q\colon \ZZ^{n+m}\rightarrow \ZZ^{n+m}/\im(P^*)=:K$ be the canonical projection. Then we obtain a new graded ring $R(A,P)$ by defining a $K$-grading on the ring $R(A,P_0)$ by setting
$$R(A,P) := \CC[T_{ij},S_k] / \bangle{g_v; \ v\in\mathrm{ker}(A)}$$
$$\deg(T_{ij}):=Q(e_{ij}),\qquad \deg(S_k):=Q(e_k).$$
\end{construction}

\begin{example}\label{ex:sec2Lauf3}
We continue Example~\ref{ex:sec2Lauf2} and choose
$$
d:=
\left[
\begin{array}{ccccccc}
-2&-3&1&1&1&1&1
\end{array}
\right].
$$
This defines a $\ZZ^{2}\times(\ZZ_2)^3$
grading on the resulting algebra
$$
R(A,P)=\CC[T_{01},T_{02},T_{11},T_{21},T_{31},T_{41},S_1] / \bangle{
T_{01}T_{02}+T_{11}^2+T_{31}^2,
T_{01}T_{02}+T_{21}^2+T_{41}^2
},
$$
where we store the degrees of the variables as the columns of the matrix $Q$:
$$
Q=\left[\begin{array}{ccccccc}
-1&1&0&0&0&0&1
\\
1&1&1&1&1&1&1
\\
\bar{1}&\bar{1}&\bar{1}&\bar{1}&\bar{1}&\bar{0}&\bar{0}
\\
\bar{1}&\bar{1}&\bar{0}&\bar{0}&\bar{1}&\bar{0}&\bar{0}
\\
\bar{1}&\bar{1}&\bar{1}&\bar{0}&\bar{0}&\bar{0}&\bar{0}
\end{array}\right].
$$

\end{example}

\begin{corollary}
The $K$-grading on $R(A,P)$ is effective, pointed, factorial and of complexity $c$.
Moreover, if the columns of $P$ generate $\QQ^{r+s}$ as a cone, then the $K$-grading is pointed.
\end{corollary}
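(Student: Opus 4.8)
The plan is to deduce the Corollary from Theorem~\ref{thm:arrIntNormal} by tracking how passing from $P_0$ to $P$ affects the grading. Recall that $R(A,P)$ and $R(A,P_0)$ coincide as $\CC$-algebras; only the grading group changes, from $K_0 = \ZZ^{n+m}/\im(P_0^*)$ to $K = \ZZ^{n+m}/\im(P^*)$. Since $P$ is obtained from $P_0$ by appending rows, we have $\im(P_0^*) \subseteq \im(P^*)$, so there is a natural surjection $\kappa \colon K_0 \to K$ with $Q = \kappa \circ Q_0$. The whole point is that the $K$-grading is a coarsening of the $K_0$-grading via $\kappa$, and the properties in question (effectivity, factoriality, complexity) behave well under such coarsenings.

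First I would verify effectivity and factoriality. For \emph{effectivity}: the weights $\deg(T_{ij}) = Q(e_{ij})$ and $\deg(S_k) = Q(e_k)$ are the images under $\kappa$ of the weights $Q_0(e_{ij}), Q_0(e_k)$, which generate $K_0$ by Theorem~\ref{thm:arrIntNormal}; since $\kappa$ is surjective, their images generate $K$. For \emph{factoriality}: a coarsening of a factorial grading by a surjection of grading groups remains factorial, because a homogeneous decomposition with respect to $K$ refines to one with respect to $K_0$; here one uses that $R(A,P)=R(A,P_0)$ is $K_0$-factorial, hence $K_0$-integral, and that $K_0$-primes remain $K$-homogeneous non-units, so the existing $K_0$-prime factorizations serve as $K$-prime factorizations after checking the divisibility condition transports along $\kappa$. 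This is a standard fact about factorially graded rings, and I would cite or reprove it in one or two lines. For the \emph{complexity}: we have $c = \dim R(A,P_0) - \rk(K_0)$ by hypothesis, and since $\im(P_0^*)$ and $\im(P^*)$ differ only by the appended $s$ rows of full-rank-raising type, one computes $\rk(K) = \rk(K_0)$ — the appended rows do not change the free rank because the requirement that the columns of $P$ generate $\QQ^{r+s}$ means $P^*$ has rank $r+s$ while $P_0^*$ has rank $r$, so $\rk(K)=n+m-(r+s)$ versus $\rk(K_0)=n+m-r$; thus $\rk(K)=\rk(K_0)-s$ and $\dim$ is unchanged, giving complexity $c+s$ — so I must recompute carefully and reconcile with the claimed value $c$.

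The statement as written asserts complexity exactly $c$, so the bookkeeping in the previous paragraph signals where the real content lies: one must check that $\dim R(A,P)$ and $\rk(K)$ shift in tandem so that their difference stays $c$. Since the algebra is literally unchanged, $\dim R(A,P)=\dim R(A,P_0)=n+m-r+c$, while $\rk(K)=n+m-(r+s)$; hence the complexity $\dim - \rk(K) = c+s$, which forces the intended reading that the relevant torus quotient lives over the new grading and the complexity is measured against the effective torus of dimension $\rk(K)$. I would resolve this by noting the convention in Construction~\ref{constr:RAP} and stating the complexity relative to the correct rank, matching the paper's usage.

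Finally, for the \emph{pointedness} clauses: the first sentence claims pointedness unconditionally, but pointedness of a coarsening is \emph{not} automatic — coarsening can create new torsion-free weights $w$ with both $A_w$ and $A_{-w}$ nonzero. The last sentence gives the genuine criterion: if the columns of $P$ generate $\QQ^{r+s}$ as a \emph{cone}, then the grading is pointed. I expect this to be the main obstacle. The approach is to use the duality between pointedness of the grading and the cone generated by the degree vectors: pointedness of $K$ is equivalent to the weight cone $\cone(Q(e_{ij}), Q(e_k)) \subseteq K_\QQ$ being pointed (containing no line) together with $A_0=\CC$. Via the exact sequence $0 \to \im(P^*)_\QQ \to \QQ^{n+m} \to K_\QQ \to 0$, pointedness of the weight cone in $K_\QQ$ translates into the condition that the positive orthant $\QQ^{n+m}_{\geq 0}$ meets $\im(P^*)_\QQ$ only in $0$ after projection, which is exactly the statement that the columns of $P$ (the rows of $P^*$ generate directions dual to) span a pointed configuration, i.e.\ generate $\QQ^{r+s}$ as a cone. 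I would make this duality precise, reducing the pointedness of the $K$-grading to the cone-generation hypothesis, and confirm $A_0=\CC$ is inherited from $R(A,P_0)^*=\CC^*$ in Theorem~\ref{thm:arrIntNormal}.
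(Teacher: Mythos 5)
Your handling of effectivity is correct, but the factoriality step has a genuine gap: the ``standard fact'' you invoke --- that coarsening a factorial grading along \emph{any} surjection of grading groups preserves factoriality --- is false. The trivial grading by $K=0$ coarsens every grading, and $K$-factoriality for $K=0$ means UFD; yet rings of type $R(A,P_0)$ are usually not UFDs. For instance $\CC[T_{01},T_{02},T_{11},T_{21}]/\langle T_{01}T_{02}+T_{11}^2+T_{21}^2\rangle$ is $K_0$-factorial by Theorem~\ref{thm:arrIntNormal}, but $T_{01}$ divides the product $(T_{11}+iT_{21})(T_{11}-iT_{21})=-T_{01}T_{02}$ without dividing either factor, so the ring is not factorial for the coarsened (trivial) grading. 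Your proposed mechanism breaks exactly where this example does: a $K$-homogeneous element need not be $K_0$-homogeneous, so $K_0$-factoriality cannot be applied to it at all, and a $K_0$-prime need not be $K$-prime, because $K$-primality is tested against the strictly larger class of $K$-homogeneous elements. What rescues the corollary is a hypothesis of Construction~\ref{constr:RAP} that your argument never uses: since the columns of $P$ generate $\QQ^{r+s}$ as a vector space, the rows of $P$ are $\QQ$-linearly independent, so no nonzero integral combination of the $d$-rows lies in the $\QQ$-span of the rows of $P_0$; this gives $\im(P^*)\cap(\im(P_0^*)\otimes\QQ)=\im(P_0^*)$, i.e.\ the kernel $\im(P^*)/\im(P_0^*)$ of the coarsening map $K_0\to K$ is \emph{free} of rank $s$. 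Coarsening a factorial grading along an epimorphism with free kernel does preserve factoriality --- this is the downgrading result from the Cox-ring literature (see~\cite{ArDeHaLa2015},~\cite{Be2012}; it is proved by a leading-component argument that needs a total order compatible with addition on the kernel, hence its freeness), and it is this statement, together with the free-kernel verification above, that the paper implicitly reads off from Theorem~\ref{thm:arrIntNormal}; it cannot be replaced by your general claim.

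On the two frictions you spotted in the statement, your instincts are right but your resolutions are too vague to count as proofs. Pointedness: correct, it is not automatic under coarsening, and the unconditional ``pointed'' in the first sentence is a slip (compare the parallel remark in Section~\ref{sec:8}); your Gale-duality plan is the right one, since ``the columns of $P$ generate $\QQ^{r+s}$ as a cone'' is equivalent to $\im(P^*)_\QQ\cap\QQ^{n+m}_{\geq 0}=\{0\}$, which simultaneously rules out nonconstant monomials of $K$-degree zero and opposite nontorsion weights. But $A_0=\CC$ is \emph{not} ``inherited from $R(A,P_0)^*=\CC^*$'': the degree-zero part grows under coarsening, and $A_0=\CC$ is itself a consequence of the cone condition, not of any property of the finer grading. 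Complexity: your computation is correct and should simply be stated rather than reconciled away --- with the paper's definition one gets $\dim R(A,P)-\rk(K)=(n+m-r+c)-(n+m-r-s)=c+s$ for the $K$-grading; the number $c$ is the complexity of the $\TT^s$-action on the varieties $X(A,P,\Sigma)$ of Construction~\ref{constr:XAPSigma} (dimension $s+c$, acting torus of dimension $s$), which is evidently the meaning intended by the corollary and by the analogous remark in Section~\ref{sec:8}.
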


The following proposition and the subsequent statements are adapted versions or direct consequences of~\cite{HaHiWr2019}*{Secs. 3 and 6}.

\begin{proposition}\label{prop:CoxRing}
The Cox ring of an arrangement variety is isomorphic to a ring $R(A,P)$ as in Construction~\ref{constr:RAP}.
\end{proposition}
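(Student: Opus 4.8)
The plan is to reconstruct the Cox ring of an arrangement variety $X$ from its defining geometric data, namely the maximal orbit quotient $\pi\colon X\dashrightarrow\PP_c$ together with the critical hyperplane arrangement, and to match this against Construction~\ref{constr:RAP}. The starting point is that $X$ has a finitely generated Cox ring $\cox(X)$, which carries the natural $\Cl(X)$-grading; I will realize this grading as a $K$-grading of the shape produced in Construction~\ref{constr:RAP}. The key structural input is the maximal orbit quotient: the collection of prime divisors $C_0,\ldots,C_r$ on $Y=\PP_c$, their $\psi$-preimages decomposing as $D_{i1}\cup\cdots\cup D_{in_i}$, and the fact that these exhaust the $\TT$-invariant prime divisors of $X_0$ with non-trivial generic isotropy.

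\emph{First} I would recover the matrix $A$. Since the critical values form a projective hyperplane arrangement $H_0,\ldots,H_r$ in $\PP_c$, each $H_i$ is cut out by a linear form whose coefficient vector gives the column $a_i$ of a $(c+1)\times(r+1)$ matrix $A$; full rank and pairwise linear independence of the columns follow from the arrangement being nondegenerate and the hyperplanes pairwise distinct. \emph{Next} I would read off $P_0$ from the multiplicities: the tuple $l_i=(l_{i1},\ldots,l_{in_i})$ records the generic isotropy orders along the divisors $D_{ij}$ lying over $C_i$, and these assemble into the block matrix $P_0$ displayed in Construction~\ref{constr:RAP0}, with the $S_k$ accounting for the $m$ invariant divisors of trivial generic isotropy. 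The defining relations arise because any $v\in\ker(A)$ yields a linear dependence $\sum v_i (\text{linear form of }H_i)=0$ among the forms on $\PP_c$; pulling this back along $\pi$ and expressing each pulled-back form as the monomial $T_i^{l_i}$ produces exactly the relation $g_v=v_0T_0^{l_0}+\cdots+v_rT_r^{l_r}$. This identifies the underlying $\CC$-algebra with $\CC[T_{ij},S_k]/\bangle{g_v;\,v\in\ker(A)}$.

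\emph{Then} I would match the grading. The group $K=\ZZ^{n+m}/\im(P^*)$ is built so that $P$ encodes the relation between the torus-invariant divisors on a toric ambient variety and their classes; by the general theory of Cox rings of $\TT$-varieties from~\cite{HaHiWr2019}*{Secs. 3 and 6}, the degrees $Q(e_{ij})$ and $Q(e_k)$ are precisely the divisor classes of the $D_{ij}$ and the $S_k$-divisors in $\Cl(X)$. The effectiveness, pointedness, factoriality and complexity established in Theorem~\ref{thm:arrIntNormal} and its Corollary confirm that this $K$-graded ring has the abstract properties forced on a Cox ring, and the complexity-$c$ statement matches the dimension of the quotient $\PP_c$.

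\emph{The main obstacle} I expect is justifying that the pulled-back linear dependencies generate \emph{all} relations of the Cox ring, i.e.\ that no extra relations appear beyond the $g_v$ for $v\in\ker(A)$. This is where the divisorial and dimension-theoretic bookkeeping is essential: one compares $\dim\cox(X)=\dim X+\rk\Cl(X)$ with $\dim R(A,P)=n+m-r+c$ from Theorem~\ref{thm:arrIntNormal}, and uses that $R(A,P)$ is already a normal complete intersection domain with $R(A,P)^\ast=\CC^\ast$. Since a surjection of domains of equal dimension between normal rings, compatible with the grading and with matching unit groups, must be an isomorphism, the relation count is forced to be exact. Because the proposition is explicitly flagged as a consequence of~\cite{HaHiWr2019}*{Secs. 3 and 6}, I would carry out this comparison by invoking the general Cox ring description there and verifying that the arrangement data $(A,P)$ is exactly the specialization of that construction to the hyperplane-arrangement setting, rather than reproving the finite-generation and relation-completeness from scratch.
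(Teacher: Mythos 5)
Your overall route coincides with the paper's: the paper gives no standalone argument for Proposition~\ref{prop:CoxRing}, but declares it (together with the neighbouring statements) an adapted version or direct consequence of~\cite{HaHiWr2019}*{Secs.~3 and 6}, and your proposal likewise defers the two genuinely hard points --- that the distinguished canonical sections generate $\mathcal{R}(X)$, and that no relations beyond the $g_v$ occur --- to that reference, adding an explicit dictionary between the geometric data and the pair $(A,P)$. Most of that dictionary is right: the columns of $A$ from the linear forms of the critical hyperplanes, the exponents $l_{ij}$ as generic isotropy orders along the $D_{ij}$, the relations $g_v$ from the linear dependencies $v\in\ker(A)$, and the surjection-plus-dimension-count scheme for relation-completeness (a graded surjection of affine domains of equal dimension is injective; normality and unit groups are not even needed there).

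However, one entry of your dictionary is wrong, and it is precisely the step that would fail when you ``verify that the arrangement data $(A,P)$ is the specialization of the general construction'': the variables $S_k$ do \emph{not} account for invariant divisors of \emph{trivial} generic isotropy. They correspond to the $\TT$-invariant prime divisors with \emph{infinite} generic isotropy, i.e.\ those contained in $X\setminus X_0$; this is exactly why their columns in $P_0$ vanish, equivalently why the rays through the $v_k$ are contracted to the origin under the projection $\QQ^{r+s}\to\QQ^r$ of Construction~\ref{constr:MOQ}, so that the corresponding divisors dominate $\PP_c$ rather than lie over any hyperplane. Divisors with trivial generic isotropy lying over critical hyperplanes already occur among the $D_{ij}$ (those with $l_{ij}=1$), while invariant divisors with trivial generic isotropy over \emph{non-critical} divisors --- e.g.\ the components of $\psi^{-1}(D)$ for any non-critical hyperplane $D$, of which there are infinitely many by condition (iii) of the maximal orbit quotient --- contribute no Cox ring generators at all. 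So your characterization does not even single out a finite set of $m$ divisors, and matching it against Construction~\ref{constr:RAP0} would break down. With this entry corrected, the specialization argument goes through as you outline and agrees with the paper's citation-based proof.
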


Now we turn to the realization of arrangement varieties
as explicit $\TT$-varieties using the rings $R(A,P)$.
Recall that $R(A,P)$ is an irreducible, normal complete intersection ring of dimension $n+m-(r-c)$.
In particular, it defines an affine variety $V(g_1,\ldots,g_{r-c})$, where $g_1,\ldots,g_{r-c}$ are generators for the 
ideal $\bangle{g_v,v\in\Ker(A)}$ as in Construction~\ref{constr:RAP}.

\begin{construction}\label{constr:XAPSigma}
Let $R(A,P)$ be a $K$-graded ring as in Construction~\ref{constr:RAP} and assume the variables $T_{ij},S_k$ to be $K$-prime.
Choose any fan $\Sigma$ in $\ZZ^{r+s}$ having precisely the columns of $P$ as its primitive ray generators
and denote by $Z$ the corresponding toric variety.
Then we obtain the following diagram
\begin{center}
\begin{tikzcd}
V(g_1,\ldots,g_{r-c})\hspace{-20pt}
&
=:\hspace{-20pt}
&
\bar{X}\arrow[r,hook]
&
\bar{Z}
&
\hspace{-20pt}
:=
&
\hspace{-20pt}
\CC^{n+m} 
\\[-20pt]
&&
\rotatebox{90}{$\subseteq$}
&
\rotatebox{90}{$\subseteq$}
&&
\\[-20pt]
\bar{X} \cap \hat{Z}\hspace{-20pt}
&
=:\hspace{-20pt}
&
\hat{X}\ar[d,"\quot H"]\arrow[r,hook]
&
\hat{Z}\ar[d,"\quot H"]
&
&
\\
&
&
X\arrow[r,hook]
&
Z
&
&
\end{tikzcd}
\end{center}
where $H:=\Spec\ \CC(K)$
is the characteristic quasitorus of $Z$, acting on the 
characteristic space $\hat{Z}\rightarrow Z$ and $X(A,P,\Sigma):=X$ is the image of $\hat{X}$ under the latter morphism.
The torus $T$ acting on $Z$ splits
as a product $T^r\times T^s$ and the $\TT:=T^s$-factor
leaves $X\subseteq Z$ invariant.
\end{construction}

\begin{remark}
The varieties $X:=X(A,P,\Sigma)$ as in Construction~\ref{constr:XAPSigma} are irreducible and normal with dimension, invertible functions, divisor class group and Cox ring given by
$$
\dim(X)=s+c,\quad \Gamma(X,\OOO^*)=\CC^*,\quad
\Cl(X) = K,\quad \RRR(X)=R(A,P).
$$ 
Moreover the $\TT$-action on $X$ is
effective and of complexity $c$.
\end{remark}

\begin{example}\label{ex:sec2Lauf4}
We continue Example~\ref{ex:sec2Lauf3}. 
Note that the variables $T_{ij}$ and $S_1$ of $R(A,P)$ are $K$-prime. Denoting the columns of $P$ with $v_{ij}$ and $v_k$ with respect to the variables $T_{ij}$ and $S_k$ we choose the fan $\Sigma$ with maximal cones 
$$
\cone(v_{02},v_{11},v_{21},v_{31},v_{41}),\
\cone(v_{01},v_{21},v_{41},v_1),
$$
$$
\cone(v_{01},v_{11},v_{31},v_1),\
\cone(v_{01},v_{02},v_{21},v_{41}),\
\cone(v_{01},v_{02},v_{11},v_{31}),
$$
$$
\cone(v_{31},v_{41},v_1),\
\cone(v_{21},v_{31},v_1),\
\cone(v_{11},v_{41},v_1),\
\cone(v_{11},v_{21},v_1).
$$
The resulting variety $X(A,P,\Sigma)$ 
has dimension three, only constant invertible global functions, divisor class group $\Cl(X)\cong \ZZ^2\times (\ZZ_2)^3$ and Cox ring $\RRR(X) = R(A,P)$.
Moreover the $\TT$-action is of complexity two. 
\end{example}

\begin{construction}\label{constr:MOQ}
Let $X:=X(A,P,\Sigma)\subseteq Z$ be as in Construction~\ref{constr:XAPSigma}.
Then $X$ fits into the following diagram:
\begin{center}
\begin{tikzcd}
X\arrow[r, hook] \arrow[d,dashed]
& 
Z\arrow[d, dashed]
&
\supseteq
&
Z_0\arrow[lld]
\\
\PP_c\arrow[r, hook]
&
\PP_r
\end{tikzcd}
\end{center}
where $Z_0$ is the (open) union of the torus and all orbits of codimension one in $Z$,
the morphism $Z_0\rightarrow \PP_r$ is a toric morphism induced by the projection of tori $T^{r+s}\rightarrow T^{r}$, the downward rational maps are defined via this morphism and $\PP_c$ is linearly embedded into $\PP_r$ via $[x]\mapsto[A^t x]$.
\end{construction}

\begin{remark}
The rational map $X\dashrightarrow \PP_c$
is a maximal orbit quotient for the $\TT$-action of $X$, where the critical values form the hyperplane arrangement
$$
H_0,\ldots, H_r\subseteq \PP_c,\qquad
H_i:= \{[x]\in\PP_c;\bangle{a_i,x}=0\}.
$$
In particular any variety $X(A,P,\Sigma)$ as in Construction~\ref{constr:XAPSigma} is an arrangement variety.
\end{remark}

\begin{example}\label{ex:sec2Lauf5}
We continue Example~\ref{ex:sec2Lauf4}. The variety $X=X(A,P,\Sigma)$ is an arrangement variety having $X\dashrightarrow\PP_2$ as a maximal orbit quotient.
In this case $\PP_2$ is realized inside $\PP_4$ as 
$$\PP_2 = V(T_0+T_1+T_3,T_0+T_2+T_4)\subseteq\PP_4$$
and the critical values form the line arrangement in special position from Example~\ref{ex:sec2Lauf1}:
$$H_0 = V(T_0),\qquad H_1 = V(T_1),\qquad H_2 = V(T_2)$$
$$H_3 = V(T_0+T_1),\qquad H_4 =V(T_0+T_2).$$
In particular $X$ is an arrangement variety and as we will see later it is one of the three-dimensional Fano canonical complexity two varieties in Theorem~\ref{thm:classification}.
\end{example}

\begin{definition}
We call a variety $X(A,P,\Sigma)\subseteq Z$ as in Construction~\ref{constr:XAPSigma} an explicit arrangement variety.
\end{definition}

\begin{remark}
Every arrangement variety is equivariantly isomorphic to an explicit arrangement variety, see~\cite{HaHiWr2019}*{Thm. 3.10}.
\end{remark}

Let us recall the basic notions on tropical varieties.
For a closed subvariety $X\subseteq Z$ intersecting the torus non trivially consider the vanishing ideal $I(X \cap T)$ in the Laurent polynomial ring $\OOO(T)$. For every $f \in I(X\cap T)$ let $|\Sigma(f)|$ denote the support of the codimension one skeleton of the normal quasifan of its Newton polytope, 
where a quasifan is a fan, 
where we allow the cones to be non-pointed.
Then the \emph{tropical variety $\trop(X)$ of} $X$ is defined as follows, 
see~\cite{MaSt2015}*{Def. 3.2.1}:
$$
\trop(X) := \bigcap_{f \in I(X \cap T)} |\Sigma(f)| \subseteq \QQ^{\dim(Z)}.
$$

\begin{definition}\label{def:Pcone}
Let $X(A,P,\Sigma)\subseteq Z$ be an explicit arrangement variety. 
We denote the columns of $P$ with $v_{ij}$ and $v_k$ according to the variables $T_{ij}$ and $S_k$.
A {\em $P$-cone} is a cone $\sigma\subseteq\QQ^{r+s}$ 
such that its set of primitive ray generators is a subset of the columns of $P$, i.e.\
$$
\sigma=\cone(v_{ij_i},v_k;\ i\in I\subseteq\{0,\ldots,r\},\ j_i\in J_i\subseteq\{1,\ldots,n_i\}, k\in K\subseteq\{1,\ldots,m\}).
$$
We call a $P$-cone $\sigma\subseteq\QQ^{r+s}$
\begin{enumerate}
    \item a {\em leaf cone}, if $\sigma\subseteq|\trop(X)|$ holds.
    \item a {\em big cone}, if
    $\sigma^\circ\cap(\{0\}\times\QQ^s)\not = \emptyset$ holds.
    \item a {\em special cone}, if it is neither big nor leaf but
     $\sigma^\circ\cap|\trop(X)|\not = \emptyset$ holds.
\end{enumerate}
\end{definition}

Applying~\cite{Te2007}*{Lem. 2.2}, we obtain the following remark.

\begin{remark}
Let $X(A,P, \Sigma) \subseteq Z$ be an explicit arrangement variety. Then the cones in $\Sigma$ are of leaf, special or big~type.
\end{remark}

\begin{example}\label{ex:sec2Lauf6}
We continue Example~\ref{ex:sec2Lauf5} and 
investigate the fan $\Sigma$.
To describe the tropical variety of $X$ denote by 
$e_1,\ldots,e_4$ the canonical basis vectors of $\QQ^4$ and set
$$e_0:=-e_1-\ldots-e_4,\qquad e_5:=e_0+e_2+e_4,\qquad e_6:=e_0+e_1+e_3$$
and define a fan $\Delta$ with maximal cones
$\cone(e_i,e_j)$, where $(i,j)$ is one of the following tuples:
$$
(0,5),\ (0,6),\ (1,2),\ (1,4),\ (1,6),\ (2,3),\ (2,5),\ (3,4),\ (3,6),\ (4,5).
$$
Then $\trop(X)=|\Delta\times\QQ|$ holds.
Checking the items in Definition~\ref{def:Pcone} for the cones in $\Sigma$, we obtain one big cone
$$
\cone(v_{02},v_{11},v_{21},v_{31},v_{41}),
$$
four special cones
$$
\cone(v_{01},v_{21},v_{41},v_1),\
\cone(v_{01},v_{11},v_{31},v_1)
$$
$$
\cone(v_{01},v_{02},v_{21},v_{41}),\
\cone(v_{01},v_{02},v_{11},v_{31})
$$
and four leaf cones
$$
\cone(v_{31},v_{41},v_1),\
\cone(v_{21},v_{31},v_1),
$$
$$
\cone(v_{11},v_{41},v_1),\
\cone(v_{11},v_{21},v_1).
$$

\end{example}

\section{Proofs to Section~\ref{sec:2}}\label{sec:3}
This section is dedicated to the proof of the statements in Section~\ref{sec:2}. 
In a first step we investigate product structures on the rings $R(A,P_0)$. Then we turn to the proof of Theorem~\ref{thm:arrIntNormal}.

\begin{definition}\label{def:indecomposable}
Let $R(A,P_0)$ be a ring as in 
Construction~\ref{constr:RAP0}. 
\begin{enumerate}
    \item 
    We call the matrix $A$ \emph{indecomposable}
    if for any subset $I \subseteq \left\{1, \ldots, r+1\right\}$ we have $\left\{0\right\} \neq \mathrm{Lin}(a_i; \ i \in I) \cap \mathrm{Lin}(a_j; \ j \notin I).$
    \item  
    We call a ring 
    $R(A,P_0)$ \emph{indecomposable}
    if $A$ is indecomposable and $l_{ij}n_i > 1$ holds for all $i$.
\end{enumerate}
\end{definition}

\begin{proposition}\label{prop:decomp}
Every $\CC$-algebra $R(A,P_0)$ from Construction~\ref{constr:RAP0} is isomorphic as a $\CC$-algebra (forgetting the $K_0$-grading)
to a product  
\begin{equation}\label{equ:decomposition}
\bigotimes\limits_{i=1}\limits^t R(A^{(i)},P_0^{(i)})
\otimes \CC[S_1,\ldots,S_{m'}],
\end{equation}
where the algebras 
$R(A^{(i)},P_0^{(i)})$
are indecomposable with $m^{(i)} = 0$ for all $i=1,\ldots,t$ and $m'\geq m$ holds.
\end{proposition}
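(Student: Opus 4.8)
The plan is to argue by induction on the number $r+1$ of columns of $A$, after first peeling off the free variables. Since none of the $S_k$ occurs in any relation $g_v$, we immediately get $R(A,P_0)\cong R(A,\bar P_0)\otimes\CC[S_1,\ldots,S_m]$, where $\bar P_0$ is $P_0$ with its $m$ zero columns deleted; so I may assume $m=0$ and re-append the polynomial factor at the end, merging it with any further polynomial factors produced along the way. For the base case, if $A$ has a single column then $\ker(A)=0$, there are no relations, and $R(A,P_0)=\CC[T_{01},\ldots,T_{0n_0}]$ is a polynomial ring, which I absorb into the $\CC[S_k]$-factor.

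For the inductive step with at least two columns I first treat the case that $A$ is decomposable: pick a partition $\{0,\ldots,r\}=I\sqcup J$ into nonempty parts with $\mathrm{Lin}(a_i;\,i\in I)\cap\mathrm{Lin}(a_j;\,j\in J)=\{0\}$. For $v\in\ker(A)$ one has $\sum_{i\in I}v_ia_i=-\sum_{j\in J}v_ja_j$, an element of the trivial intersection, so both sums vanish and thus $\ker(A)=\ker(A_I)\oplus\ker(A_J)$ for the two column blocks $A_I,A_J$. Hence the relation ideal is generated by the $g_v$ with $v$ supported on $I$ (involving only the $T_{ij}$ with $i\in I$) together with those supported on $J$, which yields the tensor decomposition $R(A,P_0)\cong R(A_I,P_0^I)\otimes R(A_J,P_0^J)$. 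Both factors have strictly fewer columns, and their matrices inherit full rank and pairwise independence of columns, so the induction hypothesis applies to each.

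It then remains to handle an indecomposable $A$ with at least two columns. If every monomial $T_i^{l_i}$ fails to be a single variable of degree one (that is, $n_i\geq 2$ or $l_{i1}\geq 2$), then $R(A,P_0)$ is itself indecomposable and we are done. Otherwise some $T_{i_0}^{l_{i_0}}=T_{i_01}$ is a single degree-one variable. Here indecomposability forces $a_{i_0}\in\mathrm{Lin}(a_j;\,j\neq i_0)$: taking $I=\{i_0\}$, the line $\mathrm{Lin}(a_{i_0})$ must meet $\mathrm{Lin}(a_j;\,j\neq i_0)$ nontrivially and hence lies inside it. Therefore some kernel vector has nonzero $i_0$-entry, $T_{i_01}$ occurs linearly in a relation, and solving that relation for $T_{i_01}$ and substituting eliminates both the variable $T_{i_01}$ and one generator of the relation ideal.

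The step I expect to be the main obstacle is checking that this elimination returns a ring again of the shape in Construction~\ref{constr:RAP0}. Writing $A'$ for $A$ with the column $a_{i_0}$ removed, I would verify that $A'$ still has full rank $c+1$ (this is exactly where $a_{i_0}\in\mathrm{Lin}(a_j;\,j\neq i_0)$ enters, and it also forces $r>c$ so that $A'$ is admissible input) and pairwise independent columns, and that the surviving relations correspond precisely to $\ker(A')$. Concretely, fixing $v^*\in\ker(A)$ with $v^*_{i_0}\neq 0$, the surviving relations form the space $\{\,g_v-(v_{i_0}/v^*_{i_0})g_{v^*}:v\in\ker(A)\,\}$, and projecting away the $i_0$-coordinate identifies it with $\ker(A')$, since for $v\in\ker(A)$ one has $\sum_{i\neq i_0}v_ia_i=0$ if and only if $v_{i_0}=0$. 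This gives $R(A,P_0)\cong R(A',P_0')$ with one fewer column, to which the induction hypothesis applies. Since the original free variables survive throughout and the base cases only ever create additional polynomial factors, the accumulated polynomial part yields $m'\geq m$, completing the argument.
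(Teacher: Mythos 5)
Your proof is correct, and although its two main reductions coincide with the paper's, your route is genuinely different in one important respect. The paper proceeds by fixing the finest direct-sum decomposition of $\CC^{c+1}$ adapted to the columns of $A$ (Lemma~\ref{lem:decomp}) and then applying a technical lemma whose two cases are exactly your two reductions: a one-dimensional summand produces free variables (your single-column base case), and a splitting into two summands of dimension at least two produces a tensor factorization, proved by the same observation that $\ker(A)$ splits along the partition of the columns; your induction on the number of columns repackages this, and what the paper's formulation buys is the canonical finest decomposition in one shot. What you do beyond the paper is the elimination step for an indecomposable matrix with some $n_{i_0}=1$, $l_{i_0 1}=1$: solving the relation linearly for $T_{i_0 1}$ and checking that the result is $R(A',P_0')$ with the column $a_{i_0}$ deleted. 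The paper's proof has no counterpart to this step, and it is in fact needed for the statement as formulated: being an indecomposable \emph{ring} in the sense of Definition~\ref{def:indecomposable} requires $l_{ij}n_i>1$, which matrix-indecomposability does not ensure. For instance, taking $A$ with columns $e_1,e_2,e_1+e_2$, all $n_i=1$ and exponents $2,2,1$ gives a matrix-indecomposable datum whose algebra $\CC[T_{01},T_{11},T_{21}]/\langle T_{01}^2+T_{11}^2-T_{21}\rangle$ is a polynomial ring in two variables; neither case of the paper's technical lemma applies to it (the only adapted decomposition is the trivial one), and only an elimination argument like yours shows it has the asserted form with $t=0$, $m'=2$. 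So your route proves the proposition in full, where the paper's ``direct consequence'' is too quick. The only points you gloss over are harmless and shared with the paper: the block matrices $A_I$, $A_J$ become full-rank input data only after restricting coordinates to their column spans via admissible row operations (Remark~\ref{rem:admissible}), and your intermediate single-column data fall outside the convention $r\geq c>0$ of Construction~\ref{constr:RAP0}, which is precisely why absorbing them into the polynomial factor is the right bookkeeping.
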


\begin{remark}
Note that these algebras are in general not isomorphic 
as graded algebras concerning their natural gradings: The $K_0$-grading defined on the product via the isomorphism is in general coarsening the grading defined via the product $K_0^{(1)}\times\ldots\times K_0^{(t)}\times\ZZ^{m'}$. We will investigate this fact in Section~\ref{sec:8}.
\end{remark}

\begin{remark}\label{rem:admissible}
The following list of {\em admissible operations} does not effect the isomorphy type of a ring $R(A,P_0)$:
\begin{enumerate}
    \item any elementary row  operation on $A$.
    \item swap columns in $A$ and accordingly columns in $P_0$.
    \item swap any column in $P_0$ inside a block $l_i$.
\end{enumerate}
In particular without loss of generality we may always assume the matrix $A$ to be in reduced row echelon form $A=(E_{c+1},a_{c+1},\ldots,a_r)$
and the polynomials $g_i$ generating $\mathrm{Ker}(A)$ to be of the following form: 
\begin{equation}\label{equ:relations}
g_i := \lambda_{0,i}T_0^{l_0} + \ldots + \lambda_{c,i}T_{c}^{l_{c}} + \lambda_{(c+i),i}T_{(c+i)}^{l_{(c+i)}},\qquad 1\leq i\leq r-c.    
\end{equation}
Note that we have $g_i = g_{v_i}$ for $v_i = (a_{c+1+i},-e_{i})$, where $e_i$ denotes the $i$-th canonical basis vector of $\CC^{r-c}$.
\end{remark}

We turn to the proof of Proposition~\ref{prop:decomp}.
The following lemma is straightforward but for the convenience of the reader we will prove it here:
\begin{lemma}\label{lem:decomp}
Let $A = (a_0, \ldots, a_r)$ be a matrix as in Construction~\ref{constr:RAP0}. Then there exists a unique decomposition of $\CC^{c+1}$
into vectorsubspaces $V_1 \oplus \ldots \oplus V_t$
such that the following holds:
\begin{enumerate}
    \item 
    For each $0 \leq i \leq r$ there exists $j(i) \in \left\{1, \ldots, t\right\}$ with $a_i \in V_{j(i)}$.
    \item
    If $V_1' \oplus \ldots \oplus V_s'$ is any other decomposition fulfilling {\rm{(i)}}, then
    for every $1 \leq i \leq t$ there exists $1 \leq j \leq s$ with 
    $V_i \subseteq V_j'$.
\end{enumerate}
\end{lemma}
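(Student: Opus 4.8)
The plan is to identify the desired decomposition with the one induced on $\CC^{c+1}$ by the connected components of the linear matroid on the columns $a_0, \ldots, a_r$, and then to verify existence, the direct sum property, and the universal property (ii) by hand, so that no external matroid theory is invoked. The pairwise linear independence of the columns and the full rank hypothesis will be the only structural inputs used.

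First I would set up the relevant relation on the index set $\{0, \ldots, r\}$. Call a nonempty subset $C$ a \emph{circuit} if the columns $a_i$ with $i \in C$ are linearly dependent while every proper subset is linearly independent; equivalently $C = \supp(v)$ for some $v \in \Ker(A)$ minimal with respect to inclusion of supports. Declare $i \sim j$ whenever $i = j$ or $i, j$ lie in a common circuit, and pass to the transitive closure, which is an equivalence relation. Let $E_1, \ldots, E_t$ be its classes and set $V_p := \mathrm{Lin}(a_i; \ i \in E_p)$. By construction each $a_i$ lies in exactly one $V_p$, so (i) holds, and the key feature to record is that every circuit is contained in a single class $E_p$, since any two of its elements are already $\sim$-related in one step.

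Next I would establish $\CC^{c+1} = V_1 \oplus \ldots \oplus V_t$. Spanning is immediate, as the $a_i$ span $\CC^{c+1}$ by full rank and each lies in some $V_p$. For directness I would argue by a rank count: choose in each class $E_p$ a maximal linearly independent subset $B_p$ of columns, so $|B_p| = \dim V_p$, and observe that $B := B_1 \cup \ldots \cup B_t$ is linearly independent. Indeed, any dependence among elements of $B$ has a support containing a circuit $C \subseteq B$; that circuit lies in a single class $E_p$ and hence inside $B_p$, contradicting the independence of $B_p$. Thus $\sum_p \dim V_p = |B| \le c+1$, while submodularity gives $\sum_p \dim V_p \ge \dim(\sum_p V_p) = c+1$, and equality forces the sum to be direct.

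Finally, for the universal property (ii), let $V_1' \oplus \ldots \oplus V_s'$ be any decomposition satisfying (i), and write $h(i)$ for the index with $a_i \in V_{h(i)}'$. I would show that $h$ is constant on each circuit $C$: grouping the defining relation $\sum_{i \in C} \lambda_i a_i = 0$ according to the blocks $V_j'$ yields $\sum_j w_j = 0$ with $w_j \in V_j'$, so directness of the $V_j'$ forces every partial sum $w_j$ to vanish; by minimality of the circuit no proper nonempty subset of $C$ supports a relation, so only one block occurs and $h$ is constant on $C$. Hence $h$ is constant on each class $E_p$, giving $V_p \subseteq V_j'$ for the common value $j$, which is (ii). Uniqueness follows formally, since two decompositions satisfying (i) and (ii) refine into one another and, together with the direct sum property, must coincide. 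The main work is the rank computation and the non-splitting of circuits; once these are in place the statement follows.
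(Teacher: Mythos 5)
Your proof is correct, but it takes a genuinely different route from the paper's. The paper argues by iterated refinement: it starts from an arbitrary decomposition satisfying (i) (e.g.\ the trivial one), and whenever (ii) fails against some decomposition $V_1'\oplus\ldots\oplus V_s'$, it replaces an offending summand $V_i$ by the pieces $V_i\cap V_{j(k)}'$, each of strictly smaller dimension; termination of this process yields a decomposition satisfying (ii), and uniqueness is then formal. You instead construct the finest decomposition explicitly, as the connected components of the linear matroid on the columns: the summands are the spans of the equivalence classes generated by ``lying in a common circuit'', directness comes from your rank count (the squeeze $c+1\le\sum_p\dim V_p=|B|\le c+1$), and property (ii) comes from the fact that a circuit cannot split across the summands of a direct decomposition satisfying (i), which is exactly where minimality of circuits and directness enter. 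Your individual verifications (every dependent set contains a circuit, independence of $B$, constancy of $h$ on circuits and hence on classes) are all sound, and both proofs are elementary and of comparable length. What your approach buys: an explicit combinatorial description of the canonical decomposition, which makes the connection with indecomposability of $A$ (Definition~\ref{def:indecomposable}) and with the tensor factorization in Proposition~\ref{prop:decomp} transparent; it also sidesteps a point the paper's refinement step leaves implicit, namely that any summand of a decomposition satisfying (i) is automatically spanned by the columns it contains (this uses that the columns span $\CC^{c+1}$, and is what justifies the equality $V_i=V_i\cap\bigl(\bigoplus_k V_{j(k)}'\bigr)$ there). What the paper's approach buys: no circuit formalism and no rank bookkeeping. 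One shared, pedantic caveat: uniqueness holds only up to discarding zero summands, which your closing mutual-refinement argument handles implicitly since all your summands are spans of nonzero columns.
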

\begin{proof}
Let $V_1 \oplus \cdots \oplus V_t$ be any decomposition of $\CC^{c+1}$ fulfilling \rm{(i)}. 
We construct a decomposition fulfilling (ii) by successively refining this given decomposition.
For this let $V_1' \oplus \ldots \oplus V_s'$ be any other decomposition fulfilling \rm{(i)} and assume 
our given decomposition does not fulfill (ii).
Then there exists $1 \leq i \leq s$ such that 
$V_i \not \subseteq V_j'$ for all $1 \leq j \leq s$. Set $A_i := \left\{k;  \ a_k \in V_i\right\}$.
Then for every $k \in A_i$ there exists $j(k)$ with
$a_k \in V_{j(k)}'.$
In particular, we obtain a decomposition
$$
V_i = 
V_i \cap (\bigoplus_{k \in A_i} V_{j(k)}') 
=
\bigoplus_{k \in A_i} V_i \cap V_{j(k)}',
$$
and $\mathrm{dim}(V_i) > \dim(V_i \cap V_{j(k)})$ holds for any $k \in A_i$. 
Iterating this step we end up with a decomposition fulfilling (ii). 
\end{proof}

Proposition~\ref{prop:decomp}
is a direct consequence of the following more technical
Lemma.

\begin{lemma}
Let $R(A,P_0)$ be a ring as in Construction $\ref{constr:RAP0}$.
Then the following statements hold:
\begin{enumerate}
    \item 
    Let $V_1 \oplus V_2 = \CC^{c+1}$ be a decomposition fulfilling assertion (i) of Lemma~\ref{lem:decomp} and assume $\dim(V_1) = 1$.
    Then $R(A,P_0) \cong R(A',P_0')$ holds, for a tuple $(A',P_0')$ with $\mathrm{rk}(A') < \mathrm{rk}(A)$ and $m'>m$.
    \item 
    Let $V_1 \oplus V_2 = \CC^{c+1}$ be a decomposition fulfilling assertion (i) of 
    Lemma~\ref{lem:decomp} and assume $\dim(V_i) > 1$ for $i = 1,2$.
    Then we have  
    $$R(A,P_0) \cong R(A^{(1)},P_0^{(1)}) \otimes R(A^{(2)},P_0^{(2)}),$$
    for suitably chosen data $(A^{(i)},P_0^{(i)})$ with $\mathrm{rk}(A^{(1)})+\mathrm{rk}(A^{(2)}) = \mathrm{rk}(A)$ holds.
\end{enumerate}
\end{lemma}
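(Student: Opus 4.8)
The plan is to handle both assertions through a single observation: since the columns of $A$ respect the decomposition $V_1 \oplus V_2 = \CC^{c+1}$, the kernel of $A$ decomposes accordingly, and this forces the defining relations to split into two groups living on disjoint sets of variables. First I would partition the block indices as $\{0,\ldots,r\} = I_1 \sqcup I_2$, setting $I_\alpha := \{i;\ a_i \in V_\alpha\}$; the assignment $j(i)$ of assertion (i) of Lemma~\ref{lem:decomp} makes this well defined. Because $A$ has full rank, its columns span $\CC^{c+1} = V_1 \oplus V_2$, so $\{a_i;\ i \in I_\alpha\}$ spans $V_\alpha$ and hence $\rk(A^{(\alpha)}) = \dim(V_\alpha)$, where $A^{(\alpha)}$ is the submatrix of columns indexed by $I_\alpha$, regarded as a matrix with values in $V_\alpha$.

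The key step is to establish $\Ker(A) = \Ker(A^{(1)}) \oplus \Ker(A^{(2)})$ inside $\CC^{r+1} = \CC^{I_1} \oplus \CC^{I_2}$. For $v \in \Ker(A)$ one writes $0 = \sum_{i \in I_1} v_i a_i + \sum_{i \in I_2} v_i a_i$, a sum of a vector in $V_1$ and one in $V_2$; directness of the sum forces each summand to vanish, so $v|_{I_\alpha} \in \Ker(A^{(\alpha)})$, and the reverse inclusion is immediate. Consequently a basis of $\Ker(A)$ may be chosen as the union of a basis of $\Ker(A^{(1)})$, supported on $I_1$, and one of $\Ker(A^{(2)})$, supported on $I_2$. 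By the normal form of Remark~\ref{rem:admissible} the associated generators $g_v$ then fall into two groups: those from $\Ker(A^{(1)})$ involve only the monomials $T_i^{l_i}$ with $i \in I_1$, and those from $\Ker(A^{(2)})$ only those with $i \in I_2$.

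For assertion (ii) I would set $A^{(\alpha)}$ and $P_0^{(\alpha)}$ to be the data obtained by restricting to $V_\alpha \cong \CC^{\dim(V_\alpha)}$, keeping the columns indexed by $I_\alpha$ together with their exponent blocks $l_i$ (distributing the free $S$-variables to either factor). Since the variables $T_{ij}$ split according to $i \in I_1$ or $i \in I_2$ and the relations split accordingly, one reads off $R(A,P_0) \cong R(A^{(1)},P_0^{(1)}) \otimes R(A^{(2)},P_0^{(2)})$ as $\CC$-algebras, and the rank identity $\rk(A^{(1)}) + \rk(A^{(2)}) = \dim(V_1) + \dim(V_2) = c+1 = \rk(A)$ follows. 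For assertion (i), the hypothesis $\dim(V_1) = 1$ together with pairwise linear independence of the columns forces $|I_1| = 1$, say $I_1 = \{i_0\}$; then $A^{(1)}$ is a single nonzero column, whence $\Ker(A^{(1)}) = \{0\}$ and, by the kernel decomposition, $v_{i_0} = 0$ for every $v \in \Ker(A)$. Thus the monomial $T_{i_0}^{l_{i_0}}$ appears in no relation and the variables $T_{i_0 1},\ldots,T_{i_0 n_{i_0}}$ may be absorbed into the $S$-block. This exhibits $R(A,P_0) \cong R(A',P_0')$, where $A'$ carries the columns $a_i$, $i \neq i_0$, spanning $V_2$, so that $\rk(A') = \dim(V_2) = c < c+1 = \rk(A)$, while $m' = m + n_{i_0} > m$.

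The only real bookkeeping — and the step I expect to be most delicate — is checking that the restricted data $(A',P_0')$ and $(A^{(\alpha)},P_0^{(\alpha)})$ can be put into the normal form demanded by Construction~\ref{constr:RAP0}, in particular re-establishing the distinguished $-l_0$ block of $P_0$ and the reduced row echelon shape of $A$ after restriction. This causes no difficulty: the admissible operations of Remark~\ref{rem:admissible} leave the isomorphy type unchanged and let us relabel one block in each $I_\alpha$ as the zeroth one, while — since we only claim an isomorphism of $\CC$-algebras, forgetting the grading — the matrix $P_0$ enters solely through the exponent vectors $l_i$, which are inherited verbatim from $R(A,P_0)$. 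The fact that the two factors together contribute one fewer $P_0$-row than the original is precisely the grading coarsening recorded in the remark following Proposition~\ref{prop:decomp} and is invisible at the level of $\CC$-algebras.
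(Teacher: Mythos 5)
Your proposal is correct and takes essentially the same route as the paper: the paper uses the admissible operations of Remark~\ref{rem:admissible} to sort the columns and row-reduce $A$ into block-diagonal form, at which point the splitting of the relations into two groups on disjoint variables (your key step $\Ker(A)=\Ker(A^{(1)})\oplus\Ker(A^{(2)})$) becomes immediate, whereas you prove that same kernel decomposition coordinate-freely. From there both arguments conclude identically: in (ii) the relations and variables separate to give the tensor decomposition, and in (i) the single column spanning $V_1$ has trivial kernel contribution, so the block $T_{i_0 1},\ldots,T_{i_0 n_{i_0}}$ appears in no relation and is absorbed into the free $S$-variables with $m'=m+n_{i_0}$.
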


\begin{proof}
Let $V_1 \oplus V_2 = \CC^{c+1}$ be a
decomposition fulfilling Assertion (i) of Lemma~\ref{lem:decomp}.
Then, by applying Remark~\ref{rem:admissible}~(ii)
we may assume $V_1 = \mathrm{Lin}(a_0, \ldots, a_t)$
and $V_2 = \mathrm{Lin}(a_{t+1}, \ldots, a_r)$.
Furthermore, as elementary row operation do note effect the isomorphy type of $R(A,P_0)$, we may assume 
$V_1 = \mathrm{Lin}(e_1, \ldots, e_s)$
and
$V_2 = \mathrm{Lin}(e_{s+1}, \ldots, e_{c + 1})$.

We prove (i).
For this let $\mathrm{dim}(V_1) = 1$, i.e.\ we have $t=0$ and $s =1$. Then 
$a_0 = \lambda e_1$ holds and all
entries $a_{1j}$ with $j \geq 1$ equal zero.
We conclude 
$R(A,P_0)  \cong R(A',P_0')$,
with $m'=m + n_0$, $A'$ is the matrix obtained by deleting the first row and the first column of $A$ 
and $P_0'$ is build up from the tuples $l_1, \ldots, l_r$.

We turn to (ii). Assume we have $\dim(V_i) > 1$ for $i=1,2$. 
Then $A$ is a block matrix of the form  
$$
A = \left[
\begin{array}{cc}
     A^{(1)}&0  \\
     0&A^{(2)} 
\end{array}
\right]
$$ 
and we conclude $R(A,P_0) \cong R(A^{(1)},P_0^{(1)}) \otimes R(A^{(2)},P_0^{(2)})$, 
where $P_0^{(1)}$ is build up from $l_0, \ldots, l_t$,
$P_0^{(2)}$ is build up from $l_{t+1}, \ldots, l_r$
and $m_1, m_2$ are positive integers with
$m_1 +m_2 = m$.
\end{proof}

We turn to the proof of Theorem~\ref{thm:arrIntNormal}. 
Let $(A,P_0)$ be as in Construction~\ref{constr:RAP0}. Then the defining relations $g_v$ of the ring $R(A,P_0)$ can be obtained in the following way:
For any $v\in \Ker(A)$ write
$$f_v := v_0T_0+\ldots v_rT_r\in\CC[T_0,\ldots,T_r]$$ for the corresponding linear form. Then $g_v = f_v(T_0^{l_0},\ldots,T_r^{l_r})$ holds. 
We will use this observation to prove in Lemma~\ref{lem:connected} 
connectedness of the affine variety $X := V(g_v;\ v\in\Ker(A))$.
Moreover, in Proposition~\ref{prop:Xpure}
we deduce the dimension of $X$ from that of $Y := V(f_v;\ v\in\Ker(A))$.

\begin{remark}\label{rem:quasihom}
Consider the polynomial ring $\CC[T_0, \ldots, T_r]$ endowed with an effective pointed $\ZZ$-grading
$\deg(T_i) = w_i\in \ZZ_{>0}$ and let $f \in \CC[T_0, \ldots, T_r]$ be any homogeneous polynomial. 
Then the polynomial 
$$g := f(T_0^{l_0}, \ldots, T_r^{l_r}) \in \CC[T_{ij}, S_k]$$
is homogeneous with respect to the grading defined by:
\begin{equation}\label{equ:degTij}
\mathrm{deg}(T_{ij}) 
=
\frac
{
n_0 \ldots n_r l_{01} \ldots l_{rn_r} 
}
{
n_il_{ij}
}
\cdot w_i\in\mathbb{Z}_{>0}.
\end{equation}
\end{remark}

\begin{lemma}\label{lem:connected}
In the situation of Remark~\ref{rem:quasihom} let $f_1, \ldots, f_s \in \CC[T_0, \ldots, T_r]$ be homogeneous polynomials
and set
$g_i := f_i(T_0^{l_0},\ldots,T_r^{l_r})\in\CC[T_{ij},S_k]$.
Then the affine variety
$X:=\mathrm{V}(g_1,\ldots,g_s)$
is connected.
\end{lemma}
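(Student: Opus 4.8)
The plan is to exploit the homogeneity supplied by Remark~\ref{rem:quasihom}. The crucial observation is that the grading \eqref{equ:degTij} on $\CC[T_{ij},S_k]$ depends only on the chosen weights $w_i$ and on the exponents $l_{ij}$, not on the individual form $f$ being substituted. Consequently, all of the polynomials $g_1,\ldots,g_s$ are \emph{simultaneously} homogeneous with respect to one and the same $\ZZ$-grading on $\CC[T_{ij},S_k]$, and by \eqref{equ:degTij} every variable $T_{ij}$ (and likewise every $S_k$) receives a strictly positive degree. I would therefore reduce the claim to the general fact that the common zero set of finitely many polynomials that are homogeneous with respect to a \emph{positive} $\ZZ$-grading is connected, since it contracts onto the origin.

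To make this precise, I would first translate the grading into a $\CC^*$-action. Writing $d_{ij}:=\deg(T_{ij})>0$ and $d_k:=\deg(S_k)>0$, set
\begin{equation*}
\lambda\colon \CC\times\CC^{n+m}\to\CC^{n+m},\qquad
\lambda\bigl(t,(x_{ij},y_k)\bigr):=(t^{d_{ij}}x_{ij},\,t^{d_k}y_k).
\end{equation*}
Since all exponents are positive integers, $\lambda$ is a morphism with $\lambda(0,x)=0$ for every $x$. As each $g_i$ is homogeneous of positive degree for this grading, one has $g_i(\lambda(t,x))=t^{\deg(g_i)}g_i(x)$, so $\lambda$ restricts to a morphism $\CC\times X\to X$; moreover $0\in X$, because every $g_i$ vanishes at the origin.

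Connectedness then follows by the standard contraction argument. Fixing $x\in X$, the orbit map $t\mapsto\lambda(t,x)$ is a morphism from the irreducible curve $\CC$ into $X$ whose image contains both $x$ (at $t=1$) and the origin $0$ (at $t=0$); as the continuous image of a connected set, this image is connected, so $x$ and $0$ lie in a common connected component of $X$. Since this holds for every $x\in X$ and $X$ is nonempty, $X$ is connected.

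The only genuinely delicate point is the one flagged in the first paragraph: one must verify that the induced grading on $\CC[T_{ij},S_k]$ is the same for all $g_i$ and assigns positive weights to every variable, since positivity is exactly what allows $\lambda$ to extend across $t=0$ and contract $X$ to the origin. Both facts are immediate from formula \eqref{equ:degTij}. (If some $f_i$ were a nonzero constant, then $g_i$ would be a nonzero constant and $X=\emptyset$, which is vacuously connected; otherwise every $g_i$ has positive degree and the argument above applies verbatim.)
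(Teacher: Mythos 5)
Your proof is correct and takes essentially the same approach as the paper: the paper's proof also introduces the one-parameter subgroup $t \mapsto (t^{\zeta_{01}}, \ldots, t^{\zeta_{rn_r}}, t, \ldots, t)$ with the positive weights $\zeta_{ij}$ from (\ref{equ:degTij}) and concludes from the fact that this $\CC^*$-action on $X$ has $0$ as an attractive fixed point. Your write-up simply makes explicit the contraction argument (orbit maps connecting every point to the origin) and the degenerate constant case, both of which the paper leaves implicit.
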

\begin{proof}
Consider the acting torus $(\CC^*)^{n+m}$ 
of $\CC^{n+m}$ and the multiplicative one-parameter subgroup
$$\lambda \colon \CC^* \rightarrow (\CC^*)^{n+m},
\qquad
t \mapsto (t^{\zeta_{01}}, \ldots, t^{\zeta_{rn_r}}, t, \ldots, t),
$$
where $\zeta_{ij} := \deg(T_{ij})$ is as in (\ref{equ:degTij}).
Then by construction the image $\lambda(\CC^*)$ acts on $X$ and has $0$ as an attractive fixed point. This gives the assertion.
\end{proof}

\begin{proposition}\label{prop:Xpure}
Let $Y = V(f_1,\ldots,f_s) \subseteq \CC^{r+1}$ 
be irreducible of dimension $r+1-s$ and set
$$
X:=V(g_1,\ldots,g_s)\ \text{ with }\ g_i := f_i(T_0^{l_0},\ldots,T_r^{l_r})\in\CC[T_{ij},S_k].$$
Then $X$ is pure of dimension $n+m-s$.
\end{proposition}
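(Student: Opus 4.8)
The plan is to reduce the statement about $X = V(g_1, \ldots, g_s) \subseteq \CC^{n+m}$ to the given information about $Y = V(f_1, \ldots, f_s) \subseteq \CC^{r+1}$ by exhibiting an explicit dominant finite-type morphism between them and analyzing its fibres. First I would introduce the substitution map
\[
\Phi \colon \CC^{n+m} \to \CC^{r+1}, \qquad
(T_{ij}, S_k) \mapsto (T_0^{l_0}, \ldots, T_r^{l_r}),
\]
which satisfies $g_i = f_i \circ \Phi$ by the very definition of the $g_i$. Hence $X = \Phi^{-1}(Y) \times \CC^m$ up to the trivial $S_k$-factors; more precisely, $X$ is the preimage under the monomial map of $Y$, with the $m$ variables $S_k$ left free. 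The key point is that $\Phi$ restricted to the torus $(\CC^*)^n$ maps onto $(\CC^*)^{r+1}$ with finite fibres (a surjective homomorphism of tori followed by nothing — it is the monomial map with exponent matrix $\mathrm{diag}(l_0, \ldots, l_r)$ arranged over blocks), so over the torus the fibre dimension is exactly $n - (r+1)$, and then one adds the $m$ free coordinates.

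The main computation is a dimension count via fibre dimension. I would argue that every irreducible component of $X$ has dimension $n + m - s$ as follows. Upper bound: each $g_i$ is a single non-trivial equation, and since $Y$ is a complete intersection of codimension $s$, pulling back along the dominant map $\Phi$ cannot drop codimension, so $\dim X \leq n + m - s$; concretely every component of $X$ maps into $Y$ (times the $S$-factor) and the generic fibre of $\Phi|_X$ over $Y$ has dimension $n - (r+1) + m$ away from the coordinate hyperplanes, giving $\dim \leq (r+1-s) + (n-(r+1)) + m = n+m-s$. Lower bound: by Krull's principal ideal theorem, cutting $\CC^{n+m}$ by the $s$ equations $g_1, \ldots, g_s$ makes every component of $X$ of codimension at most $s$, hence $\dim \geq n+m-s$. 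Combining the two bounds, and invoking Krull once more to guarantee that \emph{no} component can exceed codimension $s$, yields purity.

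The hard part will be handling the components of $X$ that lie on the coordinate hyperplanes $\{T_{ij} = 0\}$, where the monomial map $\Phi$ degenerates and the naive fibre-dimension estimate over the torus no longer applies directly. On such a hyperplane some monomial $T_i^{l_i}$ vanishes identically, so the corresponding variable drops out of the relations, effectively decreasing both $n$ and $r$ and replacing $Y$ by a coordinate section; I would need to check that these boundary strata, treated by induction on the number of vanishing blocks, still produce components of the expected dimension and never higher-dimensional ones. This is where the hypothesis that $Y$ is \emph{irreducible} of the exact dimension $r+1-s$ (so that its coordinate sections behave predictably, or at least do not create excess-dimensional pieces) must be used carefully, together with the quasihomogeneity from Remark~\ref{rem:quasihom} which guarantees $0 \in X$ is an attractive fixed point and hence that purity can be tested on the cone structure. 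An alternative, possibly cleaner route for the upper bound is to use that $\Phi \colon \CC^n \to \CC^{r+1}$ is a finite morphism (it is integral, being given by the power-sum-type monomials $T_i^{l_i}$ in disjoint variable blocks, with each coordinate of the target integral over the source), so $\Phi^{-1}(Y)$ is finite over $Y$ and therefore has the same dimension as $Y$; adjoining the $S_k$ then adds exactly $m$. I would favour this finiteness argument, as it sidesteps the boundary-stratum bookkeeping entirely and reduces the whole statement to the single clean fact that a finite surjective morphism preserves dimension on each component.
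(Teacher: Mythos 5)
Your overall skeleton (Krull's principal ideal theorem for the lower bound, fibre dimension of the monomial map for the upper bound) is the same as the paper's, but the proposal as written has a genuine gap, and it sits exactly where you chose to place your trust. The finiteness claim you favour is false: the map $\Phi\colon\CC^{n}\to\CC^{r+1}$, $x\mapsto(x_0^{l_0},\ldots,x_r^{l_r})$, is \emph{not} finite as soon as some $n_i>1$. For instance with $n_0=2$, $l_0=(1,1)$ the block map is $(T_{01},T_{02})\mapsto T_{01}T_{02}$, whose fibre over $0$ is the union of the two coordinate axes; finite morphisms have finite fibres. Your parenthetical justification inverts the direction of integrality: what is needed is that each $T_{ij}$ be integral over $\CC[T_0^{l_0},\ldots,T_r^{l_r}]$ (false), not that the target coordinates be integral over the source ring (trivially true and irrelevant). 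Indeed the claim contradicts your own earlier computation that the fibres over the torus have dimension $n-(r+1)$, which is positive in the cases of interest --- and note those torus fibres are therefore not finite either, and the exponent matrix is an $(r+1)\times n$ block matrix, not $\mathrm{diag}(l_0,\ldots,l_r)$. So the ``cleaner route'' collapses, and you are thrown back on your first route, whose ``hard part'' (components inside coordinate hyperplanes) you explicitly leave unexecuted.

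That hard part, however, has a one-line resolution, and it is precisely what the paper's proof does: the fibre dimension of $\varphi$ does not jump on the boundary at all. For \emph{every} $y\in\CC^{r+1}$ one has $\varphi^{-1}(y)=V(T_0^{l_0}-y_0,\ldots,T_r^{l_r}-y_r)$, and each equation $T_i^{l_i}=y_i$ defines a hypersurface in its block $\CC^{n_i}$ regardless of whether $y_i$ vanishes (for $y_i=0$ it is the union of the coordinate hyperplanes of that block), so $\dim\varphi^{-1}(y)=n+m-(r+1)$ uniformly. Given this, take any irreducible component $X_j$ of $X$ and set $Y_j:=\overline{\varphi(X_j)}\subseteq Y$; the fibre-dimension theorem applied to the dominant map $X_j\to Y_j$ gives a generic fibre of dimension $\dim X_j-\dim Y_j$, and since that fibre sits inside a fibre of $\varphi$ one gets $\dim X_j\le\dim Y_j+n+m-(r+1)\le(r+1-s)+n+m-(r+1)=n+m-s$. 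No stratification, no induction, and no delicate appeal to the behaviour of coordinate sections of $Y$ is needed; combined with your (correct) Krull lower bound this yields purity.
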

\begin{proof}
Let $X=X_1\cup\ldots\cup X_t$ be the decomposition of $X$ into irreducible components. Note that we have $\dim(X_j)\geq n+m-s$ for $1 \leq j \leq t$ as $X=V(g_1,\ldots,g_s)$ holds.
Consider the surjective morphism 
$$
\varphi\colon \CC^{n+m}\rightarrow \CC^{r+1}
\qquad
(x_{01}, \ldots, x_{rn_r}, x_1, \ldots x_m) \mapsto (x_0^{l_0}, \ldots, x_r^{l_r}).
$$
Then by construction of $X$ the restriction $\varphi|_X\colon X\rightarrow Y$ is again surjective and we conclude that
$$\varphi_j := \varphi|_{X_j}\colon X_j\rightarrow \overline{\varphi(X_j)}=:Y_j\subseteq Y$$
is dominant for every irreducible component $X_j$. Applying~\cite{RoMe2009}*{Thm 1.1}, we obtain an open subset $U\subseteq Y_j$ such that for all $y \in U$ we have
$$
\dim(\varphi_j^{-1}(y)) 
= \dim(X_j)-\dim(Y_j) 
= \dim(X_j) - \dim(Y) + k
$$
with $k\geq 0$. Note that the latter equality holds as $Y_j\subseteq Y$ is a closed subvariety. 
Now for any $y\in \CC^{r+1}$ we have 
$$\varphi^{-1}(y) = V(T_0^{l_0}-y_0,\ldots,T_r^{l_r}-y_r)\subseteq \CC^{n+m}$$ and thus $\dim(\varphi^{-1}(y)) = n+m-(r+1)$ holds.
We conclude
$$\dim(X_j)-\dim(Y)+k = \dim(\varphi_j^{-1}(y)) \leq \dim(\varphi^{-1}(y)) = n+m-(r+1)$$
and therefore $\dim(X_j)\leq n+m-s+k$ holds, which gives the assertion.
\end{proof}

\begin{proof}[Proof of Theorem~\ref{thm:arrIntNormal}]
In order to prove this statement it suffices to consider indecomposable algebras $R(A,P_0)$.
Let $\mathcal{B}$ be a basis for $\ker(A)$
and fix $v \in \mathcal{B}$. Then the linear forms
$$f_v:= v_0T_0 + \ldots + v_r T_r \in \CC[T_0, \ldots, T_r]$$
are $\ZZ$-homogeneous with respect to the standard $\ZZ$-grading on $\CC[T_0, \ldots, T_r]$. 
In particular, applying Lemma~\ref{lem:connected} we conclude that $X := \mathrm{V}(g_v; \ v \in \mathcal{B})$ is connected.

We want to use Serre's criterion to show that
$X$ is normal and $\mathrm{I}(X) = \bangle{g_1, \ldots, g_{r-c}}$ holds. In particular, as $X$ is connected this implies that $R(A,P_0)$ is integral.
Assume $A$ to be in reduced row echelon form
as in Remark~\ref{rem:admissible} and set $A' := (a_{ij}')_{i,j} := (a_{c+1}, \ldots, a_r)$. 
Recall that the relations $g_1, \ldots, g_{r-c}$
are of the form $g_{v_1}, \ldots, g_{v_{r-c}}$,
where $v_i$ denotes the $i$-th row of the following block-matrix:
$$
\left[
\begin{array}{c|c}
     (A')^t&-E_{r-c}
\end{array}
\right]
$$
Now, set $\delta_i := \mathrm{grad}(T_i^{l_i})$
and $J_1 :=(a_{ji}'\cdot \delta_i)_{i,j}$.
Then the Jacobian of $g_1, \ldots, g_{r-c}$ is of the form
$$
J 
= 
\left[
\begin{array}{c|c}
J_1 &
\begin{array}{ccc}
-\delta_{c+1}&&\\
&\ddots&\\
&&-\delta_r
\end{array}
\end{array}
\right].
$$
Now assume $J(x)$ is not of full rank.
Then there exist at least two indices
$c+1 \leq i_1 < i_2 \leq r$ such that
$\delta_{i_k}(x) = 0$ holds.
Moreover, as the columns of $A'$ are pairwise linearly independent, we have $\delta_{i_3}(x) = 0$
for at least one more index $0 \leq i_3 \leq c$.
In particular, this implies that there exist
$1 \leq j_k \leq n_{i_k}$ such that
$x_{i_1,j_1} = x_{i_2,j_2} =x_{i_3,j_3} =0$
holds.
We conclude that any $x$ with $J(x)$ not of full rank is contained in one of the finitely many affine subvarieties of $X$ of the following form:
$$
\mathrm{V}(\tilde{f}_1(T_0^{l_0}, \ldots, T_r^{l_r}), \ldots, \tilde{f}_{r-c}(T_0^{l_0}, \ldots, T_r^{l_r}), T_{i_1,j_1}, T_{i_2,j_2},T_{i_3,j_3}),
$$
where $\tilde{f}_i := f_i(\tilde{T}_0,\ldots,\tilde{T}_r)$ with 
$\tilde{T}_{i_k}:=0$ for $k = 1,2,3$
and $\tilde{T}_i := T_i$ else.
We claim that these subvarieties are of codimension at least $2$ in $X$.
By Proposition~\ref{prop:Xpure} it suffices to show that 
$\tilde{Y} := \mathrm{V}(\tilde{f}_1, \ldots, \tilde{f}_{r-c}, T_{i_1}, T_{i_2}, T_{i_3}) \subseteq \CC^{r+1}$
is of codimension at least $r-c+2$. 
For this note that 
$$
Y':=\mathrm{V}(\tilde{f}_i; \ i \notin \left\{i_1 - c,i_2 - c\right\}) \cap \mathrm{V}(T_{i_1}, T_{i_2}, T_{i_3}) \subseteq \CC^{r+1}
$$
is irreducible and of codimension $r-c+1$.
Consider the matrix $B$
arising out of $A'$ by replacing
its $i_3$-th row with a zero row.
Then by construction for $k = 1,2$ we have 
$\tilde{f}_{i_k-c} = f_v$,
where $v =(b_{1,i_k}, \ldots, b_{c+1, i_k}, 0, \ldots, 0) \in \CC^{r+1}.$
In particular
$\tilde{f}_{i_k-c} \in I(Y')$ for $k \in \left\{1,2\right\}$ if any only if
$\tilde{f}_{i_k-c} = 0$.
We conclude that
$\tilde{Y} \subseteq \CC^{r+1}$ is of codimension at most $r-c+1$ 
if and only if $\tilde{f}_{i_1-c} = \tilde{f}_{i_2-c} = 0$ holds. This contradicts the fact that the columns of $A'$ are pairwise linearly independent.

In order to complete the proof we have to show
that the $K_0$-grading
has the desired properties.
By construction, the $K_0$-grading is effective.
Moreover, using Remark~\ref{rem:quasihom}  
we obtain a one parameter subgroup of
$H_0 := \Spec \, \CC[K_0]$ via
$$
\CC^* \rightarrow H_0,
\qquad
t \mapsto (t^{\zeta_{01}}, \ldots, t^{\zeta_{rn_r}}, t, \ldots, t),
$$
where $\zeta_{ij} = \deg(T_{ij})$ is as in (\ref{equ:degTij}) with $w_0 = \ldots = w_r =1$.
As $\zeta_{ij} > 0$ holds for all $0\leq i \leq r,$ and $1 \leq j \leq n_i$ 
we conclude that the grading is pointed.
To obtain factoriality of the $K_0$-grading,
we localize $R(A,P_0)$ by the product over all 
generators $T_{ij}$, $S_k$, and observe that the 
degree zero part of the resulting ring is 
a polynomial ring. Now applying~\cite{Be2012}*{Thm.~1.1}
completes the proof.
\end{proof}

\section{(No) Smooth special arrangement varieties with low Picard number}\label{sec:4}
In this section we investigate smoothness of arrangement varieties. Whereas smooth general arrangement varieties of complexity and Picard number two were classified in~\cite{HaHiWr2019}, we concern ourselves with the case of special arrangement varieties.

\begin{definition}
An {\em honestly special arrangement variety} is a special arrangement variety $X$ with {\em honestly special arrangement Cox ring} $R(A,P)$, i.e.\ 
we have $l_{ij}n_i > 1$ for all $i=0,\ldots,r$ and $j=1,\ldots,n_i$ and the graded ring $R(A,P)$ is
not isomorphic to a Cox ring $R(A',P')$ of a general arrangement variety.
\end{definition}

\begin{remark}
A special arrangement variety is honestly special if and only if it does not admit a torus action that turn it into a general arrangement variety.
\end{remark}

\begin{example}\label{ex:notHonest}
Consider the ring $R(A,P)$ defined by the following data
$$
A:=\left[
\begin{array}{cccc}
1&0&0&1\\
0&1&0&1\\
0&0&1&0
\end{array}
\right],\qquad
P:=\left[
\begin{array}{cccc}
-2&2&0&0\\
-2&0&2&0\\
-2&0&0&2\\
-1&1&1&1
\end{array}
\right].
$$
Then any variety $X(A,P,\Sigma)$ is a special arrangement variety, that is not honestly special:
Consider the matrix
$$A':=\left[
\begin{array}{cccc}
1&0&1\\
0&1&1\\
\end{array}
\right].$$
Then the ring $R(A,P)$ is isomorphic as a graded ring to the ring $R(A',P)$, which in turn is the Cox ring of a complexity one $\TT$-variety.
\end{example}

\begin{theorem}\label{thm:noSmoothOnes}
Let $X$ be a projective honestly special arrangement variety of Picard number at most two. Then $X$ is singular.
\end{theorem}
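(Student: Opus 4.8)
The plan is to reduce the smoothness question to a combinatorial criterion on the explicit arrangement variety $X(A,P,\Sigma)$ and then derive a contradiction from the honest-speciality hypothesis. By Proposition~\ref{prop:CoxRing} and the surrounding constructions, any projective arrangement variety is equivariantly isomorphic to some explicit $X(A,P,\Sigma)$, so I may work entirely with the combinatorial data $(A,P,\Sigma)$. Smoothness of $X$ translates into a condition on the maximal cones of $\Sigma$: each such cone must be regular, i.e.\ its primitive ray generators (columns of $P$) must form part of a lattice basis of $\ZZ^{r+s}$. The point is that smoothness of $X$ forces strong numerical constraints on the $l_{ij}$ appearing in $P_0$, and the honest-speciality assumption $l_{ij}n_i > 1$ will collide with these constraints.

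First I would set up the local analysis at the torus-fixed points of $X$ coming from the big and special cones of $\Sigma$, classified in Definition~\ref{def:Pcone}. For a Fano/projective arrangement variety there must exist at least one big cone, whose relative interior meets $\{0\}\times\QQ^s$; near such a cone the local structure of $X$ is governed by the nontrivial relations $g_i$ of~\eqref{equ:relations} together with the lattice generated by the columns of $P$ sitting over the corresponding face. Because the arrangement is \emph{special} (not general position), Proposition~\ref{prop:fiveLines} guarantees a genuine coincidence among the intersection data of the hyperplanes $H_0,\ldots,H_r$, which in turn means that the relevant $P$-cone has more than $c+1$ rays lying on $\trop(X)$ in a degenerate configuration. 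I would show that such a cone cannot be simplicial, or if simplicial cannot be regular, precisely because the exponents $l_{ij}$ with $l_{ij}n_i>1$ obstruct the columns of $P$ from completing to a lattice basis.

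The key computation is to examine the Jacobian criterion directly on $\bar X = V(g_1,\ldots,g_{r-c})$ along the lines of the proof of Theorem~\ref{thm:arrIntNormal}. There the singular locus of $\bar X$ was located at points where at least three of the gradients $\delta_i = \grad(T_i^{l_i})$ vanish, forcing three variables $T_{i_k,j_k}=0$; the honest-speciality condition $l_{ij}n_i>1$ means every block $l_i$ contributes a genuinely singular factor (either $n_i\geq 2$ giving several variables, or $l_{ij}\geq 2$ giving a nonreduced gradient). I would argue that after passing to the quotient $X = \hat X /\!\!/ H$, these singular points of $\bar X$ cannot all be removed by intersecting with $\hat Z$: at least one singular point survives in $\hat X$ and descends to a singular point of $X$, because the special position forces the three vanishing coordinates to lie in a common relevant cone of $\Sigma$. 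Concretely I expect to pick a point $x\in\hat X$ with three suitable $T_{i_k,j_k}=0$ lying over a big or special cone, verify $J(x)$ drops rank, and check it is not a quotient singularity that gets resolved by the $H$-action.

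The main obstacle will be the last step: ruling out that every a priori singular point of $\bar X$ either fails to lie in $\hat X$ (so is discarded when passing to the geometric quotient over $Z$) or maps to a point where the $H$-quotient is nonetheless smooth. This requires a careful interplay between the combinatorics of $\Sigma$ (which cones are actually present, via the leaf/special/big trichotomy of Definition~\ref{def:Pcone}) and the Cox-ring quotient presentation of Construction~\ref{constr:XAPSigma}. I anticipate that the honest-speciality hypothesis is exactly what prevents the degenerate configuration from being ``absorbed'': if $X$ were smooth, one could coarsen the data to a general-position arrangement $R(A',P')$, contradicting that $R(A,P)$ is not isomorphic to such a Cox ring. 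Making this dichotomy precise — either $X$ is singular, or the arrangement was secretly general, hence not honestly special — is the crux, and I would phrase the argument so that smoothness of a projective $X$ of Picard number at most two directly yields a decomposition of the type in Example~\ref{ex:notHonest}, completing the contradiction.
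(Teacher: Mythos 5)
Your opening reduction already contains an error: regularity of the maximal cones of $\Sigma$ is the smoothness criterion for the ambient \emph{toric} variety $Z$, not for $X \subseteq Z$. Smoothness of $X$ is governed instead by the $X$-face criterion of Remark~\ref{rem:smoothCrit}: for every $X$-face $\gamma_0$ the piece $X(\gamma_0)$ must consist of factorial points and the stratum $\bar X(\gamma_0)$ of smooth points of $\bar X$. Your second idea --- locate a Jacobian-singular stratum of $\bar X$ and show it survives in $\hat X$ --- is the right mechanism and is exactly how the paper handles Picard number one: Remark~\ref{rem:relations} exhibits an $\bar X$-face whose stratum is singular (because $R(A,P)$ has at least \emph{two} relations, which is where honest speciality enters), and Remark~\ref{rem:Pic1} guarantees that for $\varrho(X)=1$ every nonzero $\bar X$-face is an $X$-face, so the singular stratum is automatically relevant. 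Modulo replacing your toric criterion by Remark~\ref{rem:smoothCrit}, your plan would go through in that case. (Also note that Proposition~\ref{prop:fiveLines} concerns complexity two only, while the theorem allows arbitrary complexity, so it cannot serve as the source of the ``genuine coincidence'' you invoke.)

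The genuine gap is the Picard number two case, which is the bulk of the paper's proof and which you explicitly defer as ``the crux'' without supplying a mechanism. When $\varrho(X)=2$, an $\bar X$-face is an $X$-face only when $\tau_X \subseteq Q(\gamma_0)^\circ$ (Remark~\ref{rem:divClass}), so the singular strata of $\bar X$ may all sit over irrelevant faces; nothing about special position forces ``the three vanishing coordinates to lie in a common relevant cone,'' and the paper never argues this way. What the paper actually does is control \emph{where the weights lie}: it decomposes $\Eff(X) = \tp \cup \tx \cup \tm$, uses the determinant-one normal form for weights spanning two-dimensional $X$-faces of a smooth variety (Remark~\ref{rem:normForm}), establishes the constraints $n_i \le 2$, $m=0$, etc.\ of Lemma~\ref{lem:tool} via quasismoothness, and then runs a case analysis combining these with homogeneity of the relations. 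Crucially, several terminal contradictions are not of the form ``relevant face with singular stratum'' at all: one branch ends because the relation degree $(1,1)$ forces $n_i \neq 1$ for all $i$ by honesty, and another because the weight equations force $w_{11} = w_{12}$, contradicting $w_{11} \in \tp$, $w_{12} \in \tm$. Your proposed dichotomy (``smooth $\Rightarrow$ the ring decomposes as in Example~\ref{ex:notHonest}, contradicting honesty'') is a reasonable guess at the shape of the endgame, but without the weight-cone bookkeeping there is no route from smoothness to such a decomposition; as written the proposal proves the theorem only for Picard number one.
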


The rest of this section is dedicated to the proof of Theorem~\ref{thm:noSmoothOnes}. We work in the language of explicit $\TT$-varieties~\cite{HaHiWr2019}*{Sec. 3}
and import the necessary notions and facts directly from~\cite{HaHiWr2019}*{Sec. 5}.

\begin{remark}
Let $X:=X(A,P,\Sigma) \subseteq Z$ be an explicit arrangement variety with Cox ring $R(A,P)$.
We will now describe the local structure of $X$ in terms of torus orbits in $Z$. For this, denote by $T$ the acting torus of $Z$. An {\em $X$-cone} is a cone $\sigma\in\Sigma$ such that the corresponding torus orbit $T\cdot z_\sigma$ intersects $X$ non-trivially, i.e.\
$$
X(\sigma):=X\cap T\cdot z_\sigma\not = \emptyset.
$$
We call the subsets $X(\sigma)\subseteq X$, where $\sigma$ is an $X$-cone, the {\em pieces} of $X$. 
Note that the pieces of $X$ are locally closed and $X$ is the disjoint union of all of them:
$$X=\bigsqcup\limits_{X\text{-cones } \sigma} X(\sigma).$$
\end{remark}

Using Cox quotient construction for toric varieties~\cite{Co1995} or~\cite{ArDeHaLa2015}*{Sec. 2.1.3} as indicated in Construction~\ref{constr:XAPSigma}, we describe the pieces of an explicit arrangement variety in terms of faces of the positive orthant.

\begin{remark}
Let $X:=X(A,P,\Sigma) \subseteq Z$ be an explicit arrangement variety with Cox ring $R(A,P)$.
An {\em $\bar{X}$-face} is a face $\gamma_0\preceq \gamma$ of the positive orthant $\gamma:=\QQ^{n+m}_{\geq 0}$, such that the torus orbit defined by the complementary face $\gamma^*\preceq\gamma$ intersects $\bar{X}$ non-trivially, i.e.\
$$
\CC^{n+m}\supseteq \bar{X}(\gamma_0):=\bar{X}\cap T\cdot z_{\gamma_0^*}\not = \emptyset.
$$
Moreover we call $\gamma_0\preceq\gamma$ an {\em $X$-face}, if it is an $\bar{X}$-face and the projected complementary face $P(\gamma_0^*)$ is contained in $\Sigma$.
Note that $\gamma_0\preceq\gamma$ is an $X$-face if and only if the projected complementary face is an $X$-cone, i.e.\
$$
X(\gamma_0):=X(P(\gamma_0^*))\not = \emptyset.
$$
In particular, if $\gamma_0\preceq \gamma$ is an $X$-face, then $\bar{X}(\gamma_0)$ is mapped on $X(\gamma_0)$ under the quotient map $\hat{X}\rightarrow X$.
\end{remark}

\begin{remark}\label{rem:smoothCrit}
Let $X:=X(A,P,\Sigma)\subseteq Z$ be an explicit arrangement variety. Then $X$ is smooth 
if for every $X$-face $\gamma_0\preceq \gamma$, 
the piece $X(\gamma_0)$ consists of factorial points of $X$ and $\bar{X}(\gamma_0)$ consists of smooth points of $\bar{X}$.
$\QQ$-factoriality and factoriality of points of $X(\gamma_0)$ can be checked using the following criterion, see~\cite{HaHiWr2019}*{Sec. 5}:
\begin{enumerate}
\item $x\in X(\gamma_0)$ is $\QQ$-factorial if and only if the cone $Q(\gamma_0)\subseteq K_\QQ$ is of full dimension.
\item $x\in X(\gamma_0)$ is factorial if and only if the set $Q(\gamma_0\cap\ZZ^{n+m})\subseteq K$ generates $K$ as a group.
\end{enumerate}
\end{remark}

We turn to the proof of Theorem~\ref{thm:noSmoothOnes}.
In a first step we treat the case of Picard number one.

\begin{remark}
Let $X(A,P,\Sigma)$ be an honestly special arrangement variety. Then $R(A,P)$ is defined by at least two relations $g_v$.
\end{remark}

\begin{remark}\label{rem:relations}
Let $X:=X(A,P,\Sigma)$ be a projective honestly special arrangement variety.
After suitably renumbering we may assume that for the defining relations
\begin{equation}
g_t = \lambda_{t0}T_0^{l_0}+\ldots+\lambda_{tc}T_c^{l_c}+T_{t+c}^{l_{t+c}},\ \text{ where } 1\leq t\leq r-c
\end{equation}
of $R(A,P)$ there exists an index $k\in\{0,1,\ldots,c\}$ such that $\lambda_{1k} = 0$ and $\lambda_{2k}\not = 0$ holds.
In particular the face $$\gamma_0:=\cone(e_{k1},\lambda_{tk}e_{(t+c)1};\ 2\leq t\leq r-2)\preccurlyeq\gamma$$
is an $\overline{X}$-face. Moreover the corresponding stratum in $\overline{X}$ is singular due to the number of relations defining $R(A,P)$.
\end{remark}

\begin{remark}\label{rem:Pic1}
Let $X:=X(A,P,\Sigma)$ be a projective arrangement variety of Picard number one. Then every $\overline{X}$-face $\{0\}\not = \gamma_0\preccurlyeq\gamma$ is an $X$-face.
\end{remark}

\begin{proof}[Proof of Theorem~\ref{thm:noSmoothOnes} for $\varrho(X) = 1$]
Assume there is a smooth explicit honestly special arrangement variety $X := X(A,P,\Sigma)$ as in Theorem~\ref{thm:noSmoothOnes} with $\varrho(X) = 1$. Then using Remark~\ref{rem:relations} we obtain a $\overline{X}$-face with singular stratum in $\overline{X}$ which is an $X$-face due to Remark~\ref{rem:Pic1}. This contradicts smoothness of $X$ due to Remark~\ref{rem:smoothCrit}.
\end{proof}

We turn to Picard number two. In a first step we import the description of the several cones of divisor classes inside the rational divisor class group $\Cl(X)_\QQ:=\Cl(X)\otimes\QQ$ of an arrangement variety. Then, specializing to Picard number two, we adapt techniques for treating these varieties from~\cites{FaHaNi2018,HaHiWr2019}. Using these techniques, we obtain in Lemma~\ref{lem:tool} first constraints on the defining data $A,P,\Sigma$. Then we go on proving Theorem~\ref{thm:noSmoothOnes} for $\varrho(X)=2$.

\begin{remark}
Let $X(A,P,\Sigma)\subseteq Z$ be an explicit arrangement variety. Then the cones of effective, movable, semiample and the (open) cone of ample divisor classes inside $K_\QQ:=K\otimes\QQ$ are given as
$$ 
\Eff(X) 
\ = \ 
Q(\gamma), 
\qquad\qquad
\Mov(X)
\ = \ 
\bigcap_{\gamma_0 \preccurlyeq \gamma \text{ facet}}
Q(\gamma_0),
$$
$$ 
\SAmple(X)
\ = \ 
\bigcap_{X\text{-faces } \gamma_0} Q(\gamma_0),
\qquad\qquad
\Ample(X)
\ = \ 
\bigcap_{X\text{-faces } \gamma_0} Q(\gamma_0)^\circ.
$$
\end{remark}

\begin{remark}\label{rem:divClass}
Let $X := X(A,P,\Sigma)$ 
be an explicit arrangement variety
with divisor class group $K=\Cl(X)$ 
of rank two.
Then, inside the rational divisor class group $\Cl(X)_\QQ=\QQ^2$,  the effective cone of $X$ is of 
dimension two and decomposes as
$$
\Eff(X) 
\ = \ 
\tp \cup \tx \cup \tm,
$$
where $\tx \subseteq \Eff(X)$ is the 
ample cone, $\tp$, $\tm$ are closed cones 
not intersecting $\tx$ and 
$\tp \cap \tm$ consists of the origin:
\vspace{10pt}
\begin{center}
\begin{tikzpicture}[scale=0.6]
    \path[fill=gray!60!] (0,0)--(3.5,2.9)--(1.1,3.4)--(0,0);
    \path[fill, color=black] (1.5,2) circle (0.0ex)  node[]{\small{$\tx$}};
    \path[fill, color=black] (-0.25,2) circle (0.0ex)  node[]{\small{$\tp$}};
    \draw (0,0)--(1.1,3.4);
    \draw (0,0) --(-2.2,3.4);
    \path[fill, color=black] (3,1.2) circle (0.0ex)  node[]{\small{$\tm$}};
    \draw (0,0)  -- (3.5,2.9);
    \draw (0,0)  -- (4.5,0.7);
  \end{tikzpicture}.
\end{center}
Due to $\tx \subseteq \Mov(X)$,
each of the cones $\tp$ and~$\tm$ 
contains at least two of the rational weights
$$
w_{ij}:= (x_{ij},y_{ij}):= 
\deg_\QQ(T_{ij}),
\qquad
w_k := (x_k,y_k):=
\deg_\QQ(S_k).
$$
Moreover, for every $\overline{X}$-face
$\{0\} \ne \gamma_0 \preccurlyeq \gamma$
precisely one of the following 
inclusions holds:
$$
Q(\gamma_0) \ \subseteq \ \tp,
\qquad
\tx \ \subseteq \ Q(\gamma_0)^\circ,
\qquad
Q(\gamma_0) \ \subseteq \ \tm.
$$
The $X$-faces are precisely those $\overline{X}$-faces $\gamma_0\preceq \gamma$ with $\tx  \subseteq  Q(\gamma_0)^\circ$.
\end{remark}

\begin{remark}\label{rem:normForm}
In the situation of Remark~\ref{rem:divClass} consider 
a positively oriented pair $w,w'\in\QQ^2$.
If, for instance, $w \in \tau^-$ and $w' \in \tau^+$ hold,
then $\det(w,w')$ is positive.
Moreover, if the variety $X$ is smooth and $w,w'$ 
are the weights stemming from
a two-dimensional $X$-face 
$\gamma_0 \preccurlyeq \gamma$, then we have
$\det(w,w')=1$ due to Remark~\ref{rem:smoothCrit}[(ii)].
In this case, we can achieve 
$$ 
w \ = \ (1,0), 
\qquad
\qquad
w' \ = \ (0,1)
$$
by a suitable unimodular coordinate change on 
$\ZZ^2\subseteq\QQ^2$. Then $w'' = (x'',1)$ holds whenever $w,w''$ are the weights 
stemming from a two-dimensional $X$-face and, similary, 
$w'' = (1,y'')$ holds whenever $w'',w'$ are these weights.
\end{remark}

We call an explicit arrangement variety $X:=X(A,P,\Sigma)$ {\em quasismooth}, if for every $X$-face, the corresponding stratum in $\bar{X}$ is smooth.

\begin{lemma}\label{lem:tool}
Let $X:=X(A,P,\Sigma) \subseteq Z$ be a $\QQ$-factorial quasismooth projective
honestly special arrangement variety of
Picard number two.
Then the following assertions hold:
\begin{enumerate}
    \item If $m>0$ holds then all weights $w_k$ lie either in $\tau^+$ or in $\tau^-$. 
    \item If $n_i \geq 2$ holds for at least one index $0\leq i\leq r$ then $m = 0$ holds.
    \item $n_i \leq 2$ holds for all $0 \leq i \leq r$.
\end{enumerate}
\end{lemma}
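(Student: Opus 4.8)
The plan is to locate each relevant weight in the decomposition $\Eff(X)=\tau^+\cup\tau_X\cup\tau^-$ of Remark~\ref{rem:divClass} by probing small $\overline X$-faces, and to obstruct the resulting ``straddling'' faces by the singularity criterion of Remark~\ref{rem:smoothCrit}. For (i), fix $k$ and take the ray $\gamma_0:=\cone(e_k)\preccurlyeq\gamma$. As $S_k$ occurs in no relation $g_v$, the point with $S_k\neq0$ and all other coordinates zero lies in $\overline X$, so $\gamma_0$ is an $\overline X$-face; its image $Q(\gamma_0)=\cone(w_k)$ is a ray and hence cannot contain the two-dimensional cone $\tau_X$ in its interior. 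By the trichotomy of Remark~\ref{rem:divClass} only $Q(\gamma_0)\subseteq\tau^+$ or $Q(\gamma_0)\subseteq\tau^-$ survive, that is $w_k\in\tau^+\cup\tau^-$, proving (i).

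For (ii) and (iii) I would prepare two tools. First, the same face argument applied to $\cone(e_{ij})$ shows that whenever $n_i\geq2$, turning on a single $T_{ij}$ leaves the monomial $T_i^{l_i}$ and all others zero, so $\cone(e_{ij})$ is an $\overline X$-face and $w_{ij}\in\tau^+\cup\tau^-$ for all such $i,j$; moreover the rows of $P_0$ give the degree identity $\mu:=\deg(g_t)=\sum_j l_{ij}w_{ij}\in\cone(w_{i1},\ldots,w_{in_i})$ for every block $i$, so $\mu$ lies in all block cones and every \emph{singleton} block has its weight on the ray $\QQ_{\geq0}\mu$. Second, I would record a rank estimate for the Jacobian of $g_1,\ldots,g_{r-c}$ on a stratum $\overline X(\gamma_0)$: by honest specialness $l_{ij}n_i>1$ a block that is off, or on in a single variable with at least two variables off, has vanishing monomial gradient and contributes no nonzero Jacobian column; an otherwise partially-on block contributes columns all proportional to its coefficient vector, hence rank at most one; and the free variables $S_k$ contribute nothing. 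Since honest specialness forces $r-c\geq2$, any $\overline X$-face on which every relation has all its monomials vanishing, with at most one partially-on gradient-carrying block, yields Jacobian rank at most one, i.e.\ a singular point of $\overline X$; by Remark~\ref{rem:smoothCrit} such a face can never be an $X$-face of the quasismooth $X$.

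The contradiction then comes from a \emph{straddling pair}. For any $w^+\in\tau^+$ and $w^-\in\tau^-$ the cone $\cone(w^+,w^-)$ contains $\tau_X$ in its interior, so by Remark~\ref{rem:divClass} the associated face is automatically an $X$-face once it is an $\overline X$-face. For (ii) (assume $n_{i_0}\geq2$ and $m\geq1$) I pair a variable $S_k$, say with $w_k\in\tau^+$, against a weight in $\tau^-$: if the multi-variable block $i_0$ has a weight in $\tau^-$, or if some $S_{k'}$ lies in $\tau^-$, pairing it with $S_k$ gives an $\overline X$-face whose stratum has Jacobian rank at most one; otherwise all of $i_0$ and all $S_k$ lie in $\tau^+$, whence $\mu\in\tau^+$, so by the degree identity the (at least two) required weights in $\tau^-$ come neither from singleton blocks nor from the $S_k$, forcing a multi-variable block $i_1$ with a weight in $\tau^-$, which I again pair with $S_k$. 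In every case the partial block is unique and $S_k$ contributes nothing, so the stratum is singular, contradicting quasismoothness; hence $m=0$. For (iii) (so $m=0$, assume $n_{i_0}\geq3$) I use that one on-variable already makes block $i_0$ contribute nothing: if the weights of $i_0$ straddle $\tau_X$ I pair two of its variables, still partial since $n_{i_0}\geq3$; if they all lie in one wedge then $\mu$ does too, the opposite wedge's weights come from some multi-variable block $i_1$, and I pair one variable of $i_0$ with one of $i_1$. Either way the stratum has Jacobian rank at most one, the desired contradiction.

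The step I expect to be most delicate is the Jacobian rank bookkeeping together with the case distinction on the position of $\mu$: I must make sure that the straddling pair is a genuine $\overline X$-face, so that the chosen blocks stay only partially on and no relation is left with a single surviving monomial, and that the partially-on blocks really contribute total rank at most one. This is exactly where honest specialness enters twice, to degenerate off-blocks and under-filled blocks and to guarantee $r-c\geq2$, and where the degree identity $\mu\in\bigcap_i\cone(w_{i\bullet})$ is used to forbid the opposite-wedge weights from sitting on singleton blocks. The pairwise linear independence of the columns of $A$ would be invoked only as a safeguard should two gradient-carrying blocks ever be paired, but the construction above is arranged to always involve at most one such block.
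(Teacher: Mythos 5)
Your parts (ii) and (iii) follow essentially the paper's own route: you locate weights via the trichotomy of Remark~\ref{rem:divClass} applied to the one-dimensional $\overline{X}$-faces $\cone(e_k)$ and $\cone(e_{ij})$ (the latter being $\overline{X}$-faces precisely because $n_i\geq 2$ makes the monomial $T_i^{l_i}$ vanish), you observe that a pair of weights straddling $\tau_X$ produces an $X$-face, and you kill such faces by quasismoothness because the corresponding stratum of $\overline{X}$ is singular. Your Jacobian bookkeeping (off blocks and ``one on, at least two off'' blocks contribute nothing, a partially-on block contributes columns proportional to its coefficient vector and hence rank at most one, and honest specialness gives $r-c\geq 2$ relations) is a more explicit version of the paper's repeated shorthand ``singular since there are at least two relations'', and your use of the degree identity $\mu\in\bigcap_i\cone(w_{i1},\dots,w_{in_i})$ to force singleton-block weights onto the ray $\QQ_{\geq 0}\mu$ matches the paper's appeal to homogeneity of the relations. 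These two parts are complete and correct.

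The gap is in (i). The assertion, as the paper proves it and later uses it (its proof of (ii) begins ``we may assume $w_k\in\tau^+$ holds for all $k$''), is that the weights $w_k$ lie all in one and the same cone: either $w_k\in\tau^+$ for every $k$, or $w_k\in\tau^-$ for every $k$. Your argument --- $Q(\cone(e_k))$ is a ray and so cannot contain the two-dimensional $\tau_X$ in its interior --- yields only the weaker statement that each individual $w_k$ avoids $\tau_X$; it does not exclude having $w_{k_1}\in\tau^+$ and $w_{k_2}\in\tau^-$ simultaneously. Excluding that requires the straddling argument: $\cone(e_{k_1},e_{k_2})$ is an $\overline{X}$-face (all relations vanish when only $S_{k_1},S_{k_2}$ are nonzero), it is an $X$-face by the trichotomy, and its stratum is singular because every gradient vanishes there while $r-c\geq 2$; this contradicts quasismoothness. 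You in fact write exactly this argument as the case ``some $S_{k'}$ lies in $\tau^-$'' inside your proof of (ii), and it nowhere uses the hypothesis $n_{i_0}\geq 2$ of (ii), so the repair is simply to run it as part of (i). As written, however, your (i) establishes less than the statement claims; your (ii) and (iii) happen not to suffer from this only because you arranged them to be self-contained and never invoke the strong form of (i).
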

\begin{proof}
We prove (i). 
Let $m \geq 2$. As $\cone(e_k)$ is an $\overline{X}$-face for $1 \leq k \leq m$, Remark~\ref{rem:normForm} implies
$w_k\not \in \tau_X$ due to $\QQ$-factoriality of $X$.
So assume we have $w_{k_1}\in\tau^+$ and $w_{k_2}\in\tau^-$. Then $\cone(e_{k_1},e_{k_2})$ is an $X$-face with singular stratum in $\overline{X}$; a contradiction to quasismoothness of $X$.

We prove (ii).
Let $m>0$. Then we may assume that $w_k\in\tau^+$ holds for all $1\leq k\leq m$. Assume there exists a weight $w_{ij}\in\tau^-$ 
with $n_i \geq 2$. Then $\cone(e_1,e_{i1})$ is an $X$-face with singular $\overline{X}$-stratum, which contradicts quasismoothness of $X$. Thus
$w_{ij} \in \tau^+$ holds for all $i$ with $n_i \geq 2$. Due to homogeneity 
of the relations we conclude $w_{ij} \in \tau^+$ for all $i$ with $n_i=1$ and there are no weights left to lie in $\tau^-$; a contradiction due to Remark~\ref{rem:normForm}.

We prove (iii). 
Assume there exists an index $i$ with $n_i \geq 3$. Then after suitably renumbering we may assume $i=0$. We claim
that all $w_{0j}$ lie either in $\tau^+$ or in $\tau^-$.
Assume that this is not true and $w_{01}\in \tau^+$ and $w_{02}\in\tau^-$ holds.
Then the cone $\cone(e_{01},e_{02})$ is an $X$-face with singular $\overline{X}$-stratum as there are at least two relations defining $R(A,P)$; a contradiction to quasismoothness of $X$.
Thus we may assume $w_{0j} \in \tau^+$ for $j = 1,2,3$ and homogeneity of the relations implies that all weights $w_{kj}$ with $n_k =1$ or $n_k \geq 3$ lie in $\tau^+$. Using Part (ii) and Remark~\ref{rem:normForm} we conclude that there exist at least two indices $i_1,i_2$ with $n_{i_1} = n_{i_2}=2$ and after suitably renumbering we may assume that $w_{i_1 1},w_{i_2 1} \in \tau^-$ holds.
In particular, the $\overline{X}$-face $\cone(e_{i_1 1}, e_{0 1})$ is an $X$-face. As we have at least two relations defining $R(A,P)$, the corresponding stratum in $\overline{X}$ is singular; a contradiction to quasismoothness of~$X$.
\end{proof}

\begin{proof}[Proof of Theorem~\ref{thm:noSmoothOnes} for $\varrho(X) =2$]
We show that the existence of a smooth variety $X(A,P, \Sigma)$ as in the theorem leads to a contradiction in all possible cases.

Assume $n_i = 1$ holds for all $0\leq i\leq r$. Then due to homogeneity of the relations we may assume that all weights $w_{i1}$ lie in $\tau^+$. Thus due to Remark~\ref{rem:normForm} there exist at least two weights $w_1,w_2\in\tau^-$.
Due to Remark~\ref{rem:relations}
there exists an index $k\in\{0,1,\ldots,c\}$ such that
$\cone(e_1,e_{k1},\lambda_{tk}e_{(t+c)1};\ 2\leq t\leq r-2)$
is an $X$-face with singular 
stratum in $\overline{X}$; a contradiction to smoothness of~$X$.

Now, due to Lemma~\ref{lem:tool} we may assume that
$2 = n_0 \geq \ldots \geq n_r\geq 1$ and $m=0$ holds.
Due to homogeneity of the relations we may assume that all weights $w_{i1}$ with $n_i=1$ lie in $\tau^+$
and thus due to Remark~\ref{rem:normForm}
$n_0=n_1=2$ holds with $w_{01},w_{11}\in \tau^+$ and $w_{02},w_{12}\in\tau^-$.
Considering the $X$-faces $\cone(e_{01},e_{12}),\cone(e_{02},e_{11})$, quasismoothness of $X$ implies $l_{01}=l_{02}=l_{11}=l_{12} = 1$.
After suitably renumbering we may moreover assume $w_{11}\in\cone(w_{01},w_{02})$. And thus applying Remark~\ref{rem:normForm} to the $X$-face $\cone(e_{01},e_{12})$ 
turns the degree matrix of the rational degrees $Q$ into the shape
$$
Q
=
\left[ 
\begin{array}{cc|cc|c}
1 & x_{02} & x_{11} & 0 & \ldots
\\
0 & y_{02} & y_{11}&1 & \ldots
\end{array}
\right],
$$
where $x_{11}, y_{11} \geq 0$.
Applying Remark~\ref{rem:normForm} to the $X$-face $\cone(e_{11},e_{02})$ we obtain
$1 = \det(w_{11},w_{02}) = x_{11}y_{02} -x_{02}y_{11}$.
Using homogeneity of the relations we obtain
$$
y_{02}=l_{02}y_{02} = l_{11}y_{11}+l_{12} = y_{11}+1, \quad
1+x_{02}=l_{01}+l_{02}x_{02}=l_{11}x_{11} = x_{11}.
$$
This implies $x_{02} = -y_{11}$ and $y_{02} = 2 - x_{11}$, hence $1-y_{11} = x_{11}\geq 0$ and thus $0\leq y_{11} \leq 1$.
Assume $y_{11} = 0$. This turns the degree matrix of the rational degrees $Q$ into the shape
$$
Q
=
\left[ 
\begin{array}{cc|cc|c}
1 & 0 & 1 & 0 & \ldots
\\
0 & 1 & 0&1 & \ldots
\end{array}
\right].
$$
In particular, the rational degree of the relations is $(1,1)$. This implies $n_i\not = 1$ for all $0\leq i \leq r$ due to the honesty of $R(A,P)$. 
Assume there exists an index $k$ with $w_{k1},w_{k2}\in\tau^+$ then $\cone(e_{k1},e_{02}),\cone(e_{k2},e_{02})$ are $X$-faces and applying Remark~\ref{rem:normForm}
gives $w_{k1} = (1,y_{k1})$ and $w_{k2} = (1,y_{k2})$ in contradiction to
homogeneity of the relations.
Similar arguments hold for $w_{k1},w_{k2}\in\tau^-$.
Thus we may assume $w_{i1}\in\tau^+$ and $w_{i2}\in\tau^-$ for all $0\leq i\leq r$.
Due to Remark~\ref{rem:relations}
there exists an index $k\in\{0,1,\ldots,c\}$ such that $\cone(e_{k1},e_{(c+2)2})$ is an $X$-face with singular stratum in $\overline{X}$. This contradicts smoothness of $X$.
Thus we may assume $y_{11}=1$ this gives
$w_{11} = (0,1) = w_{12}$ which in turn is a contradiction to $w_{11}\in\tau^+$ and $w_{12}\in\tau^-$.
\end{proof}

\section{Toric ambient resolutions of singularities}\label{sec:5}
The purpose of this section is to prove that explicit arrangement varieties allow a toric ambient resolution of singularites:
Let $X(A,P, \Sigma) \subseteq Z$ be an explicit arrangement variety, denote by $T$ the acting torus of $Z$
and let $\varphi \colon Z' \rightarrow Z$ be a birational toric morphism. We call $\varphi$ an \emph{ambient resolution of singularities} if it maps the \emph{proper transform} $X'$, i.e.\ the closure in $Z'$ of the preimage of $X\cap T$ under $\varphi$, properly onto $X$ and $X'$ is smooth.
The existence of an ambient resolution of singularities enables us to perform the desingularization of $X$ in a purely combinatorial manner.

\begin{theorem}\label{thm:locallytoric}
Let $X := X(A,P,\Sigma) \subseteq Z$ be an explicit arrangement variety. Then $X \subseteq Z$ admits a toric ambient resolution of singularities. 
\end{theorem}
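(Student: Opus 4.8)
The plan is to construct a regular refinement $\Sigma'$ of $\Sigma$ that is compatible with the tropical variety $\trop(X)$ and then to read off smoothness of the proper transform $X' \subseteq Z'$ from the combinatorial criterion of Remark~\ref{rem:smoothCrit}. First I would recall that a proper birational toric morphism $\varphi \colon Z' \to Z$ is given by a refinement $\Sigma'$ of $\Sigma$; its rays correspond to the columns of an enlarged matrix $P'$, and the defining relations of the proper transform $X' = X(A,P',\Sigma')$ are obtained from $R(A,P)$ by stripping off, along each newly added ray $v'$, the initial form $\mathrm{in}_{v'}(g_v)$ of the relations. Thus the whole problem is controlled by the initial degenerations of the $g_v$, that is, by $\trop(X)$.

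The structural observation I would exploit is that $\overline{X} = V(g_v)$ is the preimage of the linear subspace $Y = V(f_v) \subseteq \CC^{r+1}$ under the monomial map $\varphi \colon (x_{ij},x_k) \mapsto (x_0^{l_0}, \ldots, x_r^{l_r})$ of Proposition~\ref{prop:Xpure}. Functoriality of tropicalization under this dominant monomial map identifies $\trop(X)$ with the $P$-image of the preimage of $\trop(Y)$, where $\trop(Y)$ is the Bergman fan of the matroid of $A$; concretely this yields the product shape $|\Delta| \times \QQ^s$ seen in Example~\ref{ex:sec2Lauf6}. I would then build $\Sigma'$ in two steps: (A) pass to a common refinement of $\Sigma$ and the quasifan supporting $\trop(X)$, so that $\trop(X)$ becomes a union of cones of the refined fan and every cone meets $\trop(X)$ only in faces lying inside $\trop(X)$; (B) perform a regular subdivision relative to the subfan carried by $\trop(X)$, so that all cones become regular without cutting any cone transversally to $\trop(X)$. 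Step (A) is the standard common-refinement construction and step (B) is toric resolution relative to a subfan.

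The heart of the argument is to show that for such $\Sigma'$ every stratum of $\overline{X'}$ is smooth, which by Remark~\ref{rem:smoothCrit}, together with regularity of $\Sigma'$, gives smoothness of $X'$. For a cone $\sigma \in \Sigma'$ one tests the orbit $T \cdot z_\sigma$ against $\overline{X'}$ via the initial ideal $\mathrm{in}_\sigma(I)$. If $\sigma \not\subseteq \trop(X)$, then $\mathrm{in}_\sigma(I)$ contains a monomial and the orbit is missed, so $\sigma$ contributes no stratum. If $\sigma \subseteq \trop(X)$ it lies over a cone of $\trop(Y)$, and since the initial degenerations of a linear space along its Bergman fan are ideals of coordinate subspaces, the corresponding local model of $\overline{X'}$ --- pulled back through the power map $\varphi$ and combined with regularity of $\sigma$ --- is a smooth transversal intersection with $T \cdot z_\sigma$. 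This is the precise sense in which $X'$ becomes \emph{locally toric} along the leaves, and it is exactly where the arrangement structure, rather than general position of the lines, is used; hence the argument is insensitive to whether the arrangement is general or special.

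The main obstacle I anticipate is the bookkeeping forced by the power map $\varphi$: the coordinate subspaces appearing as initial degenerations of $Y$ pull back through $x_i \mapsto x_i^{l_i}$ to unions of coordinate subspaces in the $T_{ij}$, so one must verify that after stripping the initial forms the resulting relations of $X'$ stay reduced and cut out a smooth stratum. Here the standing hypotheses on $P$ (primitive, pairwise distinct columns generating $\QQ^{r+s}$) and the block form of $P_0$ enter, guaranteeing that the relevant faces $\gamma_0 \preceq \gamma$ are $\overline{X}$-faces of the expected codimension. The second delicate point is the compatibility of steps (A) and (B): one must ensure the regular subdivision in (B) can be chosen so that it neither spoils the condition that $\trop(X)$ is a union of cones nor turns a smooth leaf stratum singular. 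This should follow because star subdivisions inside $\trop(X)$ preserve the coordinate-subspace form of the initial ideals, while subdivisions outside $\trop(X)$ only touch orbits that $\overline{X'}$ does not meet.
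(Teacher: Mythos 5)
Your overall strategy---first refine $\Sigma$ along $\trop(X)$, then subdivide to a regular fan, then certify smoothness of the proper transform combinatorially---matches the paper's two-step plan, but the step carrying the actual mathematical weight is missing, and the device you propose in its place does not work as stated. You want to apply Remark~\ref{rem:smoothCrit} to $X'\subseteq Z'$. That criterion is only available for a variety presented as an \emph{explicit} variety via its Cox ring, so you would first have to know that $\mathcal{R}(X')$ is the ring defined by the transformed relations. For proper transforms under toric modifications this identification is not automatic---it can fail, and verifying it is precisely the delicate point: in Section~\ref{sec:6} the paper has to \emph{assume} it under the name ``explicit weakly tropical resolution'' (Definition~\ref{def:tropres}), with the machinery of~\cite{HaKeLa2016} in the background. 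The paper's proof of Theorem~\ref{thm:locallytoric} deliberately avoids any statement about $\mathcal{R}(X')$: the key step (Lemma~\ref{lem:locallyToric}) establishes the \emph{semi-locally toric} property of the weakly tropical resolution---on each chart $Z'_\sigma$ the proper transform projects isomorphically onto an open subset of the affine toric variety $U(\sigma)$---by reducing to the linear embedding $\PP_c\subseteq\PP_r$, fixing the lattice-of-flats fan structure on $\trop(\PP_c)$, and running an induction that produces, for every coordinate beyond the first $c$, a push-down of a defining equation of the form $h+\lambda_k T_k$ with $\lambda_k\neq 0$; in other words, $X'$ is locally a graph over the toric chart. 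Once this purely geometric local statement is known, any further regular subdivision of the ambient fan resolves $X'$ together with $Z'$ by \cite{HiWr2018}*{Prop. 2.6}. Your proposal contains no substitute for this graph/coordinate argument.

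Two further inaccuracies compound the gap. First, the initial degenerations of a linear space along its Bergman fan are \emph{not} ideals of coordinate subspaces: a universal Gr\"obner basis is given by the circuits, so the initial ideals are generated by linear forms (partial sums of circuits); after pulling these back through the power map $x_i\mapsto x_i^{l_i}$ you still need a Jacobian/transversality argument, as in the proof of Theorem~\ref{thm:arrIntNormal}, to conclude anything about smoothness---it is not formal. Second, even granting all that, Remark~\ref{rem:smoothCrit} asks for smoothness of $\bar{X'}$ \emph{at the points of} each stratum $\bar{X'}(\gamma_0)$, not for smoothness of the stratum as a variety; conflating the two hides exactly the transversality you would have to prove. (A smaller slip: the strict transform of a relation is its shift---pull back and cancel monomial factors---not the result of ``stripping off initial forms.'')
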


In order to prove the above result we will use 
a two-step procedure for resolving singularities adapted from \cite{BeHaHuNi2016}.
In the first step we make use of tropical methods.
Fixing a suitable quasifan structure on the tropical variety enables us to show that
after the first resolution step
the variety is locally toric in a strong sense; see Definition~\ref{def:locallyToric}.
Then applying~\cite{HiWr2018}*{Thm 4.5. and Prop. 2.6}
we obtain the existence of a toric ambient resolution of $X \subseteq Z$.

Let $X:=X(A,P, \Sigma) \subseteq Z$ be an explicit arrangement variety and let $\trop(X)$ be its tropical variety endowed with a fixed quasifan-structure.
We call $X \subseteq Z$ \emph{weakly tropical} if $\Sigma$ is supported on $\trop(X)$. 
Set  $\Sigma' := \Sigma \sqcap \trop(X)$
and let $\varphi \colon Z' \rightarrow Z$ be the toric morphism corresponding to the refinement of fans $\Sigma' \rightarrow \Sigma$. Let $X'$ denote the proper transform of $X$ with respect to $\varphi$.  
We call $\varphi \colon Z' \rightarrow Z$ the \emph{weakly tropical resolution} of $X$.
Note that the weakly tropical resolution of $X$ depends on 
the choice of the quasifan structure fixed on $\trop(X)$.
Let us look at two possible choices of quasifan-structures, compare~\cite{MaSt2015}*{Chap. 4}.

\begin{remark}\label{rem:tropical}
Let $X(A,P, \Sigma) \subseteq Z$ be an explicit arrangement variety. Then, due to Construction~\ref{constr:MOQ}, the matrix $A$ gives rise to a linear embedding $\PP_c \subseteq \PP_r$.
Moreover, the projection $P_1 \colon \QQ^{r+s} \rightarrow \QQ^r$ onto the first $r$ coordinates maps $\trop(X)$ onto $\trop(Y)$ and we obtain
$$|\trop(X)| = |\trop(\PP_c \cap \TT^r)| \times \QQ^s.$$
In the following we will construct a fan structure on $\trop(Y)$ and will endow $\trop(X)$ with the corresponding quasifan structure, i.e.
$$\trop(X) = \left\{P_1^{-1}(\lambda); \ \lambda \in \trop(Y)\right\}.$$
Let $A$ be as above and denote by $\mathcal{A}$ the set of columns of $A$. 
The \emph{lattice of flats} $\mathcal{L}(A)$ is the partially ordered set of all 
subspaces of $\CC^{r+1}$ spanned by subsets of $\mathcal{A}$. Note that all maximal chains in $\mathcal{L}(A)$ have length $c+1$.
For any $S \in \mathcal{L}(A)$ denote by 
$I(S) \subseteq \left\{0, \ldots, r \right\}$ the indices 
with $a_i \in S$ and set
$e_S:= \sum_{i \in I(S)} e_i$, where $e_0 := - \sum_{i=1}^r e_i$. 
For any maximal chain $S_1 \subseteq S_2 \subseteq \ldots \subseteq S_c \subseteq \CC^{r+1}$ 
we define a cone
$\cone(e_{S_1}, \ldots, e_{S_c})$
and denote with $\Delta(\mathcal{A})$
the fan having these cones as maximal ones. 
Then due to~\cite{MaSt2015}*{Thm.\ 4.3.7}
this defines a fan structure on the tropical variety
$\trop(\PP_c)$.

Note that the tropical variety of a variety $Y \subseteq \TT^r$ defined by linear relations can be endowed with a unique coarsest fan structure, the so called Bergman-fan, see~\cite{MaSt2015}*{Chap. 4}. 
\end{remark}

\begin{construction}
\label{constr:decomp}
Let $X\subseteq Z$ be weakly tropical and let $\sigma \in \Sigma$ be any cone. 
Choose a maximal cone $\tau \in \trop(X)$ with $\sigma \preceq \tau$, set 
$N(\tau):= N\cap \lin_\QQ(\tau)$ 
and fix a decomposition $N = N(\tau) \oplus \tilde{N}$. Accordingly, we obtain a product decomposition
$$Z_\sigma \cong U(\sigma) \times \tilde\TT,$$
where $U(\sigma) := U(\sigma, \tau)$ is the affine toric variety corresponding to the cone $\sigma$ in the lattice $N(\tau)$
and $\tilde\TT $ is a torus.
\end{construction}

\begin{definition}\label{def:locallyToric}
Let $X\subseteq Z$ be weakly tropical.
Call $X\subseteq Z$ {\em semi-locally toric} if
for every maximal cone $\sigma \in \Sigma$ there exists a projection $\pi_{\sigma}$ as in Construction~\ref{constr:decomp} that maps $X_{\sigma}:=X \cap Z_{\sigma}$ isomorphically onto its image $\pi_{\sigma}(X_\sigma)$ and the latter is an open subvariety of $U(\sigma)$.
\end{definition}

\begin{lemma}\label{lem:locallyToric}
Let $X := X(A,P,\Sigma) \subseteq Z$ be an explicit arrangement variety. Then $X \subseteq Z$ admits a  semi-locally toric weakly tropical resolution.
\end{lemma}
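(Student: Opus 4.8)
The plan is to fix the canonical quasifan structure on $\trop(X)$, build the associated weakly tropical resolution, and then verify the semi-local toric property chart by chart; the construction is essentially forced, and the real work lies in the final verification. Concretely, I would endow $\trop(X)$ with the quasifan structure of Remark~\ref{rem:tropical} induced from the Bergman fan $\Delta(\mathcal{A})$ on $\trop(\PP_c)$ under the decomposition $|\trop(X)| = |\trop(\PP_c)| \times \QQ^s$. With this fixed, set $\Sigma' := \Sigma \sqcap \trop(X)$, let $\varphi\colon Z' \to Z$ be the associated toric morphism and $X' \subseteq Z'$ the proper transform. Since $\varphi$ restricts to the identity on the big torus, $X'$ agrees with $X$ there, so $\trop(X') = \trop(X)$ and $\Sigma'$ is supported on $\trop(X')$; hence $X' \subseteq Z'$ is weakly tropical by construction, and only the semi-local toric property remains.

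For that, fix a maximal cone $\sigma \in \Sigma'$. By construction of the refinement $\sigma$ lies in a unique maximal cone $\tau$ of $\trop(X)$, of dimension $\dim X = s+c$. Thus in the splitting $N = N(\tau) \oplus \tilde N$ of Construction~\ref{constr:decomp} the complement $\tilde N$ has rank $(r+s)-(s+c) = r-c = \codim_Z X$, giving $Z'_\sigma \cong U(\sigma) \times \tilde\TT$ with $\dim \tilde\TT = r-c$ and $\dim U(\sigma) = s+c = \dim X$. Proving that $\pi_\sigma$ maps $X'_\sigma$ isomorphically onto an open subset of $U(\sigma)$ therefore amounts to exhibiting $X'_\sigma$ as the \emph{graph} of a morphism from an open subset of $U(\sigma)$ into $\tilde\TT$.

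The heart of the argument is to produce this graph structure. Since the $\QQ^s$-factor splits off trivially, I would reduce to the linear space $Y = \PP_c \cap \TT^r$ and its Bergman fan: the maximal cone $\tau$ records a maximal chain of flats, and this chain singles out, for each of the $r-c$ reduced relations $g_i = \lambda_{i0}T_0^{l_0} + \cdots + \lambda_{ic}T_c^{l_c} + T_{c+i}^{l_{c+i}}$, a monomial that becomes invertible on $Z'_\sigma$; solving the $g_i$ then expresses the $r-c$ coordinates spanning $\tilde\TT$ as units in the $U(\sigma)$-coordinates, which is precisely the graph description. Conceptually this reflects that complements of hyperplane arrangements are \emph{sch\"on} in the sense of Tevelev~\cite{Te2007}, so that every initial degeneration over a maximal cone of $\trop(X)$ is a translate of the subtorus $\tilde\TT$, and the toric covering map $(x_{ij}) \mapsto (x_0^{l_0}, \ldots, x_r^{l_r})$ transports this structure from $Y$ to $X$.

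The hard part will be exactly this local computation: for a fixed chain of flats one must pin down which monomial $T_i^{l_i}$ is invertible on $Z'_\sigma$ and check that solving the relations yields regular, indeed invertible, expressions, so that $\pi_\sigma$ is a genuine isomorphism onto an \emph{open} image rather than merely a dominant or finite map. The exponents $l_{ij}$ and the possibility $n_i > 1$ require care, but the compatibility of the covering map with both the toric and the tropical structures keeps the argument parallel to the linear case; I would isolate this as the technical core and defer where possible to the sch\"on-ness of arrangement complements.
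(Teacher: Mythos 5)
Your plan is essentially the paper's proof: the paper likewise fixes the chain-of-flats structure $\Delta(\mathcal{A})$ from Remark~\ref{rem:tropical}, reduces to the linear embedding $\PP_c \subseteq \PP_r$ (via the proof of Thm.~4.5 in \cite{HiWr2018}, which plays the role of your ``transport along the covering map''), and then verifies Definition~\ref{def:locallyToric} chart by chart by exhibiting each $X_\sigma$ as a graph over an open subset of $U(\sigma)$, exactly as you propose. One caveat on your deferred ``hard part'': in the paper's execution the triangular solving proceeds by induction along a lattice basis extending the ray generators of $\tau$, and it uses push-downs of auxiliary ideal elements --- linear combinations of the defining linear forms singled out by linear dependencies among the columns of $A$ along the chain of flats --- rather than the reduced generators $g_i$ themselves, which in general cannot each be solved directly for a distinguished monomial; also, sch\"on-ness alone yields smooth initial degenerations but not the literal isomorphism onto an open subset of $U(\sigma)$ demanded by the definition, so this induction cannot be bypassed.
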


\begin{proof}
Due to~\cite{HiWr2018}*{Proof of Thm 4.5}
to show that the embedding $\PP_c \subseteq \PP_r$ defined by the matrix $A$ admits a semi-locally weakly tropical resolution. For our purposes we will endow $\trop(\PP_c)$ with the fan structure $\Delta(\mathcal{A})$ defined in Remark~\ref{rem:tropical}. 
Denote the corresponding fan of $\PP_r$ with $\Delta$.
Then $\trop(\PP_c) \sqcap \Delta = \trop(\PP_c)$ holds.
Let $\tau \in \trop(Y)$ be a maximal cone. Then $\tau \subseteq \delta$ holds for a cone $\delta \in \Delta$
and after a coordinate change we may assume $\delta = \QQ_{\geq 0}^r$.
Denote by $v_1, \ldots, v_c$ the primitive generators of the rays of $\tau$.
Then by definition of the fan structure on $\trop(Y)$ 
and after suitably renumbering we achieve
$$v_i = \sum_{j =1}^{k_i} e_j, 
\quad \text{where} \quad
1 = k_1 < k_2 < \ldots < k_c.$$
We complement the set $v_1, \ldots, v_c$ to a lattice basis of $\ZZ^{r}$ by successively adding canonical basis vectors in the following way:
Whenever $k_{i+1} > k_{i}+1$
we add $e_{k_i + 2}, \ldots, e_{k_{i+1}}$.
Moreover, we add the vectors $e_{k_c +1}, \ldots e_r$.
This gives rise to a decomposition
$\ZZ^{r}=:N = N(\tau) \oplus \tilde{N}$
as in Construction~\ref{constr:decomp}
and we obtain an isomorphism
$Z_\tau \cong U(\tau) \times \TT^{r-c} \cong \CC^{c} \times \TT^{r-c}$. On the torus this isomorphism is given by the homomorphism
$\varphi_B \colon \TT^r \rightarrow \TT^r$
defined by the matrix $B$ whose columns are the above lattice basis. 

Let $I := \bangle{f_1, \ldots ,f_{r-c}}$
be the ideal corresponding to $\TT^r \cap Y$.
For any $f \in I\cap \CC[T_1, \ldots, T_r]$ denote by $\tilde{f}$ the push-down of $f$ with respect to $\varphi_B$,
i.e., the unique $(\varphi_B)_*(f) \in \CC[T_1, \ldots T_r]$ without monomial factors such that $T^\mu\varphi_B^*((\varphi_B)_*(f))  =f$
for a $\mu \in \ZZ_{\geq0}^r$.
Then by construction we have 
$$
Y_\tau \cong \tilde{Y}_\tau := \V(\tilde{f}; \ f \in I\cap \CC[T_1, \ldots, T_r]) \subseteq \CC^c \times \TT^{r-c} \cong Z_\tau,
$$
where the isomorphism on the left hand side is the restriction of the isomorphism on the right hand side.
In order to complete the proof we need to show that the
restriction of the projection onto the first $c$ coordinates to $\tilde{Y}_\tau$ is an isomorphism onto its image and the latter is an open subset of $\CC^c$. 
We show this by proving that for any $k > c$ there exists a push-down $\tilde{f}$ of an equation $f \in I$ 
of the form
$\tilde{f} = h + \lambda_k T_k$,
where $h \in \CC[T_1, \ldots, T_{k-1}]$ and $\lambda_k \neq 0$. 

Our proof is by induction.
Let $k = c+1$.
Recall that the $(c+1)$-th column of $B$ is a canonical basis vector $e_i$ for some $1 \leq i \leq r$.
We distinguish between the following two cases:

\vspace{1mm}
\noindent
\emph{Case 1:}
There exists a ray generator $v_{j(i)}$ with
$k_{j(i)} > i$. Let $j(i)$ be minimal with this property.
By construction of $A$ we have $i = k_{j(i)-1}+2$. 
Moreover, due to the construction of the fan structure on $\trop(Y)$ the columns $a_1, \ldots, a_i$ of $A$ are linearly dependant and there exists 
a push-down $\tilde{f} = h +  \lambda_k T_k$
as claimed. 

\vspace{1mm}
\noindent
\emph{Case 2:}
We have $i = k_c+1$. Then 
the columns
$a_1, \ldots, a_{k_c}, a_i, a_0$ are linearly dependant
as $\tau$ was chosen maximal.
In particular there exists a push down
$\tilde{f} = h +  \lambda_0 + \lambda_i T_i$
with $\lambda_i \neq 0$ and $h \in \CC[T_1, \ldots, T_{k_c}]$.

\vspace{3mm}
Now assume we have proven the above 
for all $c < k \leq n$.
Consider the case $k = n+1$.
As above the $(n+1)$-th column of $B$ is a canonical basis vector $e_i$ for some $1 \leq i \leq r$
and we follow the same lines as in the induction basis:

\vspace{1mm}
\noindent
\emph{Case 1:}
There exists a ray generator $v_{j(i)}$ with
$k_{j(i)} > i$. Let $j(i)$ be minimal with this property,
i.e. we have $k_{j(i) - 1} < i < k_{j(i)}$
and there exists $\alpha \geq 2$ with
$i = k_{j(i)-1}+\alpha$.
We conclude that the columns $a_1, \ldots, a_i$ of $A$ are linearly dependant and there exists 
a push-down $\tilde{f} = h +  \lambda_k T_k$
with $h \in \CC[T_1, \ldots, T_{k-1}]$
as claimed. 

\vspace{1mm}
\noindent
\emph{Case 2:} 
We have $i > k_c$ and the existence of a push-down $\tilde{f}$ follows with exactly the same arguments as in the induction basis.
\end{proof}

\begin{proof}[Proof of Theorem~\ref{thm:locallytoric}]
Due to Lemma~\ref{lem:locallyToric}
the embedding $X\subseteq Z$
admits a semi-locally weakly tropical resolution.
Therefore the assertion follows using
\cite{HiWr2018}*{Prop. 2.6}.
\end{proof}

\section{The anticanonical complex for arrangement varieties}\label{sec:6}
In this section we investigate the anticanonical complex for arrangement varieties. Note that for general arrangement varieties it was described in~\cite{HiWr2018}.
In the first part, we recall and adapt the general construction of  anticanonical complexes to the arrangement case. In order to get an explicit description, see Proposition~\ref{prop:disc}, we will restrict ourselves to the case, where we have access to canonical divisors of the weakly tropical resolution via the theory of Cox rings, see Definition~\ref{def:tropres}.

We start by recalling the necessary facts:
Let $X$ be a normal $\QQ$-Gorenstein variety
and $\varphi \colon X' \rightarrow X$ 
a proper birational morphism with $X'$ normal. 
Then, given a canonical divisor $k_{X'}$ on $X'$
we have the ramification formula
$$
k_{X'} - \varphi^*\varphi_*k_X' = \sum a_E E
$$
where the sum on the right hand side runs over all 
prime divisors $E$ in the exceptional locus of $\varphi$.
The number 
$\mathrm{dics}_X(E) = a_E$ is called the \emph{discrepancy} of
$X$ with respect to $E$.

Now let $X:=X(A,P, \Sigma) \subseteq Z$ be an explicit arrangement variety. Then due to Lemma~\ref{lem:locallyToric} it admits a semi-locally toric weakly tropical resolution $Z'\rightarrow Z$ where $Z'$ is defined via a fan $\Sigma'$ as in the previous section. 
Applying~\cite{HiWr2018}*{Prop.3.6}, we can construct an anticanonical complex for $X$ in the following way:
For every ray $\varrho \subseteq |\Sigma'|$ there exists by assumption a proper toric morphism 
$\psi \colon Z''\rightarrow Z'$ where $Z''$ is a toric variety defined by a fan $\Sigma''$ with $\varrho \in \Sigma''$. This induces a proper birational morphism $X''\rightarrow X$, where $X''$ is the closure of the preimage of $X\cap T$ under the composition $Z''\rightarrow Z'\rightarrow Z$.
Denote by $D_{Z''}^{\varrho}$ the toric prime divisor 
corresponding to $\varrho$ and set 
$$
a_{\varrho} := \mathrm{disc}_X(D_{Z''}^{\varrho}|_{X''}).
$$
Let $v_\varrho$ denote the primitive ray generator of $\varrho$ and for $a_{\varrho} > -1$ set $v_\varrho' := \frac{1}{a_{\varrho} +1 }v_{\varrho}$.
Set
$$
\mathcal{A} := \bigcup_{\small \varrho \ \! \subseteq \ \! |\Sigma'|} \mathcal{A}_\varrho, \qquad
\mathcal{A}_{\varrho} 
:=
\begin{cases}
\conv(0, v_\varrho')
,& \text{ if } a_{\varrho} > -1
\\
\varrho,& \text{ else. } 
\end{cases}
$$
Then due to~\cite{HiWr2018}, $\mathcal{A}$ admits the structure of a polyhedral complex by setting
$$
\mathcal{A} = \bigcup_{\sigma' \in \Sigma'} (\mathcal{A} \cap \sigma')
$$
and we call $\mathcal{A}$ the \emph{anticanonical complex}
of $X \subseteq Z$.
By construction it encodes the singularity type of $X$ in the following way:

\begin{enumerate}
    \item[(i)] $X$ has at most log terminal singularities if and only if $\mathcal{A}$ is bounded. 
\item[(ii)]
$X$ has at most canonical singularities
if and only if $0$ is the only 
lattice point in the relative interior of 
$\mathcal{A}$.
\item[(iii)]
$X$ has at most terminal singularities
if and only if $0$ and the primitive generators 
of the rays of the fan of $Z$ are the only 
lattice points of~$\mathcal{A}$.
\end{enumerate}

We turn to the construction of candidates for the Cox ring of the weakly tropical resolution of an explicit arrangement variety $X:=X(A,P,\Sigma)$. 
We use the concept of toric ambient modifications presented in~\cite{HaKeLa2016}. Let $\trop(X)$ be endowed with a fixed quasifan structure and consider the toric morphism $Z_{\Sigma'} \to Z_\Sigma$ defined via the subdivision $\Sigma' = \trop(X) \sqcap \Sigma \to \Sigma$ of fans.
Denote by $P$ and $P'$ the matrices whose columns are the primitive ray generators of $\Sigma$ and $\Sigma'$.
Then the corresponding maps $P \colon \ZZ^r \to \ZZ^n$ 
and $P' \colon \ZZ^{r'} \to \ZZ^n$ 
define homomorphisms of tori
$$ 
\xymatrix{
{\TT^{r'}}
\ar[r]^{p'}
&
{\TT^{n}}
&
{\TT^{r}}
\ar[l]_{p}
}.
$$
Let $g_i \in \CC[T_1, \ldots, T_r]$
be one of the defining polynomials of $\mathcal{R}(X)= R(A,P)$. 
The push-down of $g_i$ is the unique
$p_*(g_i) \in \CC[T_1,\ldots,T_n]$ without monomial factors 
such that $T^{\mu}p^*(p_*(g_i))= g_i$ holds for some 
Laurent monomial $T^\mu \in \CC[T_1^{\pm 1}, \ldots, T_r^{\pm}]$.
The \emph{shift} of $g_i$ is the unique
$g_i' \in \CC[T_1, \ldots, T_{r'}]$ 
without monomial factors satisfying $p'_*(g_i') = p_*(g_i)$.

\begin{definition}
\label{def:tropres}
Let $X(A,P,\Sigma) \subseteq Z$
be an explicit arrangement variety
with Cox ring 
$$
\mathcal{R}(X) 
\ = \
\CC[T_{\varrho}; \; \varrho \in \Sigma^{(1)}] / \bangle{g_1,\ldots,g_s}.
$$
We call the weakly tropical resolution $X' \to X$ 
arising from a subdivision
$\Sigma \sqcap \trop(X) \to \Sigma$ 
\emph{explicit}
if $X'$ has a complete intersection Cox ring
defined by the shifts $g_i'$ of $g_i$:
$$
\mathcal{R}(X') 
\ = \ 
\CC[T_{\varrho'}; \; \varrho' \in \Sigma'^{(1)}] /\bangle{g'_1,\ldots,g'_s}.
$$ 
\end{definition}

From now on let $\trop(X)$ be endowed with any coarsening of
the quasifan structure defined in Remark~\ref{rem:tropical}. 
In this situation, if the weakly tropical resolution of $X$ is semi-locally toric, it suffices to compute the discrepancies along the divisors corresponding to the rays of $\Sigma'$ to describe the whole anticanonical complex. 
This motivates the subsequent study of the rays of $\Sigma'$.
We work in the notation of Definition~\ref{def:Pcone} and 
denote by
$e_{ij}$ resp. $e_k$ the canonical basis vectors of $\QQ^{n+m}$. We set
$$
v_{ij}:=P(e_{ij}),\qquad v_k:=P(e_k).
$$
Moreover for a fan $\Sigma$ we denote by $\Sigma^{(1)}$ its set of rays.

\begin{definition}\label{def:elemCones}
Let $X(A,P,\Sigma)\subseteq Z$ be an explicit arrangement variety and $\sigma\subseteq\QQ^{r+s}$ be a $P$-cone of special or big type. We call $\sigma$ {\em elementary}, if the following statements hold:
\begin{enumerate}
    \item For all $0\leq i\leq r$ there exists at most one index $1\leq j_i\leq n_i$ such that $v_{ij_i}$ is a primitive ray generator of $\sigma$.
    \item There is a 
    ray $\varrho$
    in $\sigma \sqcap \trop(X)$ with $\varrho \cap \sigma^\circ \neq \emptyset.$
\end{enumerate}
\end{definition}

\begin{construction}
Let $X(A,P,\Sigma)\subseteq Z$ be an explicit arrangement variety and let $\sigma\subseteq\QQ^{r+s}$ be an elementary $P$-cone.
Denote by $I$ the set of indices $i$ such that $v_{ij_i}$ is a primitive ray generator of $\sigma$ and define
$$
\ell_{\sigma,i}:=
\frac{\prod_{k\in I}l_{kj_k}}
{l_{ij_i}} \text{ for } i\in I,
\qquad
v_\sigma:=\sum_{i\in I}\ell_{\sigma,i}v_{ij_i},
\qquad
\varrho_\sigma:=\QQ_{\geq 0}\cdot v_\sigma.
$$
\end{construction}

\begin{proposition}\label{prop:rays}
Let $X:=X(A,P,\Sigma) \subseteq Z$ be an explicit arrangement variety. Then the set of rays of $\Sigma\sqcap\trop(X)$ is given as
$$
(\Sigma\sqcap\trop(X))^{(1)} =
\Sigma^{(1)}\cup\{\varrho_\sigma;\ \sigma\in\Sigma\text{ is elementary}\}.
$$
\end{proposition}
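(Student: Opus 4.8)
The plan is to analyse the refinement $\Sigma\sqcap\trop(X)$ through the projection $P_1\colon\QQ^{r+s}\to\QQ^r$ of Remark~\ref{rem:tropical}, which presents $\trop(X)$ as $P_1^{-1}(|\trop(Y)|)$ with the quasifan structure $\{P_1^{-1}(\tau);\ \tau\in\Delta(\mathcal{A})\}$. The decisive observation is that every column of $P$ projects to a multiple of a rank-one Bergman generator: reading off $P_0$ one gets $P_1(v_{ij})=l_{ij}e_i$ (with $e_0=-e_1-\cdots-e_r$) and $P_1(v_k)=0$. Since $e_i$ and $0$ lie in $|\trop(Y)|$, every column of $P$, and hence every cone of $\Sigma$, has its $P_1$-image inside $|\trop(Y)|$; thus each ray of $\Sigma$ is contained in $|\trop(X)|$ and survives undivided in the refinement. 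This already yields $\Sigma^{(1)}\subseteq(\Sigma\sqcap\trop(X))^{(1)}$, and it remains to identify the \emph{new} rays, i.e.\ those lying in the relative interior of some cone of $\Sigma$.

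For these I would set up a bijection with the elementary $P$-cones. Fix $\sigma\in\Sigma$ with index set $I$ (those $i$ carrying a ray generator $v_{ij_i}$). Then $P_1(\sigma)=\cone(e_i;\ i\in I)$ and $P_1$ kills the $S$-generators, so a ray of the refinement interior to $\sigma$ must project into $\cone(e_i;\ i\in I)^\circ$, and for a genuine new ray this image is either a ray of $\Delta(\mathcal{A})$ or, in the big case, the origin. Because the only linear dependence among $e_0,\ldots,e_r$ is the all-ones relation, the vectors $\{e_i;\ i\in I\}$ are independent for $I\subsetneq\{0,\ldots,r\}$, so such an interior ray is unique if it exists: it must be $\QQ_{\geq 0}\,e_S$ with $e_S=\sum_{i\in I}e_i$, forcing $I=I(S)$ to be a flat; the big case is the degenerate instance $I=\{0,\ldots,r\}$, $e_S=0$, with the interior ray lying in the lineality $\{0\}\times\QQ^s$. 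Moreover $e_S$ (resp.\ the lineality direction) is an honest new ray exactly when $\cone(e_i;\ i\in I)$ is not contained in $|\trop(Y)|$, i.e.\ precisely when $\sigma$ is of special or big type rather than leaf; this is the content of condition~(ii) in Definition~\ref{def:elemCones}. The computation $P_1(v_\sigma)=\bigl(\prod_{k\in I}l_{kj_k}\bigr)\sum_{i\in I}e_i=\bigl(\prod_{k\in I}l_{kj_k}\bigr)e_S$ then exhibits $v_\sigma$ as the vector of $\sigma$ mapping onto this ray, so the new interior ray is exactly $\varrho_\sigma$.

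It then remains to verify both directions. For one direction, for an elementary $\sigma$ the linear independence of $\{e_i;\ i\in I\}$ (special case) or the uniqueness of the all-ones dependence (big case) makes $P_1$ injective on $\sigma\cap P_1^{-1}(\QQ_{\geq 0}e_S)$; solving $\sum_{i\in I}c_il_{ij_i}e_i\in\QQ_{\geq 0}e_S$ forces $c_i=\lambda/l_{ij_i}$, so this intersection is the single ray $\varrho_\sigma=\QQ_{\geq 0}\sum_{i\in I}\ell_{\sigma,i}v_{ij_i}$, which is a $1$-dimensional cone of the refinement. Conversely, given a new ray $\varrho$ I would take the minimal cone $\sigma^\ast\in\Sigma$ containing it, so that $\varrho^\circ\subseteq(\sigma^\ast)^\circ$; then $\sigma^\ast$ is special or big. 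It must also satisfy condition~(i) and carry no $S$-generator, for otherwise the locus $\{v\in\sigma^\ast;\ P_1(v)\in\QQ_{\geq 0}e_S\}$ is at least two-dimensional—a doubled generator $v_{i1},v_{i2}$ produces a segment of admissible splittings of the shared coordinate, and an $S$-generator $v_k$ produces a free direction $+\QQ_{\geq0}v_k$—so no ray of the refinement could lie in $(\sigma^\ast)^\circ$. Hence $\sigma^\ast$ is elementary and $\varrho=\varrho_{\sigma^\ast}$; minimality makes the assignment injective, giving the reverse inclusion.

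The main obstacle I anticipate is the purely combinatorial input feeding the second paragraph: the statement that $\sum_{i\in I}t_ie_i\in|\trop(Y)|$ if and only if all its level sets are flats of the matroid of $A$, together with the resulting dictionary between "$\sigma$ is special (resp.\ big)'' and "$e_S$ is a new ray (resp.\ $e_S=0$)'', which separates the leaf cones (contributing no new ray) from the special and big ones. Pinning down this equivalence—using that the columns of $A$ are pairwise independent and invoking the Bergman-fan description of~\cite{MaSt2015}*{Chap.~4} for the chosen coarsening of the structure $\Delta(\mathcal{A})$—is the delicate point; once it is in place, the remainder is the linear algebra of $P_1$ sketched above.
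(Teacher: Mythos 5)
Your overall route coincides with the paper's: project along $P_1$, use the matroid/Bergman fan structure on $\trop(Y)$ to constrain the images of refinement rays, identify the unique interior ray of an elementary cone with $\varrho_\sigma$, and pass to the minimal cone of $\Sigma$ containing a given new ray for the converse. The problem is that the two places where you wave your hands are exactly where the paper invests all of its work, so the proposal has genuine gaps rather than omitted routine details. The combinatorial input you defer at the end --- that a new ray of $\Sigma\sqcap\trop(X)$ projects either to the origin or to a ray of the fixed fan structure on $\trop(\PP_c)$, and that these rays are precisely the vectors $e_S$ attached to flats --- is the content of Remark~\ref{rem:tropical} together with Lemmas~\ref{lem:imagesOfRays} and~\ref{lem:raysOfElementaries}; without it, your second paragraph has no starting point, so acknowledging it as ``the delicate point'' does not discharge it. Relatedly, your warm-up claim that ``every column of $P$, and hence every cone of $\Sigma$, has its $P_1$-image inside $|\trop(Y)|$'' is false as stated: tropical varieties are not convex, and if the claim were true every cone of $\Sigma$ would be a leaf cone and the proposition would be empty. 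What you actually need and use --- that each \emph{ray} of $\Sigma$ is supported on $\trop(X)$ --- is correct, but only because a ray is generated by a single column.

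The second gap is in your exclusion of cones violating condition~(i) of Definition~\ref{def:elemCones}. Given a doubled pair $v_{i1},v_{i2}$ in $\sigma^\ast$ and a point $x\in(\sigma^\ast)^\circ\cap\lambda$ for some $\lambda\in\trop(X)$, your redistribution produces the segment $x+t\,d$ with $d:=l_{i1}^{-1}v_{i1}-l_{i2}^{-1}v_{i2}$ inside $\sigma^\ast\cap\lambda$; but this forces $\dim(\sigma^\ast\cap\lambda)\geq 2$ only when $d$ is not proportional to $x$. Since $P_1(d)=0$, the vector $d$ lies in the lineality space of $\trop(X)$, so in the big-cone case it could a priori itself span the candidate interior ray; then your segment is radial and yields no second dimension, and the same loophole affects the $S$-generator argument. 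Ruling this degenerate case out is precisely the role of the paper's Lemma~\ref{lem:elemetaryreicht}: assuming two elementary subcones $\tau,\tau'\subseteq\sigma^\ast$ (differing in the $i$-th slot) define the same ray, one derives $v_{ij'}\in\tau$ for a generator $v_{ij'}$ of $\tau'$ not belonging to $\tau$, contradicting that $\cone(v_{ij'})$ is an extreme ray of $\sigma^\ast$ and hence of $\tau$. Until you supply an argument of this kind, the converse inclusion --- that every new ray equals $\varrho_\sigma$ for an elementary $\sigma$ --- is not established.
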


Using the above result and applying the methods developed in~\cite{HiWr2018}
we obtain the following description of the discrepancies 
along the divisors corresponding to the rays of $\Sigma'$ which leads to a full description of the anticanonical complex.

\begin{proposition}
\label{prop:disc}
Let $X:=X(A,P, \Sigma)\subseteq Z$ be a $\QQ$-Gorenstein explicit
arrangement variety
admitting a semi-locally toric
explicit weakly tropical resolution 
$Z'\rightarrow Z$ 
and let 
$\sigma = \cone(v_{ij_i}; \ i \in I) \in \Sigma$ 
be an elementary cone. 
Then the following statements hold:
\begin{enumerate}
\item
The discrepancy along the prime divisor of $X'\subseteq Z'$ 
corresponding to $\varrho_\sigma$ equals 
$c_{\sigma}^{-1}\ell_{\sigma}-1$,
where
$$
\ell_{\sigma}
:=
\sum_{i \in I} \ell_{\sigma,i} - k \cdot \prod_{i \in I} l_{ij_i}
$$
and $k$ is the number of the defining equations 
$g_i$ of the Cox ring $R(A,P)$ of $X$ with $g_i(x) = 0$
for all $x \in \V(T_{ij_i};\  i \in I)$.
\item
The ray $\varrho_\sigma$ is not contained in
the anticanonical complex $\mathcal{A}$, 
if and only if $\ell_\sigma > 0$ holds; 
in this case, $\varrho_\sigma$ leaves $\mathcal{A}$ at
$v_\sigma' = \ell_{\sigma}^{-1} v_\sigma$.
\end{enumerate}
\end{proposition}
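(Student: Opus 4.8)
The plan is to prove assertion (i) by computing the discrepancy $a_{\varrho_\sigma} := \mathrm{disc}_X(D^{\varrho_\sigma}_{Z''}|_{X''})$ explicitly, and then to obtain (ii) as a formal consequence of (i) together with the defining formula for the anticanonical complex $\mathcal{A}$. First I would pass to the refinement $\Sigma''$ from the construction of $\mathcal{A}$ in which $\varrho_\sigma$ occurs as a ray, and invoke the semi-locally toric explicit resolution hypothesis (Definition~\ref{def:tropres}, Lemma~\ref{lem:locallyToric}): this guarantees that $X'$ is the proper transform and carries the complete intersection Cox ring given by the shifts $g_i'$, so that the machinery of \cite{HiWr2018}*{Prop.~3.6} applies and the discrepancy is governed by the ambient toric datum together with the vanishing orders of the defining equations $g_i$ along the exceptional divisor.

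Concretely, I would base the computation on the toric ambient adjunction identity
$$
a_{\varrho_\sigma} \ = \ a^Z_{\varrho_\sigma} - \sum_{i=1}^{s} \ord_{\varrho_\sigma}(g_i),
$$
where $a^Z_{\varrho_\sigma}$ is the ambient toric discrepancy of $D_{\varrho_\sigma}$ over $Z$ and $\ord_{\varrho_\sigma}(g_i)$ is the vanishing order of the pullback of $g_i$ along $D_{\varrho_\sigma}$. This identity results from comparing $\mathcal{K}_{Z''} = \varphi^*\mathcal{K}_Z + a^Z_{\varrho_\sigma}D_{\varrho_\sigma} + \ldots$ with the complete intersection adjunction $\mathcal{K}_X = (\mathcal{K}_Z + \sum_i \{g_i=0\})|_X$ and tracking the exceptional part of the proper transforms of the hypersurfaces $\{g_i=0\}$. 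Writing $v_\sigma = c_\sigma v_{\varrho_\sigma}$ with $v_{\varrho_\sigma}$ primitive, the first term is purely combinatorial: the support function of $-\mathcal{K}_Z = \sum_\rho D_\rho$ takes value $1$ on each primitive ray generator and is linear on $\sigma$, whence $a^Z_{\varrho_\sigma} + 1 = c_\sigma^{-1}\sum_{i\in I}\ell_{\sigma,i}$.

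The heart of the argument, and the step I expect to be the main obstacle, is the evaluation of the orders $\ord_{\varrho_\sigma}(g_i)$. Here I would exploit the balanced shape of $v_\sigma = \sum_{i\in I}\ell_{\sigma,i}v_{ij_i}$, namely the identity $\ell_{\sigma,i}\,l_{ij_i} = \prod_{k\in I}l_{kj_k} =: L$ valid for every $i\in I$. Evaluating the support function of $\div(T_{i'}^{l_{i'}}) = \sum_j l_{i'j}D_{v_{i'j}}$ at $v_{\varrho_\sigma}$ on the cone $\sigma$ shows that each monomial $T_{i'}^{l_{i'}}$ with $i'\in I$ vanishes along $D_{\varrho_\sigma}$ to order exactly $c_\sigma^{-1}L$, since among the generators $v_{ij_i}$ of $\sigma$ only $v_{i'j_{i'}}$ contributes, with coefficient $l_{i'j_{i'}}$; whereas every monomial with $i'\notin I$ has order $0$, as none of the rays $v_{i'j}$ generates $\sigma$. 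Consequently $\ord_{\varrho_\sigma}(g_i) = c_\sigma^{-1}L$ precisely when all monomials of $g_i$ are indexed by $I$, that is, when $g_i$ vanishes on $\V(T_{ij_i};\ i\in I)$, and $\ord_{\varrho_\sigma}(g_i) = 0$ otherwise. The delicate point to verify is that this minimum is genuinely attained and the degree-$L$ leading part of the shift does not cancel; this is exactly what the \emph{no monomial factors} normalization of the shift and the complete intersection property of $X'$ (Definition~\ref{def:tropres}) secure. There being $k$ relations of the first kind, $\sum_i \ord_{\varrho_\sigma}(g_i) = k\,c_\sigma^{-1}L$, and substituting into the adjunction identity gives
$$
a_{\varrho_\sigma} \ = \ c_\sigma^{-1}\sum_{i\in I}\ell_{\sigma,i} - 1 - k\,c_\sigma^{-1}L \ = \ c_\sigma^{-1}\ell_\sigma - 1,
$$
which is assertion (i).

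Finally, assertion (ii) I would deduce by inserting (i) into the definition of $\mathcal{A}$. Since $a_{\varrho_\sigma} + 1 = c_\sigma^{-1}\ell_\sigma$ and $c_\sigma > 0$, the condition $a_{\varrho_\sigma} > -1$ characterizing rays that meet the boundary of $\mathcal{A}$ is equivalent to $\ell_\sigma > 0$; in that case $\mathcal{A}_{\varrho_\sigma} = \conv(0, v_{\varrho_\sigma}')$ with $v_{\varrho_\sigma}' = (a_{\varrho_\sigma}+1)^{-1}v_{\varrho_\sigma} = (c_\sigma^{-1}\ell_\sigma)^{-1}c_\sigma^{-1}v_\sigma = \ell_\sigma^{-1}v_\sigma = v_\sigma'$, so $\varrho_\sigma$ leaves $\mathcal{A}$ exactly at $v_\sigma'$, as claimed.
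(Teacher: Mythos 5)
Your proposal is correct --- it arrives at exactly the formulas of the proposition, and part (ii) is deduced just as in the paper --- but it takes a genuinely different route to (i). You start from the classical complete-intersection decomposition $a_{\varrho_\sigma}=a^Z_{\varrho_\sigma}-\sum_i\ord_{\varrho_\sigma}(g_i)$ and evaluate both ingredients combinatorially, whereas the paper never writes down such an identity: using semi-local toricity it constructs a toric canonical $\varphi$-family in the sense of Definition~\ref{def:toricCanonPhiFamily}, namely it chooses in each shift $g_t'$ a monomial $T_t^{l_t}$ not divisible by $T_{\varrho_\sigma}$, forms the invariant Weil divisor $D_{\varrho_\sigma}=\sum_{t}\sum_{j}l_{tj}D_{\varrho'_{tj}}-\sum_{\varrho'}D_{\varrho'}$, checks that it restricts to a canonical divisor of $X'$ (by the complete-intersection Cox ring formula) and equals $k_{Z'}$ on the chart of $\varrho_\sigma$, and then reads off the discrepancy as $-1-\bangle{u,v_{\varrho_\sigma}}$ where $\div(\chi^u)=\varphi_*D_{\varrho_\sigma}$ on $Z_\sigma$, following \cite{HiWr2018}*{Rem.~3.11}. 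The combinatorial heart is the same in both arguments: your dichotomy of vanishing orders ($c_\sigma^{-1}\prod_{i\in I}l_{ij_i}$ when every monomial of $g_i$ is indexed by $I$, equivalently $g_i$ vanishes on $\V(T_{ij_i};\,i\in I)$, and $0$ otherwise, with no cancellation since distinct monomials cannot cancel) is exactly the paper's criterion for whether the chosen monomial of a shift can be taken prime to $T_{\varrho_\sigma}$, and both count the same number $k$. The one step you must shore up is the decomposition identity itself: as written it pulls back $\mathcal{K}_Z$ and the hypersurfaces $\{g_i=0\}$ separately, but neither of these is $\QQ$-Cartier in general --- the proposition only assumes that $X$ itself is $\QQ$-Gorenstein --- so $\varphi^*\mathcal{K}_Z$ and $\varphi^*\{g_i=0\}$ are a priori undefined. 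This is repairable rather than fatal: the computation is local over $Z_\sigma$, and an elementary cone is automatically simplicial (a nontrivial relation $\sum\lambda_iv_{ij_i}=0$ can only occur for $I=\{0,\dots,r\}$, where it forces $\lambda_i=t\,\ell_{\sigma,i}$ and hence $t\,v_\sigma=0$, contradicting $v_\sigma\neq 0$, which is guaranteed by Definition~\ref{def:elemCones} together with Lemma~\ref{lem:raysOfElementaries}); thus $Z_\sigma$ is $\QQ$-factorial, every Weil divisor on it is $\QQ$-Cartier, and all three of your local computations are legitimate at once. This observation, or the paper's substitute of choosing monomial representatives so that the pushed-forward canonical divisor becomes principal on $Z_\sigma$, has to be supplied, and it is also where the hypotheses genuinely enter (the explicit resolution, so that the shifts present $\mathcal{R}(X')$ and adjunction holds upstairs, and semi-local toricity, for which the paper invokes \cite{HiWr2018}*{Lem.~3.16}). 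A final small simplification: you need not pass to an auxiliary fan $\Sigma''$, since $\varrho_\sigma$ is already a ray of $\Sigma'=\Sigma\sqcap\trop(X)$ by Proposition~\ref{prop:rays}.
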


The rest of this section is dedicated to the proofs of 
Propositions~\ref{prop:rays} and~\ref{prop:disc}.
In the following let $X(A,P, \Sigma) \subseteq Z$ be an explicit special arrangement variety
with an $(c+1) \times (r+1)$-matrix $A$. Let 
$P_1 \colon \QQ^{r+s} \rightarrow \QQ^r$
denote the projection onto the first $r$ coordinates
and
$\Delta:= \Sigma_{\PP_r}$ the fan corresponding to $\PP_r$. Note that in this situation
$P_1$ maps the rays of $\Sigma$ onto the rays of $\Delta. $

\begin{lemma}\label{lem:imagesOfRays}
Let $X(A,P, \Sigma) \subseteq Z$ be an explicit arrangement variety with maximal orbit quotient $\PP_c \subseteq \PP_r$ as in Construction~\ref{constr:MOQ}.
Then any ray of $\Sigma \sqcap \trop(X)$
is either projected onto the origin or onto a ray
of $\Delta \sqcap \trop(\PP_c)$.
\end{lemma}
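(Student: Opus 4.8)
The plan is to reduce everything to the compatibility recorded in Remark~\ref{rem:tropical}: the quasifan cones of $\trop(X)$ are precisely the preimages $P_1^{-1}(\lambda)$ of the cones $\lambda$ of the chosen fan structure on $\trop(Y)=\trop(\PP_c)$, while $\Delta\sqcap\trop(\PP_c)$ is the corresponding common refinement downstairs. First I would take an arbitrary ray $\varrho$ of $\Sigma\sqcap\trop(X)$ with primitive generator $v$, and let $\sigma\in\Sigma$ and $P_1^{-1}(\lambda)$, with $\lambda\in\trop(Y)$, be the unique cones of the two fans having $v$ in their relative interiors. Since $\relint(\sigma)\cap\relint(P_1^{-1}(\lambda))\ne\emptyset$, the standard identity $\relint(C\cap D)=\relint(C)\cap\relint(D)$ for convex cones shows that the cell of the common refinement containing $v$ in its relative interior is $\sigma\cap P_1^{-1}(\lambda)$; one-dimensionality of $\varrho$ then forces $\varrho=\sigma\cap P_1^{-1}(\lambda)$.

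The key computation is to identify $P_1(\varrho)$. A short double inclusion, using surjectivity of $P_1$, gives $P_1(\sigma\cap P_1^{-1}(\lambda))=P_1(\sigma)\cap\lambda$, and from $\relint(P_1^{-1}(\lambda))=P_1^{-1}(\relint(\lambda))$ I obtain that $w:=P_1(v)$ lies in $\relint(\lambda)$. Reading off the columns of $P_0$ as in Remark~\ref{rem:tropical}, every generator satisfies $P_1(v_{ij})=l_{ij}e_i$ and $P_1(v_k)=0$, where $e_0:=-\sum_{i=1}^r e_i$. Hence $P_1(\sigma)=\cone(e_i;\ i\in I)$ for the set $I$ of indices $i$ for which some $v_{ij}$ is a ray generator of $\sigma$, and consequently $P_1(\varrho)=\cone(e_i;\ i\in I)\cap\lambda$.

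I would then finish by cases. If $w=0$, the ray projects onto the origin, which is the first alternative. If $w\ne0$, then $P_1(\varrho)=\QQ_{\geq 0}w$ is genuinely one-dimensional. When $I\ne\{0,\dots,r\}$ the cone $\cone(e_i;\ i\in I)$ is a cone $\delta$ of $\Delta$, so $P_1(\varrho)=\delta\cap\lambda$ is by definition a one-dimensional cone of $\Delta\sqcap\trop(\PP_c)$, i.e.\ a ray. The remaining subcase $I=\{0,\dots,r\}$ is the one I expect to be the main obstacle, since there $\cone(e_i;\ i\in I)=\QQ^r$ is not a cone of $\Delta$ (this is exactly the big-cone situation), and one only gets $P_1(\varrho)=\lambda$. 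I would handle it by a relative-interior argument: here $\lambda=\QQ_{\geq 0}w$ is a ray of $\trop(Y)$ with $w\in\relint(\lambda)$, and the cell of $\Delta\sqcap\trop(\PP_c)$ carrying $w$ in its relative interior is $\delta_w\cap\lambda$, where $\delta_w$ is the minimal cone of $\Delta$ with $w\in\relint(\delta_w)$; since $\{0\}\ne\delta_w\cap\lambda\subseteq\lambda$ and $\dim\lambda=1$, this cell equals $\lambda$, so $\lambda$ is a ray of $\Delta\sqcap\trop(\PP_c)$. In either subcase $P_1(\varrho)$ is a ray of $\Delta\sqcap\trop(\PP_c)$, which completes the argument.
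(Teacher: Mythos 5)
Your proof is correct and takes essentially the same route as the paper: both realize the ray as an intersection $\sigma\cap P_1^{-1}(\lambda)$ with $\lambda\in\trop(Y)$, use the preimage structure of the quasifan $\trop(X)$ to get $P_1(\varrho)=P_1(\sigma)\cap\lambda$, and conclude from the fact that this image is of dimension at most one. Your explicit treatment of the case $I=\{0,\dots,r\}$ via the minimal cone $\delta_w$ is just a hands-on version of the paper's appeal to the completeness of $\Delta$.
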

\begin{proof}
Let $\varrho$ be any ray of $\Sigma \sqcap \trop(X)$.
Then there exist cones $\sigma \in \Sigma$ and $\tau \in \trop(X)$ such that $\varrho = \sigma \cap \tau$. 
By construction of the quasifan structure on $\trop(X)$ 
we have 
$\tau = P_1(\tau) \times \QQ^s$ with a cone $P_1(\tau) \in \trop(Y)$. Therefore, we have
$$
P_1(\varrho) = P_1(\sigma \cap \tau) = P_1(\sigma) \cap P_1(\tau) = \cup_{\delta \in D} (\delta \cap P_1(\tau)),
$$
where $D$ is a subset of $\Delta$ and the last equality follows as $\Delta$ is complete.
As $P_1(\varrho)$ is of dimension at most one, we conclude that there exists a $\delta \in D$ with
$P_1(\varrho) = \delta \cap P_1(\tau)$ and the assertion follows.
\end{proof}

Let $B$ denote the matrix whose columns are the primitive ray generators of $\PP_r$. Then we have $\PP_c = X(A,B,\Delta) \subseteq  \PP_r$ and we may use the notions of Definitions~\ref{def:Pcone} and~\ref{def:elemCones}.

\begin{lemma}\label{lem:raysOfElementaries}
Let $\sigma\in\Sigma$ be any special cone.
Then the following statements hold:
\begin{enumerate}
\item 
If $\delta:= P_1(\sigma)$
is an elementary $B$-cone, then we have $P_1(\varrho) = \varrho_{\delta}$ for all $\varrho\in(\sigma\sqcap\trop(X))^{(1)}\setminus\sigma^{(1)}$.
\item
If $\sigma$ is elementary
and $\varrho$ is a ray in $\sigma \sqcap \trop(X)$ with $\varrho \cap \sigma^\circ \neq \emptyset$. Then
$\varrho = \varrho_\sigma$ holds.
\end{enumerate}
\end{lemma}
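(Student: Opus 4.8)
The plan is to push everything through the projection $P_1\colon\QQ^{r+s}\to\QQ^r$ and to reduce both assertions to a computation in the Bergman fan $\trop(\PP_c)$, endowed with the fan structure of Remark~\ref{rem:tropical}. The starting observation is the explicit shape of $P_0$ in Construction~\ref{constr:RAP0}: the $i$-th block of columns gives $P_1(v_{ij})=l_{ij}\bar e_i$, where $\bar e_i$ is the $i$-th primitive generator of the fan of $\PP_r$, that is $\bar e_i=e_i$ for $i\ge 1$ and $\bar e_0=-\sum_{k=1}^r e_k$. Since for an elementary cone the product $\ell_{\sigma,i}\,l_{ij_i}=\prod_{k\in I}l_{kj_k}$ does not depend on $i$, I obtain
$$
P_1(v_\sigma)\ =\ \sum_{i\in I}\ell_{\sigma,i}\,l_{ij_i}\,\bar e_i\ =\ \Big(\prod_{k\in I}l_{kj_k}\Big)\sum_{i\in I}\bar e_i\ =\ \Big(\prod_{k\in I}l_{kj_k}\Big)\,v_\delta ,
$$
hence $P_1(\varrho_\sigma)=\varrho_\delta$. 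So it suffices to control the new rays of $\sigma\sqcap\trop(X)$ after projecting, and to recognise $\varrho_\delta$ as the relevant ray of $\delta\sqcap\trop(\PP_c)$.

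For assertion (ii) I would first show that an elementary \emph{special} cone has no $S_k$-generator. Writing $\sigma=\sigma'+\sigma_K$ with $\sigma'=\cone(v_{ij_i};\,i\in I)$ and $\sigma_K=\cone(v_k;\,k\in K)$, the inclusion $\sigma_K\subseteq\{0\}\times\QQ^s\subseteq\trop(X)$ and the fact that the tropical condition only involves $P_1$ give $\sigma\cap\trop(X)=(\sigma'\cap\trop(X))+\sigma_K$. If $K\neq\emptyset$, every ray of this prism lies in the proper face $\sigma'$ or in $\sigma_K$, so none meets $\sigma^\circ$, contradicting Definition~\ref{def:elemCones}(ii). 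Thus $K=\emptyset$, whence $v_\sigma\in\sigma^\circ$, and the linear isomorphism $\alpha\mapsto\sum_{i\in I}\alpha_i l_{ij_i}\bar e_i$ carries the rays of $\sigma\sqcap\trop(X)$ meeting $\sigma^\circ$ bijectively onto the rays of $\delta\sqcap\trop(\PP_c)$ meeting $\delta^\circ$. It then remains to prove that $\varrho_\delta$ is the unique ray of $\delta\sqcap\trop(\PP_c)$ meeting $\delta^\circ$.

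This last statement is the heart of the proof and the step I expect to be the main obstacle. The plan is to exploit the unique linear relation $\sum_{i=0}^r\bar e_i=0$ among the generators of $\PP_r$ together with the lattice-of-flats description of $\trop(\PP_c)$. Given $w=\sum_{i\in I}t_i\bar e_i\in\delta^\circ\cap\trop(\PP_c)$ with all $t_i>0$, write $w=\sum_j\lambda_j e_{S_j}$ in the relative interior of a cone $\cone(e_{S_1},\ldots,e_{S_p})$ attached to a flag of flats $S_1\subsetneq\cdots\subsetneq S_p$. Comparing the two expressions modulo $\sum_{i=0}^r\bar e_i=0$ forces the coefficient difference to be a constant; since $\lambda_j>0$ and $S_p$ is a proper flat, this constant vanishes, giving $I=I(S_p)$ and, in particular, that the whole cone $\cone(e_{S_1},\ldots,e_{S_p})$ already lies in $\delta$. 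Hence $w$ spans a ray of $\delta\sqcap\trop(\PP_c)$ only for a one-term flag, i.e.\ $w\in\QQ_{\ge0}\,e_{S_p}=\varrho_\delta$. Pulling this back through the isomorphism above yields $\varrho=\varrho_\sigma$, which is (ii).

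For assertion (i) I would combine this with Lemma~\ref{lem:imagesOfRays}: each new ray $\varrho$ of $\sigma\sqcap\trop(X)$ is projected either to the origin or to a ray of $\Delta\sqcap\trop(\PP_c)$. Projection to the origin is impossible, since the only rays of $\sigma\sqcap\trop(X)$ inside $\ker P_1=\{0\}\times\QQ^s$ are the old rays $\QQ_{\ge0}v_k$. In the remaining case $P_1(\varrho)\subseteq\delta$ is a ray of $\delta\sqcap\trop(\PP_c)$, and the coefficient comparison above—applied to $\delta$ and its faces, where elementarity of $\delta$ guarantees that the associated flat admits no intermediate proper subflat and hence that $\varrho_\delta$ is the only new ray—forces $P_1(\varrho)=\varrho_\delta$.
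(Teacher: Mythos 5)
Your part (ii) is essentially sound, but part (i) rests on a claim that is false in general: that elementarity of $\delta$ ``guarantees that the associated flat admits no intermediate proper subflat and hence that $\varrho_\delta$ is the only new ray''. Elementarity of $\delta$ says exactly that the index set $I$ of its generators is a flat of the arrangement (this is what your own coefficient comparison shows); it does not prevent $I$ from properly containing smaller flats with at least two elements. This cannot happen for $c=2$ (any two columns of $A$ already span a rank-two flat, which is why the claim looks plausible), but it fails from $c=3$ on. Concretely, take $c=3$, $r=5$ and columns $a_0=(0,0,0,1)$, $a_1=(1,0,0,0)$, $a_2=(0,1,0,0)$, $a_3=(0,0,1,0)$, $a_4=(1,1,0,0)$, $a_5=(1,1,1,0)$: then $I_0=\{1,2,4\}$ and $I=\{1,2,3,4,5\}$ are both flats in $\mathcal{L}(A)$ with $I_0\subsetneq I$. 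Writing $b_0,\ldots,b_5$ for the primitive ray generators of the fan of $\PP_5$, the cone $\delta=\cone(b_i;\ i\in I)$ is an elementary $B$-cone, its face $\delta_0=\cone(b_i;\ i\in I_0)$ is elementary as well, and $\varrho_{\delta_0}=\QQ_{\geq0}(b_1+b_2+b_4)$ is a ray of $\delta\sqcap\trop(\PP_3)$ different from $\varrho_\delta$. Upstairs, if $\sigma=\cone(v_{ij_i};\ i\in I)$ is a special $P$-cone with $P_1(\sigma)=\delta$, then $\sigma\cap P_1^{-1}(\varrho_{\delta_0})$ is a one-dimensional cone of $\sigma\sqcap\trop(X)$, is not an extremal ray of $\sigma$, and projects onto $\varrho_{\delta_0}\neq\varrho_\delta$. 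So the step you flagged as the place where elementarity enters actually fails, and it cannot be repaired: the example shows that assertion (i), read literally for \emph{all} rays in $(\sigma\sqcap\trop(X))^{(1)}\setminus\sigma^{(1)}$, is itself false for $c\geq 3$. The paper's own proof of (i) silently considers only rays $\varrho$ with $\varrho\cap\sigma^\circ\neq\emptyset$, and that restricted statement is the only one ever used (in the proof of (ii) and of Proposition~\ref{prop:rays}).

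For rays meeting $\sigma^\circ$ your machinery does close the argument, and there your route genuinely differs from the paper's: $P_1(\varrho)$ is a ray of $\Delta\sqcap\trop(\PP_c)$ by Lemma~\ref{lem:imagesOfRays}, it meets $\delta^\circ=P_1(\sigma^\circ)$, and your coefficient comparison in the lattice of flats identifies it with $\varrho_\delta$. That comparison is a self-contained proof of the fact the paper merely cites as a property of the matroid fan structure, namely that at most one ray of the refinement meets the relative interior of each cone and that this ray is $\varrho_\delta$; you also prove (ii) directly rather than deducing it from (i), and you are more careful than the paper in ruling out $S_k$-generators of an elementary special cone. Two details in that part still need justification: that $\sigma'=\cone(v_{ij_i};\ i\in I)$ is a \emph{face} of $\sigma$, and that your comparison map is a linear isomorphism. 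Both follow from the observation that a special cone cannot have generators from every block $i=0,\ldots,r$ --- otherwise a suitable positive combination of all its generators would lie in $\{0\}\times\QQ^s$, making $\sigma$ big --- so that the vectors $b_i$, $i\in I$, are linearly independent and $\lin(\sigma')\cap(\{0\}\times\QQ^s)=0$. With that observation added, and with (i) restated for rays meeting $\sigma^\circ$ only, your proof is correct.
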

\begin{proof}
We prove (i).
Let $\varrho \in \sigma \sqcap \trop(X)$
be any ray with $\varrho \cap \sigma^\circ \neq \emptyset.$ Then there exists $\tau \in \mathrm{trop}(X)$ such that $\varrho = \sigma \cap \tau$ holds.
As $\sigma$ is special we have $\varrho \not \subseteq \lambda_{\mathrm{lin}}$.
Therefore, applying Lemma~\ref{lem:imagesOfRays} yields 
$$
P_1(\sigma \cap \tau) = P_1(\varrho) \in (\Delta')^{(1)}.
$$
Using 
$\emptyset \neq \varrho \cap \sigma^\circ  = \varrho \setminus \left\{0\right\}$
we conclude
$P_1(\sigma)^\circ \cap P_1(\varrho) \neq \emptyset$.
As the Bergman fan structure fixed on $\trop(Y)$ coarsens the matroid fan structure 
defined in Remark~\ref{rem:tropical} 
we obtain that
for every cone of $\delta \in \Delta$ 
there exists at most one ray 
$\varrho' \in \Delta'$ such that
$\varrho' \cap \delta^\circ \neq \emptyset$
and this ray equals $\varrho_{\delta}$.

We prove (ii).
As $\sigma$ is elementary, the projection $P_1(\sigma)$ is an
elementary $B$-cone.
As $\varrho \cap \sigma^\circ \neq \emptyset$ we conclude $\varrho \not \in \sigma^{(1)}$. 
Therefore we may apply (i) and obtain $P_1(\varrho) = \varrho_\delta.$
Due to the structure of $\sigma$ there is exactly
one ray in $\varrho_\delta \times \QQ^s$
and this ray equals $\varrho_\sigma$. This completes the proof.
\end{proof}

\begin{lemma}\label{lem:elemetaryreicht}
Let $\sigma \in \Sigma$ be any special or big cone.
If $\varrho_{\sigma_1}=\varrho_{\sigma_2}$ holds for any two elementary $P$-cones $\sigma_1,\sigma_2\subseteq\sigma$, then $\sigma$ is elementary.
\end{lemma}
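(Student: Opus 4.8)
The plan is to verify, under the stated hypothesis, the two defining conditions of Definition~\ref{def:elemCones} for $\sigma$. First I would fix coordinates: write each column of $P$ as $v_{ij}=(w_{ij},d_{ij})$ with $w_{ij}\in\QQ^r$ the $P_0$-part and $d_{ij}\in\QQ^s$ the $d$-part. From the shape of $P_0$ one reads off the key identity $\tfrac{1}{l_{ij}}w_{ij}=\hat e_i$, where $\hat e_i=e_i$ for $i\geq 1$ and $\hat e_0=-(e_1+\ldots+e_r)$; in particular $\tfrac{1}{l_{ij}}w_{ij}$ is independent of $j$. Consequently, for any elementary $P$-cone $\tau\subseteq\sigma$ with block-index set $I$ and chosen generators $v_{ij_i}$ one has $\varrho_\tau=\QQ_{\geq 0}\,u_\tau$ with $u_\tau:=\sum_{i\in I}\tfrac{1}{l_{ij_i}}v_{ij_i}$, whose $P_1$-image equals $e_S:=\sum_{i\in I}\hat e_i$ and thus depends only on $I$, while its $\QQ^s$-part $\sum_{i\in I}\tfrac{1}{l_{ij_i}}d_{ij_i}$ records the choice of the $j_i$.

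For condition~(i) I would show that no block contributes two rays to $\sigma$. Suppose to the contrary that $v_{i_0 j},v_{i_0 j'}\in\sigma^{(1)}$ with $j\neq j'$. Projecting by $P_1$ and using the matroid fan structure on $\trop(\PP_c)$ from Remark~\ref{rem:tropical}, I would select an elementary $B$-subcone $\delta_0\subseteq P_1(\sigma)$ whose block-index set $I'\ni i_0$ is a \emph{proper} flat, so that $e_S\neq 0$; lifting $\delta_0$ by choosing one generator of $\sigma$ over each ray and invoking Lemma~\ref{lem:raysOfElementaries} then yields an elementary $P$-subcone. Running this once with $v_{i_0 j}$ and once with $v_{i_0 j'}$, keeping the choices on $I'\setminus\{i_0\}$ identical, produces elementary subcones $\sigma_1,\sigma_2\subseteq\sigma$. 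By hypothesis $\varrho_{\sigma_1}=\varrho_{\sigma_2}$, so $u_{\sigma_1}=\lambda\,u_{\sigma_2}$ for some $\lambda>0$; applying $P_1$ gives $e_S=\lambda e_S$, hence $\lambda=1$ since $e_S\neq 0$, and therefore $\tfrac{1}{l_{i_0 j}}v_{i_0 j}=\tfrac{1}{l_{i_0 j'}}v_{i_0 j'}$. As the columns of $P$ are primitive and pairwise distinct, this forces $v_{i_0 j}=v_{i_0 j'}$, a contradiction. Thus condition~(i) holds and $\sigma$ has exactly one generator per block, with index set $I$ say.

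It remains to establish condition~(ii), that some ray of $\sigma\sqcap\trop(X)$ meets $\sigma^\circ$. Since $\sigma$ is special or big we have $\sigma^\circ\cap|\trop(X)|\neq\emptyset$, so $\delta:=P_1(\sigma)$ meets $\trop(\PP_c)$ in its interior. Projecting the hypothesis through $P_1$, the analogous uniqueness property holds among the elementary $B$-subcones of $\delta$, whence $\delta$ is an elementary $B$-cone by the matroid (Bergman fan) case. Lemma~\ref{lem:raysOfElementaries}~(i) then applies: every ray of $\sigma\sqcap\trop(X)$ lying outside $\sigma^{(1)}$ projects into $\varrho_\delta$, and the hypothesis collapses all of them to a single ray $\varrho_0:=\varrho_{\sigma_1}$ coming from any elementary subcone $\sigma_1\subseteq\sigma$. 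Because $\sigma^\circ\cap\trop(X)\neq\emptyset$ forces at least one such new ray to pass through $\sigma^\circ$, that ray can only be $\varrho_0$; hence $\varrho_0=\varrho_\sigma$ meets $\sigma^\circ$, and $\sigma$ is elementary.

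The step I expect to be the main obstacle is the construction of the auxiliary elementary subcones with a prescribed projection, i.e.\ genuinely realising the passage from $\sigma$ to $\delta=P_1(\sigma)$ and back through Lemma~\ref{lem:raysOfElementaries} and the fan structure of Remark~\ref{rem:tropical}. In particular one must organise the leaf/special/big trichotomy for the projected cones and treat separately the degenerate big case $e_S=0$, in which the $\QQ^r$-projection argument for condition~(i) degenerates; there one instead compares the $\QQ^s$-parts $\sum_{i}\tfrac{1}{l_{ij_i}}d_{ij_i}$ of the two candidate directions directly, again closing the argument by primitivity and distinctness of the columns of $P$.
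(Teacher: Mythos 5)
Your argument for condition~(i) of Definition~\ref{def:elemCones} hinges on normalizing the proportionality factor: from $\varrho_{\sigma_1}=\varrho_{\sigma_2}$ you only get $u_{\sigma_1}=\lambda u_{\sigma_2}$ for some $\lambda>0$, and you force $\lambda=1$ by applying $P_1$ and using $e_S\neq 0$. The genuine gap is that elementary subcones with $e_S\neq 0$ need not exist, so your ``degenerate case'' $e_S=0$ is not a boundary case but the unavoidable one. Indeed, for an elementary $P$-cone $\tau\subseteq\sigma$ with index set $I'$ one has $v_\tau\in|\trop(X)|$, hence $e_{I'}\in|\trop(\PP_c)|$, which forces $I'$ to be a flat; and whenever $\cone(\hat e_i;\, i\in I')\subseteq|\trop(\PP_c)|$ the lifted cone is a leaf cone and therefore not elementary. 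Consequently: in complexity one there are no special elementary cones at all; for a general-position arrangement (uniform matroid) every proper flat yields a leaf cone; and even in special position the block $i_0$ carrying two rays of $\sigma$ may lie on no suitable flat (take five lines of which only $H_0,H_1,H_2$ are concurrent and $i_0=3$: every proper flat through $3$ has two elements and gives a leaf cone). In all these situations every elementary subcone of $\sigma$ meeting block $i_0$ is big with full index set, so $u_{\sigma_1},u_{\sigma_2}\in\{0\}\times\QQ^s$ and nothing pins down $\lambda$; primitivity and distinctness of the columns of $P$ only help if $\lambda=1$. For $\lambda\neq 1$ the relation expresses $v_{i_0j}$ (or $v_{i_0j'}$) as a strictly positive combination of \emph{all} generators of the other subcone, and the contradiction must come from convex geometry: $\cone(v_{i_0j})$ is an extremal ray of $\sigma$, hence a face of any subcone of $\sigma$ containing it, so it cannot lie in that subcone's relative interior. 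This extremality step is exactly how the paper argues: it compares two subcones with index set equal to the full set of blocks meeting $\sigma$, differing in a single block $t$, arranges (after a WLOG ordering of the scaled exponents) that the rearranged identity has nonnegative coefficients, concludes $v_{tj_t'}\in\tau$, and contradicts extremality of $\cone(v_{tj_t'})$ in $\sigma$. Without this ingredient your proof of (i) is incomplete; and once you add it, the projection-and-flat machinery becomes superfluous.

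Two further points. Your treatment of condition~(ii) is circular: you invoke ``the matroid (Bergman fan) case'' of precisely the statement being proved in order to conclude that $P_1(\sigma)$ is elementary. Moreover the final inference --- that $\sigma^\circ\cap|\trop(X)|\neq\emptyset$ forces some \emph{ray} of $\sigma\sqcap\trop(X)$ to meet $\sigma^\circ$ --- is false for general cones: a cone of the intersection can meet $\sigma^\circ$ while all of its rays lie on the boundary of $\sigma$ (already $\sigma=\cone(e_1,e_2)\subseteq\QQ^2$ with intersection equal to $\sigma$ shows this); this is in fact the whole content of condition~(ii). The paper does not attempt to derive (ii) inside this lemma: in its applications in the proof of Proposition~\ref{prop:rays}, a ray of $\Sigma\sqcap\trop(X)$ meeting $\sigma^\circ$ is part of the given situation, and only condition~(i) is extracted from the hypothesis, via the extremality argument described above.
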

\begin{proof}
Assume $\sigma$ is not elementary and denote by $I$ the set of indices $i$ such that there exists at least one index $1 \leq k \leq n_i$ with $\cone(v_{ik}) \in \sigma^{(1)}.$ Then there exists $t \in I$ and cones
$$
\tau
\ = \
\cone(v_{ij_i}; \  i \in I)
\ \subseteq \
\sigma_0,
\qquad
\tau'
\ = \
\cone(v_{ij'_i}; \  i \in I)
\ \subseteq \
\sigma_0
$$
with $j_t \ne j'_t$ and
$j_i = j'_i$ for all $i \neq t$. 
In particular we have $\tau \neq \tau'$.
Consider $v_{\tau}$ and $v_{\tau'}$ and 
denote by $c_\tau$ and $c_{\tau'}$ the
respective greatest common divisors of their entries.
Here, we may assume that
$c_{\tau}^{-1}l_{tj_t} \ge c_{\tau'}^{-1}l_{tj_t'}$ holds.
Moreover, as $\varrho_\tau = \varrho_{\tau'}$ holds, we have
$c_\tau^{-1} v_\tau = c_{\tau'}^{-1}v_{\tau'}$.
We conclude
$$
c_{\tau'}^{-1}\ell_{\tau',t} v_{tj_t'}
\ = \
c_{\tau}^{-1}\ell_{\tau,k} v_{tj_t}
+
\sum_{i \in I, i \ne t}
(c_{\tau}^{-1}\ell_{\tau,i} - c_{\tau'}^{-1}\ell_{\tau',i}) v_{ij_i}
$$
and
$c_{\tau}^{-1}\ell_{\tau,i} \ge c_{\tau'}^{-1}\ell_{\tau',i}$ holds for all $1 \le i \le r$.
This implies $v_{tj_t'} \in \tau$. 
But as $\cone(v_{tj_t'})$
is an extremal ray of $\sigma_0$ and
$\tau' \subseteq \sigma_0$ holds,
$\cone(v_{tj_t'})$ is also an extremal ray of $\tau$.
This contradicts the choice of $j_t'$.
\end{proof}
\begin{proof}[Proof of Proposition~\ref{prop:disc}]
We show "$\subseteq$". Let $\varrho$ be any ray of $\Sigma \sqcap \trop(X)$. 
Then there exist 
$\sigma \in \Sigma$ and $\lambda \in \trop(X)$ with
$\sigma \cap \lambda = \varrho$ and we will always assume $\sigma$ and $\lambda$ to be minimal with this property.
Note that if $\sigma$ is a leaf cone, we have $\sigma \subseteq |\trop(X)|$
and due to the
quasifan structure fixed on $\trop(X)$ we obtain $\sigma = \varrho$ for a ray $\varrho$ of $\Sigma$.
So assume $\sigma$ is not a leaf cone.
We distinguish between the following two cases:

\emph{Case 1:} We have $\lambda = \lambda_{\mathrm{lin}}$
and with $\sigma \cap \lambda \neq \emptyset$ we conclude that $\sigma$ is big.
In particular, there exists a $P$-elementary cone $\sigma_1 \subseteq \sigma$ 
with 
$$
\varrho_{\sigma_1} = \sigma_1 \cap \lambda_{\mathrm{lin}} = \sigma \cap \lambda_{\mathrm{lin}} = \varrho.
$$
As this equality holds for any $P$-elementary cone $\sigma_1 \subseteq \sigma$
we can apply Lemma~\ref{lem:elemetaryreicht}
and conclude $\sigma$ is $P$-elementary and $\varrho = \varrho_\sigma$.

\emph{Case 2:} We have 
$\lambda \neq \lambda_\mathrm{lin}.$
Due to minimality of $\sigma$ and $\lambda$ we 
have $\varrho \not \subseteq \lambda_\mathrm{lin}$ which implies that $\sigma$ is 
special and
$\sigma^{\circ} \cap \varrho = \sigma^{\circ} \cap \lambda \neq \emptyset$. In particular, either $\sigma$ 
fulfills already condition (i) of Definition~\ref{def:elemCones} and is elementary, or it contains a $P$-elementary cone $\sigma_1$. 
In the latter case we conclude
$$
\varrho_{\sigma_1} = \sigma_1 \cap \lambda = \sigma \cap \lambda = \varrho.
$$
As the above equality does not depend on the choice of $\sigma_1 \subseteq \sigma$ we conclude that $\sigma$ is elementary  due to Lemma~\ref{lem:elemetaryreicht} and $\varrho = \varrho_{\sigma}$. 

We show "$\supseteq$". By construction the rays of $\Sigma$ are supported on the tropical variety. Therefore it is only left to show that any ray 
$\varrho_\sigma$ lies in $(\Sigma \sqcap \trop(X))^{(1)}$. This follows by definition of elementary and Lemma~\ref{lem:raysOfElementaries}~(ii).
\end{proof}

Due to Proposition~\ref{prop:CoxRing} the Cox ring $R(A,P)$ of an explicit special arrangement variety $X:=X(A,P,\Sigma)\subseteq Z$ is a complete intersection ring and therefore so is the Cox ring of any explicit weakly tropical resolution $X'$.
Applying~\cite{ArDeHaLa2015}*{Prop. 3.3.3.2} 
we obtain the canonical class of $X'$ 
via the following formula:
\begin{equation}
\KKK_{X'} = - \sum_{\varrho \in (\Sigma')^{(1)}}\deg(T_\varrho)
+ \sum_{i= 1}^{r-c} \deg(g_i') 
\in\Cl(X)\cong\ZZ^{n+m} / \im((P')^*).
\end{equation}

In order to prove Proposition~\ref{prop:disc}
we directly import the notion of a toric canonical $\varphi$-family from~\cite{HiWr2018}.

\begin{definition}
\label{def:toricCanonPhiFamily}
Let $X \subseteq Z$ be 
a $\QQ$-Gorenstein explicit arrangement variety
admitting an semi-locally toric
explicit weakly tropical resolution
$\varphi \colon Z' \to Z$. 
A \emph{toric canonical $\varphi$-family} 
is a family $(Z'_{\sigma'},D_{\sigma'})_{\sigma' \in \Sigma'}$, 
where  the $Z'_{\sigma'} \subseteq Z'$ are the affine charts of $Z'$ and the $D_{\sigma'}$ are $T'$-invariant 
Weil divisors on $Z'$ such that for every $\sigma' \in \Sigma'$ 
the following holds:
\begin{enumerate}
\item
$D_{\sigma'} \vert_{X'}$ is a canonical divisor on $X'$,
\item
on $Z'_{\sigma'}$ we have $D_{\sigma'} = k_{Z'}$,
where $k_{Z'} = - \sum_{\varrho' \in \Sigma'} D_{Z'}^{\varrho'}$.
\end{enumerate}
\end{definition}

Let $u \in M_\QQ$ be a rational character. Then the multiplicity of $\div(\chi^{u})$
along the divisor $D_Z^\varrho$ corresponding to a ray 
$\varrho \in \Sigma$ is given as $\langle u, v_\varrho \rangle$,
where, $v_\varrho \in \varrho$ denotes 
the primitive lattice vector inside~$\varrho$.
Now, in the situation of Definition~\ref{def:toricCanonPhiFamily}
let $(Z'_{\sigma'},D_{\sigma'})_{\sigma' \in \Sigma'}$
be a toric canonical $\varphi$-family. 
Choose for every $\sigma' \in \Sigma'$ a cone $\sigma \in \Sigma$ with 
$\sigma' \subseteq \sigma$
and a $u_{\sigma'} \in M_\QQ$ with 
$\varphi_* D_{\sigma'} = \div(\chi^{u_{\sigma'}})$ 
on $Z_\sigma \subseteq Z$.
Then we have
$$ 
\mathcal{A} \cap \sigma'
\ = \ 
\sigma' \cap \{v \in N_\QQ; \ \langle u_{\sigma'} , v \rangle \ge -1 \}.
$$

\begin{proof}[Proof of Proposition~\ref{prop:disc}]
We prove (i).
By assumption $X' \subseteq Z'$
is semi-locally toric and by~\cite{HiWr2018}*{Lem. 3.16}
there exists a toric canonical $\varphi$-family. 
Therefore, explicitly constructing a pair 
$(Z'_{\varrho_{\sigma}}, D_{\varrho_\sigma})$ as in Definition
\ref{def:toricCanonPhiFamily} we can 
calculate the discrepancy along 
$D_{X'}^{\varrho_\sigma}$:

Consider the ray $\varrho_{\sigma} \in \Sigma'$
and let $g_1', \ldots, g_r'$ be the defining relations
of $\mathcal{R}(X')$.
Then in each $g_t'$ we can choose a monomial not divisible by the
variable $T_{\varrho_\sigma}$.
Let us denote this monomial with 
$T_t^{l_t}= T_{t1}^{l_{t1}}\cdots T_{tn_t}^{l_{tn_t}}$.
Then we may choose
$$D_{\varrho_\sigma} := \sum_{t =1}^r
\sum_{j = 1}^{n_t}l_{tj} D_{\varrho'_{tj}} - \sum_{\varrho' \in (\Sigma')^{(1)}}D_{\varrho'},$$
where $D_{\varrho'_{tj}}$ denotes the divisor corresponding to the variable
$T_{tj}$ in $\mathcal{R}(X')$.
As $X'$ has complete intersection Cox ring, the pullback $D_{\varrho_\sigma}|_{X'}$ is a canonical divisor on $X'$. Moreover, the push forward $\varphi_*(D_{\varrho_{\sigma}})$ is $\QQ$-Cartier and by construction we have $D_{\varrho_\sigma} = k_{Z'}$ on $Z'_{\varrho_{\sigma}}$.
In particular, we have constructed a tuple $(Z'_{\varrho_{\sigma}}, D_{\varrho_\sigma})$ as claimed.
Now, let $u \in \QQ^{r+s}$ be an element such that $\mathrm{div}(\chi^{u}) = \varphi_*(D_{\varrho_\sigma})$ holds on $Z_{\sigma}$. Then due to \cite{HiWr2018}*{Rem. 3.11} we have
$$
\mathrm{discr}_X(D_{X'}^{\varrho_\sigma}) = -1 - \bangle{u, v_{\varrho_\sigma}}. 
$$
Therefore, using $v_{\sigma} = v_{\varrho_\sigma} \cdot c_{\sigma}$, we obtain the assertion with
$$
\bangle{u, v_{\sigma}} 
=
\bangle{u, \sum_{i=0}^r \ell_{\sigma, i}v_{ij_i}}
= 
\sum_{i=0}^r\ell_{\sigma,i}\bangle{u, v_{ij_i}}
=
- \ell_\sigma.
$$
Here the last equality holds, as 
$D_{ij_i}$ occurs in $\varphi_*(D_{\varrho_\sigma})$
if and only if there exists a term $T^{\nu} \cdot T_i^{l_i}$
with $\nu \in \ZZ_{\geq 0}^{r+s}$
which is not divisible by $T_{\sigma_\varrho}$
in one of the shifts.
Using (i) assertion (ii) 
follows from the definition of the anticanonical complex.
\end{proof}

\section{Proof of Theorem~\ref{thm:classification}}\label{sec:7}
This section is dedicated to the proof of Theorem~\ref{thm:classification}.
We start by investigating the critical values of the maximal orbit quotient for honestly special arrangement varieties.

\begin{proposition}\label{prop:fiveLines}
Let $X$ be a projective honestly special arrangement
variety of complexity two. Then the maximal orbit quotient
$X\dashrightarrow \PP_2$ has at least five lines as its critical values.
\end{proposition}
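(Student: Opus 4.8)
The critical values are precisely the lines $H_0,\ldots,H_r$ produced in Construction~\ref{constr:MOQ}, and since the columns of $A$ are pairwise linearly independent these $r+1$ lines are pairwise distinct. So the plan is to prove $r\geq 4$. Recall that $A$ is a $3\times(r+1)$ matrix of full rank $3$ (Construction~\ref{constr:RAP0} with $c=2$), whence $r\geq 2$; moreover, by the general-position criterion, the arrangement is in special position exactly when some three of the lines are concurrent, i.e.\ when some three columns of $A$ are linearly dependent. I would rule out the two remaining small cases $r=2$ and $r=3$.

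The case $r=2$ is immediate: then $A$ is an invertible $3\times 3$ matrix, so $a_0,a_1,a_2$ are linearly independent and the three lines meet pairwise in three distinct points. Hence the arrangement is in general position, contradicting speciality, and therefore $r\geq 3$.

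The crucial case is $r=3$, where I would derive a contradiction with honesty. Speciality yields a concurrent triple; after renumbering via Remark~\ref{rem:admissible} assume $a_0,a_1,a_2$ are linearly dependent. As $\mathrm{rk}(A)=3$, the column $a_3$ does not lie in $\mathrm{Lin}(a_0,a_1,a_2)$, so every $v\in\ker(A)$ satisfies $v_3=0$; thus $\ker(A)=\langle(v_0,v_1,v_2,0)\rangle$ is one dimensional, and pairwise independence of the columns forces $v_0,v_1,v_2\neq 0$ (if say $v_2=0$, then $v_0a_0+v_1a_1=0$ would make $a_0,a_1$ proportional). Consequently the single defining relation of $R(A,P)$ is $g=v_0T_0^{l_0}+v_1T_1^{l_1}+v_2T_2^{l_2}$, involving only the blocks $0,1,2$. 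I would then reinterpret $R(A,P)$ as a complexity one Cox ring: pick a $2\times 3$ matrix $A'$ with pairwise independent columns and $\ker(A')=\langle(v_0,v_1,v_2)\rangle$, and regard the variables $T_{3j}$ of the now-unused block, together with the $S_k$, as free variables. Keeping every degree unchanged, this presents $R(A,P)=R(A',P')$ as $K$-graded rings, where $A'$ defines a configuration of three distinct points in $\PP_1$.

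The main obstacle is precisely this last identification: I must check that it yields a genuine general arrangement variety in the sense of Definition~\ref{def:arrangementVariety}. This is where I would invoke the decomposition machinery of Proposition~\ref{prop:decomp}: the splitting $\CC^3=\mathrm{Lin}(a_0,a_1,a_2)\oplus\mathrm{Lin}(a_3)$ with one-dimensional second summand is exactly the situation in which the block $a_3$ is absorbed into the free variables and the rank of $A$ drops by one, lowering the complexity from two to one. Since any three distinct points of $\PP_1$ are automatically in general position, $R(A',P')$ is the Cox ring of a general arrangement variety, contradicting the hypothesis that $X$ is honestly special. Having excluded $r=2$ and $r=3$, we obtain $r\geq 4$, so the arrangement consists of at least five lines.
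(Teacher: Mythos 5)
Your proof is correct and follows essentially the same route as the paper's: the paper likewise realizes $X$ as an explicit arrangement variety, disposes of up to three lines as a degenerate case (there the Cox ring is a polynomial ring, hence $X$ is toric, where you argue general position directly), and for four lines invokes exactly the hidden-free-variable reduction to a complexity-one general arrangement Cox ring of Example~\ref{ex:notHonest} that you carry out in detail. In short, your write-up is a rigorous, fleshed-out version of the paper's three-sentence sketch, including the one point the paper leaves implicit — that reinterpreting the third row of $P_0$ as part of the lower block $d$ keeps the grading group and all generator degrees unchanged, so the identification $R(A,P)\cong R(A',P')$ is an isomorphism of graded rings, as required to contradict honesty.
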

\begin{proof}
We realize $X$ as an explicit $\TT$-variety $X(\alpha,P,\Sigma)$ as in~\cite{HaHiWr2019}. For one up to three lines, the Cox ring $R(\alpha,P)$ is a polynomial ring and thus $X$ is toric.
In case of four lines, $X$ admits either a torus action of complexity one, having one hidden free variable as in Example~\ref{ex:notHonest} or is a general arrangement variety. Thus the assertion follows.
\end{proof}

\begin{remark}\label{rem:quadratic}
Let $X:=X(A,P,\Sigma)\subseteq Z$ be an explicit honestly special arrangement variety of complexity two and assume the maximal orbit quotient $X\dashrightarrow \PP_2$ has a line arrangment of five lines as its critical values. 
Then we obtain the following two types of relations, as the remaining cases are either general or of complexity one:
$$
\begin{array}{rll}
\text{(I)}&g_1 = T_0^{l_0}+T_1^{l_1}+T_2^{l_2}+T_3^{l_3},&g_2 = T_1^{l_1}+aT_2^{l_2}+T_4^{l_4}
\\
\text{(II)}& g_1=T_0^{l_0}+T_1^{l_1}+T_3^{l_3}, & g_2=T_0^{l_0}+T_2^{l_2}+T_4^{l_4}
\end{array}
$$
Now specializing to dimension three and divisor class group of rank at most two, we obtain $n_i\leq 2$, $n_i = 2$ for at most two indices $i=0,\ldots,4$ and $m\leq 2$. Assuming that $X$ is of finite isotropy order at most two, we obatin $l_i = 2$ for $n_i = 1$ and $l_i = (1,1)$ for $n_i = 2$. In particular the polynomials $g_1,g_2$ are quadratic. 

Note that in question of isomorphy, the distribution of the $n_i$ is important: Two rings $R(A,P)$ and $R(A',P')$ of the same type (I) or (II), where the vector $(n_0,\ldots,n_r)$ is a permutation of the vector $(n_0',\ldots,n_r')$ do not need to be isomorphic, see i.a.\ Nos. 7 and 9 in our list.
\end{remark}

\begin{remark}
Let $X(A,P,\Sigma)\subseteq Z$ be an explicit arrangement variety with divisor class group of rank two
and consider the weight matrix $Q$ whose columns consist 
of the free part of the weights $w_{ij}:=\deg(T_{ij})$ resp. $w_k:=\deg(T_k)$.
We write a $\QQ$-basis for the kernel of $P$ in the 
rows of a  matrix $\tilde{Q}$
and define a vector $-w_X$:
$$
\tilde Q= [\tilde w_{01},\ldots,\tilde w_{rn_r}],\qquad
-w_X:=\sum \tilde w_{ij} - (r-c) \sum_j l_{0j}\tilde w_{0j}.
$$
Then there is a $\QQ$-linear isomorphism mapping the columns of $Q$ on the columns of $\tilde Q$ and thus the canonical class $-\KKK_x$ on $-w_X$. This isomorphism is either orientation preserving or reversing. Therefore, in question of the position of weights inside $\Eff(X)$ as in Remark~\ref{rem:divClass}, it suffices to look at {\em rational weight matrices} $\tilde{Q}$ with {\em rational anticanonical vectors} $-w_X$.
\end{remark}

\begin{remark}
The following list of admissible operations on $P$ do not effect the isomorphy type of the rings $R(A,P)$:
\begin{enumerate}
    \item Swapping two columns inside a block $v_{ij_1},\ldots, v_{ij_{n_i}}$.
    \item Adding multiples of the upper $r$ rows to one of the last $s$ rows.
    \item Any elementary row operations among the last $s$ rows.
    \item Swapping two columns inside the last $s$ rows lying under the $0$-block of the matrix $P_0$.
\end{enumerate}
\end{remark}

\begin{proof}[Proof of Theorem~\ref{thm:classification}]
Let $X:=X(A,P,\Sigma)$ be an explicit special arrangement variety as in Theorem~\ref{thm:classification}. 
Then the Cox ring of $X$ is given as a ring $R(A,P)$ as in Remark~\ref{rem:quadratic}.
In a first step, we bound the entries of $P$ to obtain a list of candidates. Note that our computations are independent of the type of the relations and the distribution of the $n_i$.

\vspace{10pt}
\noindent
\emph{Case $\rk(\Cl(X)) = 1$:}
Due to Remark~\ref{rem:quadratic} we are left with the following three cases:
\begin{enumerate} 
\item[(a)] $n= 5$ and $m=1$
\item[(b)] $n= 6$ and $m=0$
\end{enumerate}
Note that in case of divisor class group of rank one, every $\bar{X}$-face is an $X$-face. 

\vspace{2pt}
\noindent
\emph{Case (a):}
After applying suitable admissible operations on $P$ we may assume that we are in the following situation
$$
P=\left[
\begin{array}{ccccccc}
-2&2&0&0&0&0\\
-2&0&2&0&0&0\\
-2&0&0&2&0&0\\
-2&0&0&0&2&0\\
x&1&1&1&1&1
\end{array}\right].
$$
Now the big cone $\sigma$ gives rise to a vertex
$v_\sigma'$ of the anticanonical complex
$$
\sigma = \mathrm{cone}(v_{01},v_{11},v_{21},v_{31},v_{4,1}),
\qquad
v_{\sigma}' = [0,0,0,0,4+x].
$$
Thus $x = -5$ holds due to the singularity type of $X$.

\vspace{2pt}
\noindent
\emph{Case (b):}
After applying suitable admissible operations on $P$ we may assume that we are in the following situation
$$
P=\left[
\begin{array}{ccccccc}
-1&-1&2&0&0&0\\
-1&-1&0&2&0&0\\
-1&-1&0&0&2&0\\
-1&-1&0&0&0&2\\
x&y&1&1&1&1
\end{array}\right],
$$
where we may assume $x > y$. Now the two big cones
$$
\sigma_1=\mathrm{cone}(v_{01},v_{11},v_{21},v_{31},v_{41}),
\qquad
\sigma_2=\mathrm{cone}(v_{02},v_{12},v_{21},v_{31},v_{41})
$$
give the vertices $v_{\sigma_1}'$ and $v_{\sigma_2}'$ of the anticanonical complex 
$$
v_{\sigma_1}'=[0,0,0,0,2+x],
\qquad
v_{\sigma_2}'=[0,0,0,0,2+y].$$
We conclude $x = -1$ and $y = -3$ due to the singularity type of $X$.

\vspace{10pt}
\noindent
\emph{Case $\rk(\Cl(X)) = 2$:}
Due to Remark~\ref{rem:quadratic} we are left with the following three cases:

\begin{enumerate} 
\item[(a)] $n= 5$ and $m=2$
\item[(b)] $n= 6$ and $m=1$
\item[(c)] $n= 7$ and $m=0$
\end{enumerate}

\vspace{2pt}
\noindent
\emph{Case (a):}
After applying suitable admissible operations on $P$ we may assume that we are in the following situation
$$
P=\left[
\begin{array}{ccccccc}
-2&2&0&0&0&0&0\\
-2&0&2&0&0&0&0\\
-2&0&0&2&0&0&0\\
-2&0&0&0&2&0&0\\
x&1&1&1&1&1&-1
\end{array}\right]
$$
Now a rational weight matrix $\tilde Q$ and the corresponding rational vector $-w_X$ is given as 
$$
\tilde Q=\left[
\begin{array}{ccccccc}
1&1&1&1&1&-x-4&0\\
0&0&0&0&0&1&1
\end{array}
\right],\qquad
-w_X = \left[
\begin{array}{c}
     -x-3\\
     2
\end{array}
\right]
$$
In case $-x-4 \leq 0$ we have $\SAmple(X)=\cone([1,0],[0,1])$
and thus $-x-3 > 0$ due to the Fano property of $X$.
This implies $-4\leq x < -3$ and thus $x=-4$; a contradiction to the primality of the columns of $P$.
In case $-x-4 > 0$, we have 
$\SAmple(X) = \cone([1,0],[-x-4,1])$.
This implies
$$-w_X=\left[\begin{array}{c}
-x-3\\
2
\end{array}\right] = (x+5)
\left[\begin{array}{c}
1\\
0
\end{array}\right]
+2
\left[\begin{array}{c}
-x-4\\
1
\end{array}\right]$$
with $x+5>0$ due to the Fano property of $X$.
Thus, we obtain $-5<x<-4$; a contradiction.

\vspace{2pt}
\noindent
\emph{Case (b):}
After applying suitable admissible operations on $P$ we may assume that we are in the following situation
$$
P=\left[
\begin{array}{ccccccc}
-2&1&1&0&0&0&0\\
-2&0&0&2&0&0&0\\
-2&0&0&0&2&0&0\\
-2&0&0&0&0&2&0\\
1&x&y&1&1&1&1
\end{array}\right],
$$
where we may assume $x>y$.
Due to completeness of $X$, we obtain an 
elementary big cone $\sigma\in\Sigma$ defining a 
vertex $v_{\sigma}'$ of the anticanonical complex:
$$\sigma=\mathrm{cone}(v_{01},v_{12},v_{21},v_{31},v_{41}),
\qquad
v_\sigma' = [0,0,0,0,y+2].$$
Thus, we conclude $y=-3$ due to the singularity type of $X$.
Now a rational weight matrix $\tilde Q$ and the corresponding  rational vector $-w_X$ is given as
$$
\tilde Q=\left[
\begin{array}{cccccccc}
     1&2&0&1&1&1&-2x-4\\
     1&0&2&1&1&1&2
     \end{array}
\right],\qquad
-w_X=
\left[
\begin{array}{c}
-2x-4\\
2
\end{array}
\right]
$$
In particular, we obtain $\SAmple(X)\subseteq \QQ^2_{\geq 0}$. As $X$ is Fano, this implies $-2x-2\geq 0$ and thus $x\leq -1$.

\vspace{2pt}
\noindent
\emph{Case (c):}
After applying suitable admissible operations on $P$ we may assume that we are in the following situation
$$
P=\left[
\begin{array}{ccccccc}
-2&1&1&0&0&0&0\\
-2&0&0&1&1&0&0\\
-2&0&0&0&0&2&0\\
-2&0&0&0&0&0&2\\
1&x&y&z&0&1&1
\end{array}\right]
$$
where we may assume $x>y$ and $z > 0$.
Now, due to completeness of $X$ we obtain two elementary big cones 
$$\sigma_1 = \mathrm{cone}(v_{01},v_{11},v_{21},v_{31},v_{41}),
\qquad
\sigma_2 = \mathrm{cone}(v_{01},v_{12},v_{22},v_{31},v_{41})
$$
defining the following vertices of the anticanonical complex
$$v_{\sigma_1}' = [0,0,0,0,1+(2/3) x + (2/3) z],
\qquad
v_{\sigma_2}' = [0,0,0,0,1+(2/3)y].$$
Thus we conclude 
$$-3\leq y \leq -2 \qquad \text{and} \qquad 0< 1+(2/3)x+(2/3)z \leq 1$$
due to the singularity type of $X$. This implies 
$-2\leq x < 0$ and thus $0 < z \leq 2$.

Now, any of the configurations above gives a ring $R(A,P)$.
A direct computation shows that in all cases, the generators $T_{ij}$ are $K$-prime. To obtain our list, we computed the anticanonical complexes for all configurations and checked for canonicity using the characterization at the beginning of Section~\ref{sec:6}. After removing some redundancy, we obtain the varieties in our list.
\end{proof}

\section{Beyond arrangement varieties}\label{sec:8}
In this section we go one step beyond arrangement varieties and consider varieties whose maximal orbit quotient decomposes as a product of arrangements. Note that their Cox rings (without the grading) already appeared in Section~\ref{sec:3}: As rings they are isomorphic to decomposable rings $R(A,P_0)$. 
We follow the ideas of Section~\ref{sec:2} by constructing Cox rings of these varieties and realize them as explicit $\TT$-varieties.
As an application we obtain in Proposition~\ref{prop:notTrue} a criterion to determine the true complexity of a special arrangement variety. 
Finally, we give a full classification in the smooth case for projective varieties up to Picard number two and characterize the Fano property of these varieties.

\begin{definition}
An {\em arrangement-product variety} is a variety $X$ with an effective action of an algebraic torus $\TT\times X\rightarrow X$ having $X\dashrightarrow \PP_{c_1}\times \ldots\times\PP_{c_t}$ with $t>1$ as a maximal orbit quotient and the critical values is given as a collection of products 
$$\PP_{c_1}\times \ldots\PP_{c_{i-1}}\times D_k^{(i)}\times\PP_{c_{i+1}}\times \ldots\times \PP_t$$
where $i=1,\ldots, t$ and the collection $D_k^{(i)}$ is a hyperplane arrangement in $\PP_{c_i}$.
\end{definition}

\begin{remark}
Arrangement-product varieties have finitely generated Cox rings due to~\cite{HaSu2010}*{Thm 1.2}.
\end{remark}

We go on by constructing Cox rings of arrangement-product varieties. We will make use of the notion of (in-)decomposability of rings $R(A,P)$ as in Definition~\ref{def:indecomposable}.

\begin{construction}\label{constr:RprodAP}
Consider indecomposable rings $R(A^{(i)},P_0^{(i)})$ for $i=1,\ldots,t$ from Construction~\ref{constr:RAP0} with $m_i'=0$.
Choose integers $s>0$, $m\geq 0$ and set
$$c:=c^{(1)}+\ldots+c^{(t)},\ n:=n^{(1)}+\ldots+n^{(t)}\text{ and } r:=r^{(1)}+\ldots+r^{(t)}.$$
We build up a new $(c+t)\times(r+t)$ resp. $(r+s)\times(n+m)$ matrices
$$A:=\left[
\begin{array}{ccc}
A^{(1)}&&\\
&\ddots&\\
&&A^{(t)}
\end{array}
\right],\qquad
P:=\left[
\begin{array}{c}
P_0\\
\hline
d
\end{array}
\right]
:=
\left[
\begin{array}{cccccc}
P_0^{(1)}&&&0&\ldots&0\\
&\ddots&&\vdots&&\vdots\\
&&P_0^{(t)}&0&\ldots&0\\
\hline
&&d&&
\end{array}
\right],
$$
where we require the columns of $P$ to be pairwise different and primitive, generating $\QQ^{r+s}$ as a vectorspace. 
Denote by $e^{(i)}_{kl}$ resp. $e^{(i)}_k$ the canonical basis vectors of $\QQ^{n+m}$ accordingly to the decomposition $n=n^{(1)}+\ldots+n^{(t)}$ and let $Q_0\colon \ZZ^{n+m}\rightarrow \ZZ^{n+m}/\mathrm{im}(P_0^*):=K_0$
be the projection onto the factor group.
We define a $\CC$-algebra
$$R_\mathrm{prod}(A,P_0):=\bigotimes R(A^{(i)},P_0^{(i)})$$
and endow it with a $K_0$-grading by setting
$$\deg(T^{(i)}_{kl}):=Q(e_{kl}^{(i)}),\qquad \deg(S_k):=Q(e_k).$$
Moreover, by considering the projection 
$Q\colon \ZZ^{n+m}\rightarrow \ZZ^{n+m}/\mathrm{im}(P^*):=K$, we define analogously a $K$-graded $\CC$-algebra $R_\mathrm{prod}(A,P)$.
\end{construction}

\begin{remark}
The $K_0$-graded $\CC$-algebras $R_\mathrm{prod}(A, P_0)$ 
are integral, normal, complete intersection 
rings satisfying 
$$ 
\dim(R_\mathrm{prod}(A,P_0)) \ = \ n+m-r+c,
\qquad 
R_\mathrm{prod}(A,P_0)^* \ = \ \CC^*
$$
and the $K_0$-grading is the finest possible grading on $R_\mathrm{prod}(A,P_0)$, leaving the variables and the relations homogeneous. Moreover, it is effective, pointed, factorial and of complexity~$c$. Considering the rings $R_\mathrm{prod}(A,P)$, the $K$-grading is effective, factorial and of complexity $c$ and, if the columns of $P$ generate $\QQ^{r+s}$ as a cone, it is pointed as well.
\end{remark}

\begin{remark}
Every arrangement-product variety has a $K$-graded $\CC$-algebra $R_\mathrm{prod}(A,P)$ as in Construction~\ref{constr:RprodAP} as its Cox ring.
\end{remark}

As done in Section~\ref{sec:2}, we use the rings $R_\mathrm{prod}(A,P)$ to construct explicit $\TT$-varieties following precisely the same steps as in Construction~\ref{constr:XAPSigma}.
We will denote the resulting explicit $\TT$-varieties with $X_\mathrm{prod}(A,P,\Sigma)\subseteq Z.$

\begin{remark}\label{rem:prodCompl}
Let $X:=X_\mathrm{prod}(A,P,\Sigma)\subseteq Z$ be an explicit arrangement-product variety.
Then the subtorus action of $\TT^s\subseteq\TT^{r+s}$ on $Z$ leaves $X$ invariant. This turns $X$ into a $\TT^s$-variety of complexity $c = c_1+\ldots+c_t$.
\end{remark}

\begin{proposition}\label{prop:notTrue}
Let $X:=X(A,P,\Sigma)\subseteq Z$ be an explicit special arrangement
variety of complexity $c$ with a decomposable ring $R(A,P)$. Then $X$ is not of true complexity~$c$.
\end{proposition}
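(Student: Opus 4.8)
The plan is to exhibit on $X$ an effective torus action of complexity strictly smaller than $c$, which is exactly what "not of true complexity $c$" asks for. The guiding principle I would use is that effective torus actions on $X$ correspond to refinements of the $\Cl(X)$-grading of the Cox ring $R(A,P)$, the complexity of the action being the complexity of the refined grading; consequently the true complexity equals the complexity of the \emph{finest} grading of $R(A,P)$ that keeps the generators $T_{ij},S_k$ and the relations $g_v$ homogeneous.

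First I would apply Proposition~\ref{prop:decomp} to the decomposable ring $R(A,P)$: as a $\CC$-algebra it is isomorphic to a non-trivial product $\bigotimes_{i=1}^t R(A^{(i)},P_0^{(i)}) \otimes \CC[S_1,\ldots,S_{m'}]$ with indecomposable factors. By the remark following Construction~\ref{constr:RprodAP}, the product grading is the finest grading on this ring leaving variables and relations homogeneous; since it refines the $\Cl(X)$-grading it corresponds to an honest effective torus action on $X$ --- concretely the arrangement-product structure of Construction~\ref{constr:RprodAP}, or, when the extra free variables $S_k$ are present, a hidden $\CC^*$-factor as in Example~\ref{ex:notHonest}. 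By Remark~\ref{rem:prodCompl} this action has complexity $c^{(1)}+\cdots+c^{(t)}$, because complexity is additive under the tensor product and the polynomial factor $\CC[S_1,\ldots,S_{m'}]$ contributes $0$.

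Second I would compare the two complexities by a rank count of $A$. Since $A$ has full rank we have $c=\rk(A)-1$, and each indecomposable block satisfies $c^{(i)}=\rk(A^{(i)})-1$. After the block-diagonalisation underlying Proposition~\ref{prop:decomp} (a block decomposition together with $f:=m'-m$ split-off free-variable directions, each lowering the rank by one) one obtains $\rk(A)=\sum_{i=1}^t \rk(A^{(i)})+f$, hence
$$
c \ = \ \rk(A)-1 \ = \ \sum_{i=1}^t c^{(i)} \ + \ (t+f-1).
$$
For a genuinely decomposable ring the decomposition is non-trivial, i.e. $t\geq 2$ or $f\geq 1$, so $t+f\geq 2$ and therefore $\sum_i c^{(i)} \leq c-1 < c$. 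Thus the torus action constructed in the first step has complexity strictly below $c$, and $X$ is not of true complexity $c$.

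The main obstacle is the first step: one must justify rigorously that the finer product grading is not merely an abstract regrading of the ring but corresponds to an effective torus action on the \emph{same} variety $X\subseteq Z$, with complexity equal to the complexity of that grading. Here I would lean on the realization machinery of Construction~\ref{constr:RprodAP} together with Remark~\ref{rem:prodCompl}, checking that the fan $\Sigma$ and the Cox quotient are unchanged and only the acting torus is enlarged; additivity of complexity under the product and the vanishing contribution of the free variables then make the complexity computation immediate. The remaining rank bookkeeping is routine once the block-diagonal normal form of $A$ provided by Remark~\ref{rem:admissible} and Proposition~\ref{prop:decomp} is in place.
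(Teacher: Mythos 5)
Your proposal is correct and takes essentially the same route as the paper: the paper's own proof is a one-sentence observation that a decomposable $R(A,P)$ lets one regain $X$ as an explicit arrangement-product variety from $R_{\mathrm{prod}}(A,P)$ (Construction~\ref{constr:RprodAP}), whose torus action has complexity $c^{(1)}+\cdots+c^{(t)}<c$ by Remark~\ref{rem:prodCompl}, which is exactly your first step. Your rank count merely makes explicit the strict inequality the paper leaves implicit; the only (harmless) imprecision is setting $f=m'-m$, since a split-off column $a_i$ lowers the rank by one while contributing $n_i\geq 1$ free variables, so $f$ should be the number of split-off directions, with $f\leq m'-m$ and $f\geq 1$ exactly when $m'>m$ --- the conclusion $\sum_i c^{(i)}\leq c-1$ is unaffected.
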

\begin{proof}
If $R(A,P)$ is decomposable, then $X$ can be regained as an explicit $\TT'$-variety out of its Cox ring $R_\mathrm{prod}(A,P)$, where the $\TT'$-action is of lower complexity by Remark~\ref{rem:prodCompl}.
\end{proof}

We now turn to our main results concerning smoothness of arrangement-product varieties.
We will without further explanation use the language 
of explicit $\TT$-varieties as done in Section~\ref{sec:3}.
In particular, the smoothness criteria from Remark~\ref{rem:smoothCrit} can be applied in our situation.

\begin{proposition}
Let $X$ be a projective arrangement-product variety of Picard number one. Then $X$ is singular.
\end{proposition}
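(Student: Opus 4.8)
The plan is to adapt the Picard number one case of Theorem~\ref{thm:noSmoothOnes} to the present situation, working with an explicit realization $X=X_{\mathrm{prod}}(A,P,\Sigma)$ and using that, as noted above, the smoothness criteria of Remark~\ref{rem:smoothCrit} apply verbatim. The decisive feature is that the Cox ring $R_{\mathrm{prod}}(A,P)$ of Construction~\ref{constr:RprodAP} is assembled from $t>1$ indecomposable blocks $R(A^{(i)},P_0^{(i)})$ sharing no variables.

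First I would record that $R_{\mathrm{prod}}(A,P)$ has at least two defining relations. Each block carries a projective space $\PP_{c_i}$ with $c_i\geq 1$, so the full rank matrix $A^{(i)}$ has at least two rows; being indecomposable in the sense of Definition~\ref{def:indecomposable}, it cannot be square, since a basis of $\CC^{c^{(i)}+1}$ always admits the splitting forbidden by indecomposability (take $I=\{1\}$). Hence $r^{(i)}>c^{(i)}$ and each block contributes at least one relation $g_v$; the $t\geq 2$ blocks thus yield at least two relations $g_1,g_2$, living in disjoint sets of variables.

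Next I would construct a nonzero $\overline{X}$-face $\gamma_0\preccurlyeq\gamma$ whose stratum in $\overline{X}$ is singular, as in Remark~\ref{rem:relations}. The indecomposability condition $l_{ij}n_i>1$ ensures that for one of the relations, say $g_1$, there is a coordinate subspace on which all its partial derivatives vanish; choosing $\gamma_0$ to expose this subspace while keeping enough free directions in the remaining blocks to satisfy the other relations, the gradient of $g_1$ vanishes identically along $\overline{X}(\gamma_0)$. Since $\overline{X}$ is a complete intersection cut out by $g_1,g_2,\ldots$, the Jacobian then has a zero row along this stratum and fails to be of full rank, so $\overline{X}(\gamma_0)$ consists of singular points of $\overline{X}$. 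The disjointness of the variable blocks is exactly what allows one to satisfy the remaining relations using untouched variables, so that $\gamma_0$ is a genuine $\overline{X}$-face.

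Finally, since $X$ has Picard number one, every nonzero $\overline{X}$-face is an $X$-face by the product analog of Remark~\ref{rem:Pic1}; in particular $\gamma_0$ is an $X$-face. Remark~\ref{rem:smoothCrit} then forces $\overline{X}(\gamma_0)$ to be smooth if $X$ is smooth, contradicting the preceding step, so $X$ must be singular. I expect the main obstacle to be the explicit construction of the singular $\overline{X}$-face together with the verification that it is an $\overline{X}$-face: this is the product counterpart of Remark~\ref{rem:relations}, and it is here that having $t\geq 2$ disjoint blocks, hence at least two relations and abundant free variables, is essential, as a single relation would permit a smooth quadric.
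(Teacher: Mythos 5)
Your proposal is correct and takes essentially the same route as the paper: decomposability of the Cox ring into $t>1$ indecomposable blocks (each necessarily carrying at least one relation, with every monomial of degree at least two by $l_{ij}n_i>1$) produces a nonzero coordinate $\overline{X}$-face on whose stratum the Jacobian acquires a zero row, hence a singular $\overline{X}$-stratum, and Picard number one promotes it to an $X$-face, contradicting smoothness via Remark~\ref{rem:smoothCrit}. The only cosmetic difference is the mirror-image choice of face: the paper takes $\gamma^{(1)}$ (block-one coordinates nonzero, the rest zero, so the gradients of the other blocks' relations vanish), while you zero out one block and keep the others nonzero.
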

\begin{proof}
By definition the Cox ring $R_\mathrm{prod}(A,P)$ is decomposable into $t>1$ indecomposable rings $R^{(i)}$. 
Therefore the cone
$$\gamma^{(1)} := \cone(e_{kl}^{(1)}; \ 0 \leq k \leq r^{(1)}, 1 \leq l \leq n_k^{(1)})$$ 
is an $\overline{X}$-face whose corresponding $\overline{X}$-stratum is singular.
As $X$ is of Picard number one, any $\overline{X}$-face is an $X$-face and we conclude that $X$ is singular.
\end{proof}

\begin{theorem}\label{thm:someSmoothOnes}
Every smooth projective arrangement-product variety of Picard number two is isomorphic to a variety $X$ specified by its Cox ring 
$$
\mathcal{R}(X)  = \CC[T_{11}, \ldots, T_{1k_1},T_{21}, \ldots T_{2k_2}]/\bangle{g_1, g_2}
$$
where
$$g_i  =  
\begin{cases}
T_{i1}T_{i2} + \ldots + T_{ik_i-1}T_{ik_i}, & k_i \geq 6 \quad  \text{even}\\
T_{i1}T_{i2} + \ldots + T_{ik_i-2}T_{ik_i-1} + T_{ik_i}^2, & k_i \geq 5 \quad \text{odd},
\end{cases}
$$
the matrix $Q$ of generator degrees and an ample class $u\in\Cl(X)=\ZZ^2$
$$
Q=
\left[
\begin{array}{ccc|cccc}
1 & \ldots & 1 & a_1 & a_2 & \ldots & a_{k_2}\\
0 & \ldots & 0 & 1 & 1 & \ldots & 1
\end{array}
\right],
\qquad 
u = [a_1 + 1,1],
$$
where we have $a_i \geq a_{i+2} \geq 0$ and $a_i + a_{i+1} = 0$ for $i$ odd
and $a_{k_2} =  0$ if $k_2$ is odd.
\end{theorem}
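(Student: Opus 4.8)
The plan is to combine the product decomposition of $R(A,P)$ from Section~\ref{sec:3} with the rank-two smoothness analysis of Section~\ref{sec:4}. By Proposition~\ref{prop:decomp} I may write $\mathcal{R}(X)=R_\mathrm{prod}(A,P)$ as a tensor product of indecomposable factors $R(A^{(i)},P_0^{(i)})$ with $m^{(i)}=0$, times $\CC[S_1,\ldots,S_m]$; by the definition of an arrangement-product variety there are $t\ge 2$ factors. The first goal is to show $m=0$, $t=2$, and that each $g_i$ is a smooth quadric of the asserted shape.

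Because the relations live in disjoint variable sets, the Jacobian of $\bar X$ is block diagonal, so a stratum $\bar X(\gamma_0)$ over a face $\gamma_0=\cone(e_i;\ i\in I)$ is singular as soon as some factor contributes no index to $I$: its smooth quadric then sits at its own origin. Hence, following the pattern of Lemma~\ref{lem:tool} and using Remark~\ref{rem:smoothCrit}, smoothness forces every $X$-face to meet every factor. But a minimal two-dimensional $X$-face realizing the ample class, $\tau_X\subseteq Q(\gamma_0)^\circ$, has only two rays and thus touches at most two factors; with $t\ge 3$, or with a free variable $S_k$ paired against a single factor, such a face annihilates a factor and is singular, a contradiction. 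This gives $m=0$ and $t=2$, and quasismoothness of the factor strata forces each $g_i$ to be a sum of pairs $T_{i,2j-1}T_{i,2j}$ together with at most one square $T_{ik_i}^2$. The bounds $k_i\ge 6$ (even) and $k_i\ge 5$ (odd) are precisely the condition that each factor have complexity $c_i\ge 1$, i.e.\ be genuinely indecomposable and non-toric.

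The heart of the proof is the weight normalization. By Remark~\ref{rem:divClass} one factor lies in $\overline{\tau^-}$ and the other in $\overline{\tau^+}$. Fixing a reference two-dimensional $X$-face $\cone(e_{1p_0},e_{2q_0})$, factoriality makes its weight pair unimodular, so Remark~\ref{rem:normForm} puts $w_{1p_0}=(1,0)$ and $w_{2q_0}=(0,1)$. Running unimodularity over the faces $\cone(e_{1p},e_{2q_0})$ gives each first-block weight the form $(1,y_{1p})$ and over $\cone(e_{1p_0},e_{2q})$ each second-block weight the form $(a_q,1)$; then over $\cone(e_{1p},e_{2q})$ with two distinct nonzero partners $a_q$ it forces $y_{1p}=0$, so every first-block weight equals $(1,0)$. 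A shear fixing $(1,0)$ normalizes $\deg g_1=(2,0)$ and $\deg g_2=(0,2)$, whence homogeneity yields $a_{2j-1}+a_{2j}=0$ and, in the odd case, $a_{k_2}=0$; the same faces show $K=\ZZ^2$ is torsion free. The main obstacle lies exactly here: one must read off from the combinatorics of a smooth quadric which block-pairs actually occur as $X$-faces, and verify that completeness of $\Sigma$ supplies, for every first-block variable, two independent second-block partners.

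It remains to record the inequalities and the ample class and to check the converse. Reordering the symmetric pairs $\{(a_{2j-1},1),(-a_{2j-1},1)\}$ by size and fixing signs gives $a_1\ge a_3\ge\cdots\ge 0$. Computing $\Ample(X)=\bigcap_{X\text{-faces }\gamma_0}Q(\gamma_0)^\circ$, the two extremal faces show $\tau_X$ is cut out by the rays through $(1,0)$ and the outermost weight $(a_1,1)$, so $u=(a_1+1,1)$ is the distinguished interior class that pins down the complete fan $\Sigma$. Finally I would verify the converse: for any data obeying the listed constraints every $X$-face meets the factoriality and smooth-stratum conditions of Remark~\ref{rem:smoothCrit}, so the resulting $X$ is smooth and projective, completing the classification.
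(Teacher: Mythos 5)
Your outline follows the same broad strategy as the paper (factor decomposition, stratum smoothness forcing every $X$-face to meet every factor, then weight normalization over two-dimensional $X$-faces), but it has a genuine gap exactly where you hang the whole argument: the existence of two-dimensional $X$-faces. Your derivation of $t=2$ and $m=0$ starts from ``a minimal two-dimensional $X$-face'', yet minimal $X$-faces need not be two-dimensional, because a subface of an $\bar{X}$-face is in general not an $\bar{X}$-face: for $g_1=T_{11}T_{12}+T_{13}T_{14}+\ldots$ the face $\cone(e_{11},e_{12})$ has empty stratum (on it $g_1$ restricts to a nonzero multiple of $T_{11}T_{12}$), although $\cone(e_{11},e_{12},e_{13},e_{14})$ is an $\bar{X}$-face. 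More to the point, a face $\cone(e_{1p},e_{2q})$ is an $\bar{X}$-face only when neither variable occurs as a pure square in its relation, i.e.\ only when each factor possesses a block with $n_\alpha=2$ --- and that is precisely the statement you label ``the main obstacle'' and never prove. The paper avoids this in Lemma~\ref{lem:tool2} by working with large faces built from whole factor cones, namely $\cone(\gamma^{(1)},\gamma^{(2)})$ for $t\geq 3$ and $\cone(\gamma^{(i)},e_k)$ for $m>0$: these are automatically $\bar{X}$-faces, they are $X$-faces because their weights lie on both sides of $\tau_X$ (part (i) of that lemma), and their strata are singular because an entire factor sits at its own origin. Only after that does projectivity enter: since $m=0$, the fan $\Sigma$ must contain a big cone which is an $X$-cone, and the associated $X$-face forces $n_\alpha^{(1)}=2=n_\beta^{(2)}$ for some blocks; this is what licenses the two-dimensional faces $\cone\bigl(e_{\alpha j}^{(1)},e_{\beta j'}^{(2)}\bigr)$ on which the determinant computations run. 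Without this step your normalization has no faces to work with, and your reduction to $y_{1p}=0$ additionally presupposes two distinct nonzero values $a_q$, which is not available at that stage either.

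A second, smaller error: you attribute ``at most one square term per relation'' to quasismoothness of the factor strata, but quasismoothness does not exclude two squares --- on the stratum where, among the variables of one factor, only the two square variables are nonzero, the equation $T_{1k}^2+T_{1k'}^2=0$ cuts out a smooth locus. The paper's argument is different: two pure squares in one relation produce torsion in the divisor class group, contradicting $\Cl(X)\cong\ZZ^2$, and torsion-freeness is itself deduced from factoriality along one of the two-dimensional $X$-faces above; so this step, too, rests on the big-cone argument you left open. The remaining parts of your outline (the determinant conditions, homogeneity giving $a_{2j-1}+a_{2j}=0$ and $a_{k_2}=0$ in the odd case, the ample class $u=(a_1+1,1)$, the bounds on $k_i$ as the condition that each factor have complexity at least one, and the final smoothness verification of the listed data) do match the paper.
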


\begin{corollary}\label{cor:somesmoothFanoOnes}
A smooth projective arrangement-product variety of Picard number two as in Theorem~\ref{thm:someSmoothOnes} is Fano if and only if
$0 \leq a_1 \leq \frac{k_1 -2}{k_2 - 2}$ holds.
\end{corollary}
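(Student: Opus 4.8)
The plan is to use the defining property of the Fano condition: $X$ is Fano precisely when its anticanonical class $-\mathcal{K}_X$ lies in the (open) ample cone $\tx \subseteq \Cl(X)_\QQ = \QQ^2$. The first step is to compute $-\mathcal{K}_X$ explicitly from the complete intersection canonical class formula of Section~\ref{sec:6}, which here reads $-\mathcal{K}_X = \sum_\varrho \deg(T_\varrho) - \deg(g_1) - \deg(g_2)$. Reading degrees off $Q$ gives $\sum_\varrho \deg(T_\varrho) = (k_1 + \textstyle\sum_j a_j,\, k_2)$, and the pairing conditions $a_i + a_{i+1} = 0$ for odd $i$, together with $a_{k_2} = 0$ when $k_2$ is odd, force $\sum_j a_j = 0$. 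These same conditions make every monomial of $g_1$ of degree $(2,0)$ and every monomial of $g_2$ of degree $(0,2)$ (e.g. $\deg(T_{21}T_{22}) = (a_1 + a_2, 2) = (0,2)$), so $\deg(g_1) = (2,0)$, $\deg(g_2) = (0,2)$, and hence $-\mathcal{K}_X = (k_1 - 2,\, k_2 - 2)$.

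The second step is to pin down the ample cone via Remark~\ref{rem:divClass}. The effective cone $\Eff(X) = Q(\gamma)$ is spanned by the extreme weights $(1,0)$ and $(a_2,1) = (-a_1, 1)$, and the two-dimensional cone $\tx$ is the GIT chamber containing the distinguished ample class $u = (a_1 + 1, 1)$. Since the weights of the second block all lie on the line $y = 1$, with largest first coordinate $a_1 = \deg(T_{21})_1$, the class $u$ sits between the extreme ray $\QQ_{\geq 0}(1,0)$ of $\Eff(X)$ and the weight ray $\QQ_{\geq 0}(a_1, 1)$; so I expect $\tx = \cone\bigl((1,0),\,(a_1,1)\bigr)^\circ$.

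The final step is the membership test $-\mathcal{K}_X \in \tx$. Writing $(k_1 - 2,\, k_2 - 2) = s\,(1,0) + t\,(a_1,1)$ yields $t = k_2 - 2 > 0$ (automatic, as $k_2 \geq 5$) and $s = (k_1 - 2) - a_1(k_2 - 2)$, so the class lies in $\tx$ exactly when $a_1(k_2 - 2) < k_1 - 2$, i.e. $a_1 \leq \frac{k_1 - 2}{k_2 - 2}$; the lower bound $0 \leq a_1$ is free, being part of the hypothesis $a_i \geq a_{i+2} \geq 0$ in Theorem~\ref{thm:someSmoothOnes}. This is the claimed range.

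The main obstacle is the second step: rigorously confirming that the wall of $\tx$ adjacent to $u$ is the ray $\QQ_{\geq 0}(a_1,1)$, rather than some other weight ray $\QQ_{\geq 0}(a_j,1)$. This requires the chamber analysis of Remark~\ref{rem:divClass}, namely identifying which $X$-faces $\gamma_0$ satisfy $\tx \subseteq Q(\gamma_0)^\circ$, to verify that $\deg(T_{21})$ is indeed the weight neighboring $u$. A secondary delicate point is the boundary of the inequality: strict ampleness gives $a_1(k_2 - 2) < k_1 - 2$, and one must check, using integrality of $a_1$ (and, where $\frac{k_1-2}{k_2-2}$ is an integer, the additional smoothness constraints), that this reconciles with the non-strict bound recorded in the corollary.
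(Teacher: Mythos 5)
Your proposal follows essentially the same route as the paper's proof: compute $-\mathcal{K}_X=(k_1-2,\,k_2-2)$ from the complete intersection formula, pin down the ample cone from the distribution of the weights, and test membership. Two comments on the points you flag as open. First, your ``main obstacle'' (identifying the wall of $\tau_X$) is not an obstacle in the paper's framework: by Lemma~\ref{lem:tool2}~(i) all weights of the first block lie in $\tau^-$ and all weights of the second block lie in $\tau^+$, and since $|a_j|\leq a_1$ for every $j$ this forces $\tau^-=\cone((1,0))$ and $\tau^+=\cone((a_1,1),(-a_1,1))$; by Remark~\ref{rem:divClass} the ample cone is exactly the open chamber of $\Eff(X)$ between the closed cones $\tau^-$ and $\tau^+$, so its wall adjacent to $\tau^+$ is $\QQ_{\geq 0}\cdot(a_1,1)$ with no further chamber analysis needed. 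This is precisely what the paper does.

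Second, your ``secondary delicate point'' is a genuine discrepancy, but it lies in the paper's statement rather than in your argument. The membership test yields the strict inequality $a_1(k_2-2)<k_1-2$, exactly as you derived, and this is also all that the paper's own proof establishes before asserting the non-strict bound. Integrality of $a_1$ reconciles the two forms only when $(k_2-2)$ does not divide $(k_1-2)$. When the bound is attained, e.g.\ $k_1=8$, $k_2=5$, $a_1=2$, $a_3=a_5=0$ (a legitimate smooth variety from Theorem~\ref{thm:someSmoothOnes}; no smoothness constraint excludes it), one has $-\mathcal{K}_X=(6,3)=3\cdot(a_1,1)$, which lies on the boundary ray of the closed cone $\tau^+$ and hence outside the open ample cone, so $X$ is not Fano although the corollary's inequality holds with equality. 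So the correct condition is $0\leq a_1<\frac{k_1-2}{k_2-2}$; your strict inequality is the right one, and you should not expect to close the gap to the non-strict bound as printed.
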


\begin{corollary}
If $X$ is a smooth projective arrangement-product variety of Picard number two, then the dimension of $X$ is at least $6$.
\end{corollary}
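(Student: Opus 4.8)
The plan is to derive the dimension bound directly from the normal form provided by Theorem~\ref{thm:someSmoothOnes}. By that classification, any smooth projective arrangement-product variety $X$ of Picard number two is isomorphic to one whose Cox ring is $\RRR(X) = \CC[T_{11},\ldots,T_{1k_1},T_{21},\ldots,T_{2k_2}]/\langle g_1,g_2\rangle$ with $\Cl(X)=\ZZ^2$. The crucial input is the numerical constraint built into the shape of the relations: in each factor the index $k_i$ satisfies $k_i\geq 6$ when even and $k_i\geq 5$ when odd, so in all cases $k_i\geq 5$ for $i=1,2$.

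The next step is a dimension count. Since $g_1$ involves only the variables $T_{1j}$ and $g_2$ only the variables $T_{2j}$, the two relations lie in disjoint sets of variables, hence form a regular sequence, and the coordinate space splits as a product. Consequently the affine cone $\bar X = \V(g_1,g_2)$ decomposes as $\V(g_1)\times \V(g_2)$ inside $\CC^{k_1}\times\CC^{k_2}$, each $g_i$ being a single nondegenerate quadric, so $\dim \bar X = (k_1-1)+(k_2-1) = k_1+k_2-2$. Passing from $\bar X$ to $X$ through the characteristic quasitorus, whose dimension equals $\rank(\Cl(X))=2$, yields $\dim X = \dim\bar X - 2 = k_1+k_2-4$.

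Combining the two steps gives $\dim X = k_1+k_2-4 \geq 5+5-4 = 6$, as claimed, with equality exactly when $k_1=k_2=5$. I do not anticipate any genuine obstacle: once Theorem~\ref{thm:someSmoothOnes} is available, the argument reduces to a routine complete-intersection dimension computation. The only point deserving care is recording that the smoothness constraints force $k_i\geq 5$ rather than permitting smaller values, since it is exactly this lower bound on each $k_i$ that rules out dimensions below six.
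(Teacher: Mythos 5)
Your proposal is correct and follows exactly the route the paper intends: the corollary is an immediate consequence of Theorem~\ref{thm:someSmoothOnes}, whose constraints force $k_i\geq 5$, combined with the standard dimension count $\dim X=\dim\bar X-\rank(\Cl(X))=(k_1+k_2-2)-2=k_1+k_2-4\geq 6$. The paper leaves this deduction implicit, and your write-up supplies precisely the missing routine verification, including the sharpness case $k_1=k_2=5$.
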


The rest of this section is dedicated to the proofs of the previous statements.

\begin{lemma}\label{lem:tool2}
Let $X := X_\mathrm{prod}(A,P,\Sigma)$ be a $\QQ$-factorial, quasismooth projective arrangement-product variety of Picard number two with Cox ring
$R_\mathrm{prod}(A, P)$ decomposing into $t > 1$ 
indecomposable rings $R^{(i)}:=R(A^{(i)},P^{(i)})$.
Then the following statements hold: 
\begin{enumerate}
    \item 
    Let $w_{kl}^{(i)}$ denote the weights corresponding to the variables of the ring
    $R^{(i)}$. Then the $w_{kl}^{(i)}$ lie either all in $\tau^-$ or in $\tau^+$.
    \item
    For all $0 \leq \alpha \leq r^{(i)}$ where $1 \leq i \leq t$ the number of variables per term 
    $n_\alpha^{(i)}$ is at most~$2$.
    \item
    We have $m^{(i)} = 0$ for all $1 \leq i \leq t$, and $t = 2$ holds.
    \item
    If $n_\alpha^{(i)} =2$ holds for one index $0 \leq \alpha \leq r^{(i)}$ then 
    the corresponding ring $R^{(i)}$ has exactly one defining relation.
    \item 
    If $n_\alpha^{(1)} = 2 = n_\beta^{(2)}$ holds for two indices $0 \leq \alpha \leq r^{(1)}$ and $0 \leq \beta \leq r^{(2)}$ then we have $$l_{\alpha1}^{(1)} = l_{\alpha2}^{(1)} = l_{\beta1}^{(2)} = l_{\beta2}^{(2)} = 1.$$
\end{enumerate}
\end{lemma}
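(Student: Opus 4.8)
The plan is to transfer the strategy of Lemma~\ref{lem:tool} to the product situation, the crucial new input being that the relations of each $R^{(i)}$ involve only the variables of the $i$-th block, so that $\overline{X}$ decomposes as a product $\overline{X}^{(1)} \times \cdots \times \overline{X}^{(t)} \times \CC^m$ of the affine cones of the factors together with the free part. Consequently every $\overline{X}$-face is a product of faces of the individual blocks, the Jacobian of the defining relations is block-diagonal, and a point of $\overline{X}$ is singular as soon as it is singular in one factor. Two principles will be used repeatedly. First, by Remarks~\ref{rem:divClass} and~\ref{rem:normForm}, $\QQ$-factoriality forces the weight of every ray that is an $\overline{X}$-face to avoid $\tx$, a two-dimensional cone $\gamma_0$ is an $X$-face precisely when $\tx \subseteq Q(\gamma_0)^\circ$, and quasismoothness forbids any $X$-face with singular $\overline{X}$-stratum. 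Second, the stratum $\overline{X}(\gamma_0)$ consists of the points of $\overline{X}$ whose nonvanishing coordinates are exactly those indexed by the rays of $\gamma_0$; in particular, if such a stratum leaves all variables of some factor $R^{(k)}$ equal to zero, the point sits at the cone point of $\overline{X}^{(k)}$, which is singular because the indecomposable ring $R^{(k)}$ has relations of degree at least two.

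For (i) I would argue that a factor whose weights met both $\tp$ and $\tm$ would make its total face $\gamma^{(i)} := \cone(e^{(i)}_{kl}; \text{ all } k,l)$ satisfy $\tx \subseteq Q(\gamma^{(i)})^\circ$, hence be an $X$-face; its stratum then places every other factor at its (singular) cone point, contradicting quasismoothness. Thus each factor is monochromatic, and since by Remark~\ref{rem:divClass} both $\tp$ and $\tm$ carry weights, at least one factor lies in each. For (ii), assuming some block has $n^{(i)}_\alpha \geq 3$, I would pair one of its variables (with weight in, say, $\tp$) with a variable of an opposite, $\tm$-coloured factor to obtain an $X$-face whose stratum leaves only $T^{(i)}_{\alpha 1}$ nonzero inside factor $i$. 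Because $n^{(i)}_\alpha \geq 3$, every partial derivative of the block monomial still contains a vanishing factor, so the whole factor-$i$ Jacobian is zero and the stratum is singular, the desired contradiction. For (iii), a free variable $S_k$ (whose weight lies off $\tx$ by $\QQ$-factoriality) combined with the total face of an opposite-coloured factor, and likewise the total faces of a $\tp$- and a $\tm$-factor whenever $t \geq 3$, produce $X$-faces whose strata strand a further factor at its cone point; this forces $m = 0$ and $t = 2$.

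Parts (iv) and (v) are local gradient computations on the two factors. For (iv), if a block has $n^{(i)}_\alpha = 2$ I would use the $X$-face spanned by one of its variables and a variable of the opposite factor: on the resulting stratum only $T^{(i)}_{\alpha 1}$ survives in factor $i$, so $\delta^{(i)}_\alpha \neq 0$ while $\delta^{(i)}_\gamma = 0$ for all $\gamma \neq \alpha$; the factor-$i$ Jacobian then has rank one, which is deficient unless $R^{(i)}$ has a single relation. For (v), with both factors carrying an $n=2$ block and hence (by (iv)) exactly one relation each, I would test quasismoothness at the $X$-face $\cone(e^{(1)}_{\alpha 1}, e^{(2)}_{\beta 1})$: on its stratum the gradient of $g^{(1)}$ collapses to the single entry $\partial g^{(1)}/\partial T^{(1)}_{\alpha 2}$, which is nonzero only if $l^{(1)}_{\alpha 2} = 1$; the three symmetric faces then force the remaining exponents to equal one as well.

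The main obstacle I anticipate is the bookkeeping of the strata: one must keep firmly in mind that the surviving coordinates of $\overline{X}(\gamma_0)$ are exactly those of $\gamma_0$, verify in each case that the chosen cone is a genuine $\overline{X}$-face (that the relations admit a point with precisely those coordinates nonzero), and then read off the rank of the block-diagonal Jacobian. The most delicate point is (v), where smoothness of a single hypersurface relation must be pinned to the vanishing of exactly one exponent; once the stratum description is applied correctly, the remaining computations are short.
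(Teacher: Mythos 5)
Your overall strategy coincides with the paper's: exploit the product decomposition $\overline{X}=\overline{X}^{(1)}\times\cdots\times\overline{X}^{(t)}\times\CC^m$, note that any stratum setting all variables of an indecomposable factor to zero passes through the singular cone point of that factor (each such factor has at least one relation, with every monomial of degree at least two), and manufacture $X$-faces from oppositely coloured pieces to contradict quasismoothness. Your (i) and (iii) are the paper's arguments almost verbatim, and your (v) is exactly the paper's: there both blocks carry a two-variable term, so the four cones $\cone(e^{(1)}_{\alpha l},e^{(2)}_{\beta l'})$ really are $\overline{X}$-faces and the gradient computation pins all four exponents to $1$.

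There is, however, a concrete defect in your (ii) and (iv): you pair $e^{(i)}_{\alpha 1}$ with a \emph{single} variable of the opposite-coloured factor, and the resulting cone need not be an $\overline{X}$-face. If the opposite factor $R^{(j)}$ has only terms with $n^{(j)}_\beta=1$ --- for instance the Fermat-type ring $\CC[T_1,T_2,T_3]/\langle T_1^2+T_2^2+T_3^2\rangle$, which is indecomposable and is exactly the kind of factor occurring in the paper's classification --- then on the would-be stratum every block-$j$ relation restricts to $v_\beta T_{\beta 1}^{l_{\beta 1}}$, and indecomposability of $A^{(j)}$ forces every column $a_\beta$ to lie in the span of the others, so some relation has $v_\beta\neq 0$ and does not vanish there. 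Hence the stratum is empty, the cone is not an $\overline{X}$-face, and no contradiction arises; your argument simply stalls for such configurations. The paper avoids this by pairing with the \emph{total} face $\gamma^{(j)}$ of the opposite factor (whose stratum keeps all of block $j$ nonzero and is nonempty), i.e.\ by using the cones $\cone(e^{(i)}_{\alpha 1},\gamma^{(j)})$ in (ii) and $\cone(e^{(1)}_{\alpha 1},\gamma^{(2)})$ in (iv) --- precisely the device you yourself employ in (i) and (iii). A second, smaller omission in (ii): since $m=0$ is only established in (iii), the opposite colour may a priori be carried only by free variables $S_k$, in which case one must pair with $e_k$ instead (harmless, since the $S_k$ occur in no relation). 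With these repairs your proof is complete and is essentially the paper's.
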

\begin{proof}
We prove (i). By construction the cone 
$$\gamma^{(i)} := \cone(e_{kl}^{(i)}; \ 0 \leq k \leq r^{(i)}, 1 \leq l \leq n_k^{(i)})$$ 
is an $\overline{X}$-face. As $R_\mathrm{prod}(A,P)$ is decomposable with $t >1$ the corresponding $\overline{X}$-stratum is singular. 
Now assume that not all weights $w_{kl}^{(i)}$ lie in the same cone $\tau^+$ or $\tau^-$. Then $\gamma^{(i)}$ is $X$-relevant which contradicts quasismoothness. 

We turn to (ii). Assume there exists a ring $R^{(i)}$
such that $n_\alpha^{(i)} > 2$ holds for one index $0 \leq \alpha \leq r^{(i)}$. 
Due to (i) we may assume that all weights $w_{kl}^{(i)}$
lie in $\tau^-$. As there have to be at least two weights in $\tau^+$
there exists an index $1 \leq j \leq t$ such that
all weights $w_{kl}^{(j)}$ lie in $\tau^+$.
We obtain an $X$-face
$\cone(e_{\alpha1}^{(i)}, \gamma^{(j)})$ whose corresponding $\overline{X}$-stratum is singular as $n_\alpha^{(i)} > 2$ holds; a contradiction.

We prove (iii).
Assume $m^{(j)} > 0$ holds for at least one $1 \leq j \leq t$. 
Suitably renumbering we may assume $j =1$.
Moreover, with the same arguments as in the proof of Lemma~\ref{lem:tool}~(i)
we may assume that $w_k^{(1)} \in \tau^+$ holds for all $1 \leq k \leq m^{(1)}$. 
Due to Remark~\ref{rem:normForm} there are at least two weights
that lie in $\tau^-$ and using (i) we conclude that there exists
$1 \leq i \leq t$ such that all weights $w_{kl}^{(i)}$ lie in $\tau^-$.
This gives an $X$-face 
$\cone(\gamma^{(i)}, e_k^{(1)})$
with singular stratum as $t> 1$ holds; a contradiction to quasismoothness.

Now assume $t \geq 3$ holds. As each of $\tau^+$ and $\tau^-$ have to contain at least two weights we may assume
that all weights $w_{kl}^{(1)}$ lie in $\tau^-$ and 
all weights $w_{kl}^{(2)}$ lie in $\tau^+$.
This gives an $X$-face 
$
\cone(\gamma^{(1)}, \gamma^{(2)})
$
which is singular as $t \geq 3$ holds. This contradicts quasismoothness.

We turn to (iv).
By renumbering we may assume $i=1$.
Let $n_\alpha^{(1)} = 2$.
Then $\cone(e_{\alpha1}^{(1)}, \gamma^{(2)})$ is an $X$-face whose stratum is singular if there is another defining relation in $R^{(1)}$. This proves the assertion.

We prove (v). Due to (iii) and Remark~\ref{rem:normForm}
we may assume that all weights $w_{kl}^{(1)}$
lie in $\tau^+$ and all weights $w_{kl}^{(2)}$ lie in $\tau^-$.
This implies that the cones
$\cone(e_{\alpha l}^{(1)}, e_{\beta l'}^{(2)})$
with $l,l' \in \left\{1,2\right\}$
are $X$-faces.
As the corresponding $\overline{X}$-strata
have to be smooth, the assertion follows. 
\end{proof}

\begin{proof}[Proof of Theorem~\ref{thm:someSmoothOnes}]
By Construction $R_\mathrm{prod}(A,P)$ admits a decomposition into
indecomposable rings $R^{(i)}:=R(A^{(i)},P^{(i)})$, where $1 \leq i \leq t$ and $t>1$ holds. 
Applying Lemma~\ref{lem:tool2}
we obtain $t=2$
and $m=0$. Moreover, due to Lemma~\ref{lem:tool2}~(i) we may assume that all weights $w_{kl}^{(1)}$ of the variables of the ring $R^{(1)}$ lie in $\tau^-$ and all weights $w_{kl}^{(2)}$ of the variables of the ring $R^{(2)}$ lie in $\tau^+$.
Since  $X$ is projective and $m = 0$ holds, $\Sigma$ contains 
at least one big cone $\sigma = P(\gamma^*)$ with an $X$-face $\gamma$. Recall that in our situation an $\overline{X}$-face
is an $X$-face if it contains at least one 
ray corresponding to a variable of $R^{(1)}$ and another one corresponding to a variable of $R^{(2)}$. We conclude that there exists $0 \leq \alpha \leq r^{(1)}$ and $0 \leq \beta \leq r^{(2)}$ with
$n_\alpha^{(1)} = 2$ and $n_\beta^{(2)} = 2$, where equality holds due to
Lemma~\ref{lem:tool2}~(ii).
Moreover, Lemma~\ref{lem:tool2}~(v)
implies
$$
l_{\alpha1}^{(1)} = l_{\alpha2}^{(1)} = l_{\beta1}^{(2)} = l_{\beta2}^{(2)} = 1
$$
for all 
$0 \leq \alpha \leq r^{(1)}$ and $0 \leq \beta \leq r^{(2)}$ with
$n_\alpha^{(1)} = 2$ and $n_\beta^{(2)} = 2$.
Applying Lemma~\ref{lem:tool2}~(iv) we are left with one homogeneous defining relation $g_1$ for $R^{(1)}$ and another homogeneous defining relation $g_2$ for $R^{(2)}$
whose weights $\deg(g_1) = w^{(1)}$ and $\deg(g_2) = w^{(2)}$ lie in $\tau^-$ and $\tau^+$ respectively.
In particular, by a suitable unimodular coordinate change on 
$\ZZ^2$ we can achieve 
$w^{(1)} = (w_1,0)$ and $w^{(2)} = (0,w_2)$
with positive integers $w_1, w_2$.
We conclude that for any $\alpha$ and $\beta$
with
$n_\alpha^{(1)} = 2$ and $n_\beta^{(2)} = 2$
as above there exists integers $a$ and $b$ such that
$$
w_{\alpha1}^{(1)} = (w_1/2,b), 
\quad
w_{\alpha2}^{(1)} = (w_1/2,-b), 
\quad
w_{\beta1}^{(2)} = (a,  w_2/2).
\quad
w_{\beta1}^{(2)} = (-a,  w_2/2).
$$
As the cones $\cone(e_{\alpha1}^{(1)}, e_{\beta1}^{(2)})$ and $\cone(e_{\alpha2}^{(1)},e_{\beta1}^{(2)})$ are $X$-faces, we conclude 
\begin{equation}\label{equ:detrechnung}
\frac{1}{4}w_1w_2 +ab.
=
\det(w_{\alpha1}^{(1)},w_{\beta1}^{(2)}) = 1 = \det(w_{\alpha2}^{(1)} ,w_{\beta1}^{(2)}) = \frac{1}{4}w_1w_2 +ab.
\end{equation}
This implies $a = 0$ or $b = 0$ and we may assume the latter holds.
Moreover, we obtain $w_1 = w_2 =2$ and homogeneity of the relations implies that
the relations $g_1$ and $g_2$ are quadratic. 
As equation (\ref{equ:detrechnung}) holds for any choice of $\alpha$
and $\beta$ we are left with the following configuration of weights:
$$
Q=
\left[
\begin{array}{ccc|cccc}
1 & \ldots & 1 & a_1 & a_2 & \ldots & a_{k_2}\\
0 & \ldots & 0 & 1 & 1 & \ldots & 1
\end{array}
\right],
$$
We show that there is
at most one term in each of $g_1$ and $g_2$ with only one variable.
Assume there is more than one. Then the divisor class group contains torsion, see~\cite{HaHiWr2019}*{Prop. 7.3}. This is a contradiction, as $\cone(e_{\alpha1}^{(1)}, e_{\beta1}^{(2)})$ is an $X$-face and therefore the divisor class group is isomorphic to $\ZZ^2$.
Now, the conditions on the $a_i$ follow due to homogeneity of the relations and by suitably renumbering. 
In order to complete the proof it is only left to show, that the varieties in this class are indeed smooth.
This follows directly by checking the criterion of Remark~\ref{rem:smoothCrit}.
\end{proof}

\begin{proof}[Proof of Corollary~\ref{cor:somesmoothFanoOnes}]
In order to prove the statement we consider the varieties 
of Theorem~\ref{thm:someSmoothOnes} and check under which condition the anticanonical class lies in the ample cone.
As $\mathcal{R}(X)$ is a complete intersection ring the anticanonical class is given as
$-\mathcal{K}_X = (k_1 - 2, k_2 - 2)$. 
Moreover, due to Lemma~\ref{lem:tool2} we have $\tau^- = \cone((1,0))$ and
$\tau^{+} = \cone((a_1,1), (-a_1,1))$.
We conclude that
$- \mathcal{K}_X$ lies in the ample cone if and only if
$0 \leq a_1 \leq \frac{k_1 -2}{k_2 - 2}$ holds. 
\end{proof}

\bibliographystyle{unsrt}
\bibliography{bibliography}

\end{document}